\newtheorem{thm}{Theorem}[section]
\newtheorem{lemma}[thm]{Lemma}
\newtheorem{prop}[thm]{Proposition}
\theoremstyle{definition}
\newtheorem{rmk}[thm]{Remark}
\newcommand{\po}{\partial\Omega}
\newcommand{\oo}{\overline{\Omega}}
\newcommand{\rn}{\mathbb{R}^n}
\newcommand{\intom}{\int_{\Omega}}
\newcommand{\intpom}{\int_{\partial\Omega}}
\newcommand{\intai}{\int_{A_i}}
\newcommand{\intal}{\int_{A_l}}
\newcommand{\ho}{H^1_0(\Omega)}
\newcommand{\udx}{U_{\delta, \xi}}
\newcommand{\pdxn}{\psi_{\delta, \xi}^0}
\newcommand{\md}{\mathbf{d}}
\newcommand{\mt}{\mathbf{t}}
\newcommand{\mh}{\mathcal{H}(\Omega)}
\newcommand{\mhr}{\mathcal{H}(\rn)}
\newcommand{\wje}{\widetilde{J}_{\ep}}
\newcommand{\pt}{\partial_t}
\newcommand{\pr}{\partial_r}
\newcommand{\pdl}{\partial_{d_l}}
\newcommand{\pxn}{\partial_{x_n}}
\newcommand{\sik}{\sum_{i=1}^k}
\newcommand{\slk}{\sum_{l=1}^k}
\newcommand{\sij}{\sum_{i,j}}
\renewcommand{\(}{\left(}
\renewcommand{\)}{\right)}
\newcommand{\ep}{\epsilon}
\newcommand{\dt}{_{\md, \mt}}
\newcommand{\pe}{^{\perp}}
\newcommand{\la}{\left\langle}
\newcommand{\ra}{\right\rangle}
\newcommand{\tp}{\tilde{p}}
\begin{document}
\title[Boundary towers of layers for supercritical problems]{Boundary towers of layers for some supercritical problems}
\author{Seunghyeok Kim}
\address[Seunghyeok Kim] {Department of Mathematics, Pohang University of Science and Technology, Pohang, Kyungbuk 790-784, Republic of Korea}
\email{shkim0401@gmail.com}

\author{Angela Pistoia}
\address[Angela Pistoia] {Dipartimento SBAI, Universit\`{a} di Roma ``La Sapienza", via Antonio Scarpa 16, 00161 Roma, Italy}
\email{pistoia@dmmm.uniroma1.it}

\begin{abstract}
We consider the supercritical problem%
\begin{equation*}
-\Delta u=\left\vert u\right\vert ^{p-1}u\text{ \ in }\mathcal D,\quad u=0\text{
\ on }\partial\mathcal D,
\end{equation*}
where $\mathcal D$ is a bounded smooth domain in $\mathbb{R}^{N}$ and $p+1$ is smaller
than the $\kappa-$th critical Sobolev exponent $2_{N,\kappa}^{\ast}:=\frac{N-\kappa+2}{N-\kappa-2}$ with
$1\leq \kappa\leq N-3.$ We show that in some suitable torus-like domains $\mathcal D$ there exists an
arbitrary large number of   sign-changing solutions with alternate positive and negative layers which
concentrate at different rates along a $\kappa$-dimensional submanifold  of
$\partial\mathcal D$ as $p$ approaches $2_{N,\kappa}^{\ast}$ from below.
\end{abstract}

\date{\today}
\subjclass[2010]{35J60, 35J20}
\keywords{Nonlinear elliptic boundary value problem; supercritical
exponents; existence sign changing solutions}
\thanks{The first author is partially supported by TJ Part Doctoral Fellowship funded by POSCO}
\maketitle
\numberwithin{equation}{section}

\section{Introduction}
This paper deals with the classical Lane-Emden-Fowler problem
\begin{equation}\label{pb1}
\Delta v+|v|^{p-1}v=0 \quad \text{in } \mathcal D, \qquad v = 0 \quad \text{on } \partial\mathcal D
\end{equation}
where $ \mathcal D$ is a bounded smooth domain in $\mathbb {R}^N,$ $N\ge3$ and $p>1.$
In particular, we are interested in exploring the role of the lower-dimensional Sobolev exponents $2^*_{N,\kappa}$ on the existence and multiplicity of solutions to problem \eqref{pb1}.
For any integer $\kappa$ between $0$ and $N-2$ let us set
\begin{equation}\label{2sk}
2^*_{N,\kappa}:={N-\kappa+2 \over N-\kappa-2}\quad\text{if } 0\le\kappa\le N-3\quad\text{and}\quad 2^*_{N,N-2}:=+\infty.
\end{equation}
If $ 0\le\kappa\le N-3, $  then $2^*_{N,\kappa}+1$ is nothing but the $\kappa-$th critical Sobolev exponent in dimension $N-\kappa.$

It is well known that in the subcritical regime, i.e. $p<2^*_{N ,0}$, the compactness of the Sobolev embedding ensures the existence of at least one positive
solution  and infinitely many sign-changing solutions to \eqref{pb1}.

In the critical case (i.e. $p=2^*_{N,0}$) or in the supercritical case (i.e. $p>2^*_{N,0}$) existence of solutions to problem \eqref{pb1} turns out to be a delicate issue. Indeed, if the domain $\mathcal{D} $ is star shaped Pohozaev's identity \cite{Po} implies that problem \eqref{pb1} has only the trivial solution.

In the critical case, if $\mathcal{D} $ has nontrivial reduced homology with $\mathbb{Z}_2-$coefficients, Bahri-Coron \cite{BC} proved that problem \eqref{pb1} has a positive solution in the critical case. Moreover, it was proved by Ge-Musso-Pistoia \cite{GMP} and Musso-Pistoia \cite{MP2} that if $\mathcal{D} $ has a small hole, problem
\eqref{pb1} has many sign changing solutions, whose number increases as the diameter of the hole decreases.

In the supercritical regime the existence of a nontrivial homology class in $\mathcal{D}$ does not guarantee the existence of a nontrivial solution to \eqref{pb1}. Passaseo in \cite{Pa1,Pa2} exhibited a domain in $\mathbb {R}^N$ homotopically equivalent to the $\kappa-$dimensional sphere in which problem \eqref{pb1}
with $p\ge 2^*_{N ,\kappa}$ has only the trivial solution.
Recently Clapp-Faya-Pistoia \cite{CFP} built  domains in $\mathbb {R}^N$ with a richer topology, namely the cup-length is $\kappa+1$,  in which problem \eqref{pb1} with $p>2^*_{N,\kappa}$ has only the trivial solution.
When  $p=2^*_{N,\kappa}$ the existence of infinitely many positive solutions to \eqref{pb1} was proved by Wei-Yan \cite{WY} for suitable torus-like domains $\mathcal{D}.$

\medskip
It is interesting to study problem \eqref{pb1} in the almost critical case, i.e. $p=2^*_{N,\kappa}\pm\ep,$ where $\ep$ is a small positive parameter.

The peculiarity of the almost critical case when $\kappa=0$ is that problem \eqref{pb1} has solutions which blow-up at one or more simple or multiple points in $\mathcal{D} $ as $\ep$ goes to zero. Indeed, if $p=2^*_{N,0}-\ep,$ positive  and sign-changing solutions  to \eqref{pb1} with different  simple blow-up points were built by Bahri-Li-Rey \cite{BLR} and Bartsch-Micheletti-Pistoia \cite{BMP}, respectively. Moreover, Pistoia-Weth \cite{PW} and Musso-Pistoia \cite{MP} proved that the number of sign-changing solutions  to \eqref{pb1}  with  a multiple blow-up point increases as $\ep$ goes to zero.
On the other hand, if $p=2^*_{N,0}+\ep,$  Ben Ayed-El Mehdi-Grossi-Rey \cite{BEGR} proved that problem \eqref{pb1} does not have any positive solutions with one positive blow-up point, while  Del Pino-Felmer-Musso \cite{DFM} and Pistoia-Rey \cite{PR} found solutions with two or more positive blow-up points provided the domain $\mathcal D$ has a hole.
Up to our knowledge there are no  results about existence of sign-changing solutions in this case. In particular, we quote Ben Ayed-Bouh \cite{BB} who  proved that problem \eqref{pb1} does not have any sign-changing solutions with one positive and one or two negative blow-up points.

\medskip
Having in mind what happens in the almost critical case  when $\kappa=0,$ we wonder if   the same phenomenon occurs for any $1\le\kappa\le N-2.$
More precisely, we ask if for some suitable domains $\mathcal D$ the problem \eqref{pb1} has solutions which blow-up at one or more simple or multiple
$\kappa-$dimensional manifolds in $\mathcal D$ as $p$ approaches the $\kappa-$th Sobolev exponent $2^*_{N,\kappa} $ from below.
A first result in this direction was obtained by Del Pino-Musso-Pacard \cite{DMP}.
If $\kappa=1$ and $p=2^*_{N,1}-\ep,$  they proved that for some domains $\mathcal D$ if $\ep$ is different from an explicit set of values,
problem \eqref{pb1} has a positive solution  which concentrates along a $1-$dimensional submanifold of the boundary of $\mathcal D $ when $\ep$ goes to zero.
Recently, it has been showed that if   $ \kappa\ge 2$ and $p$ approaches from below $2^*_{N,\kappa}$ it is possible to build torus-like  domains $\mathcal D$ in which problem \eqref{pb1} has
 positive solutions which concentrate at a $\kappa-$dimensional submanifold of   $\partial\mathcal D$. The construction was performed in the case $ 1\le \kappa \le N-3 ,$ $p =2^*_{N,\kappa} -\ep $ and $\ep$ goes to zero and in the case $\kappa=N-2$ and $p$ goes to $+\infty$ by Ackermann-Clapp-Pistoia \cite{ACP} and Kim-Pistoia \cite{KP},
  respectively.

As far as it concerns existence of sign-changing solutions, when $ 1\le \kappa \le N-3, $   $p =2^*_{N,\kappa} -\ep $ and $\ep$ is small enough or when  $\kappa=N-2$ and $p$ is large enough, Ackermann-Clapp-Pistoia \cite{ACP} and  Kim-Pistoia \cite{KP}, respectively,   constructed a sign-changing solution  with a positive and a negative layer which concentrate with the same rate along the same $\kappa-$dimensional submanifold of   the boundary of   suitable  torus-like domains $ \mathcal D,$
  as $\ep$ goes to zero. In particular, Kim-Pistoia \cite{KP} proved that when $\kappa=N-2$ the number of sign changing solutions to \eqref{pb1} increases as $p$ goes to $+\infty,$ provided $\mathcal D$ satisfies some symmetric assumptions. Their solutions have an arbitrary number of alternate positive and negative layers which concentrate with the same rate along the same  $(N-2)-$dimensional submanifold of $\partial\mathcal D$ as $p$ goes to $+\infty.$

 \medskip
  In this paper, we build domains $\mathcal D$ such that    the number of sign-changing solutions of
problem \eqref{pb1} when $1\le\kappa\le N-3 $ and $p=2^*_{N,\kappa} -\ep$ increases as $\ep$ goes to zero.
 In particular, for
each set of positive integers $\kappa_{1},\ldots,\kappa_{m}$ with $\kappa:=\kappa_{1}+\cdots
+\kappa_{m}\leq N-3$ we exhibit torus-like domains $\mathcal{D}$ for which the number of sign-changing solutions to problem
\eqref{pb1} with $p=2^*_{N,\kappa} -\ep$ increases as $\ep$ goes to zero.These solutions have an arbitrary large number
of alternate positive and  negative layer which concentrate with different rates along a
$\kappa$-dimensional submanifold $\Gamma_{0}$ of   $\partial\mathcal{D}$
which is diffeomorphic to the product of spheres $\mathbb{S}^{\kappa_{1}}%
\times\cdots\times\mathbb{S}^{\kappa_{m}}.$ This
follows from our main results, which we next state.

Fix $\kappa _{1},\ldots,\kappa _{m}\in\mathbb{N}$ with $\kappa :=\kappa _{1}+\cdots+\kappa _{m}\leq N-3$ and
a bounded smooth domain $\Omega$ in $\mathbb{R}^{N-\kappa }$ such that
\begin{equation}
\overline{\Omega}\subset\{\left(  x_{1},\ldots,x_{m},x^{\prime}\right)
\in\mathbb{R}^{m}\times\mathbb{R}^{N-\kappa -m}:x_{i}>0,\text{ }i=1,\ldots,m\}.
\label{omega}%
\end{equation}
Set
\begin{equation}
\mathcal{D}:=\{(y^{1},\ldots,y^{m},z)\in\mathbb{R}^{\kappa _{1}+1}\times\cdots
\times\mathbb{R}^{\kappa _{m}+1}\times\mathbb{R}^{N-\kappa -m}:\left(  \left\vert
y^{1}\right\vert ,\ldots,\left\vert y^{m}\right\vert ,z\right)  \in\Omega\}.
\label{D}%
\end{equation}
$\mathcal{D}$ is a bounded smooth domain in $\mathbb{R}^{N}$ which is
invariant under the action of the group $\Theta:=O(\kappa _{1}+1)\times\cdots\times
O(\kappa _{m}+1)$ on $\mathbb{R}^{N}$ given by%
\begin{equation*}
(g_{1},\ldots,g_{m})(y^{1},\ldots,y^{m},z):=(g_{1}y^{1},\ldots,g_{m}y^{m},z).
\end{equation*}
for every $g_{i}\in O(\kappa _{i}+1),$ $y^{i}\in\mathbb{R}^{\kappa _{i}+1},$
$z\in\mathbb{R}^{N-\kappa -m}.$ Here, as usual, $O(d)$ denotes the group of linear
isometries of $\mathbb{R}^{d}.$ For $p ={2}_{N,\kappa }^{\ast} -\ep$ we shall
look for $\Theta$-invariant solutions to problem \eqref{pb1}, i.e.
solutions $v$ of the form%
\begin{equation}
v(y^{1},\ldots,y^{m},z)=u(\left\vert y^{1}\right\vert ,\ldots,\left\vert
y^{m}\right\vert ,z). \label{inv}%
\end{equation}
A simple calculation shows that $v$ solves problem \eqref{pb1} if and only
if $u$ solves
\begin{equation*}
-\Delta u-\sum_{i=1}^{m}\frac{\kappa_{i}}{x_{i}}\frac{\partial u}{\partial x_{i}%
}=|u|^{p-1}u\quad\text{in}\ \Omega,\qquad u=0\quad\text{on}\ \partial\Omega.
\end{equation*}
This problem can be rewritten as%
\begin{equation*}
-\text{div}(a(x)\nabla u)=a(x)|u|^{p-1}u\quad\text{in}\ \Omega,\qquad
u=0\quad\text{on}\ \partial\Omega,
\end{equation*}
where $a(x_{1},\ldots,x_{N-\kappa}):=x_{1}^{\kappa_{1}}\cdots x_{m}^{\kappa_{m}}.$ Note that
${2}_{N,\kappa}^{\ast}$ is the critical exponent in dimension $n:=N-\kappa$ which is the
dimension of $\Omega.$

Thus, we are lead to study the more general almost critical problem
\begin{equation}\label{main}
-\text{div}(a(x) \nabla u) = a(x) |u|^{{4 \over n-2}-\ep} u \quad \text{in } \Omega, \qquad u = 0 \quad \text{on } \po
\end{equation}
where $\Omega$ is a smooth bounded domain in $\rn,$  $n \ge 3,$  $\ep > 0$ is a    small parameter   and $a \in C^2(\overline\Omega)$  is strictly positive in $\overline\Omega.$

This is a subcritical problem, so standard variational methods yield one
positive and infinitely many sign changing solutions to problem \eqref{main} for
every $\epsilon\in(0,{\frac{4}{n-2})}$. Our
goal is to construct solutions $u_{\epsilon}$ with an arbitrary large number of alternate positive and negative
bubbles which accumulate with different rates at the same point  $\xi_{0}$ of
$\partial\Omega$ as $\epsilon\rightarrow0.$ They correspond, via (\ref{inv}),
to $\Theta$-invariant solutions $v_{\epsilon}$ of problem \eqref{pb1} with
positive and negative layers which accumulate with different rates along the $\kappa$-dimensional
submanifold
\begin{equation*}
\Gamma _0:=\{(y^{1},\ldots,y^{m},z)\in\mathbb{R}^{\kappa_{1}+1}\times\cdots
\times\mathbb{R}^{\kappa_{m}+1}\times\mathbb{R}^{N-\kappa-m}:\left(  \left\vert
y^{1}\right\vert ,\ldots,\left\vert y^{m}\right\vert ,z\right)  =\xi_{0}\}
\end{equation*}
of the boundary of $\mathcal{D}$ as $\epsilon\rightarrow0.$ Note that
$M_{0}$ is diffeomorphic to $\mathbb{S}^{\kappa_{1}}\times\cdots\times
\mathbb{S}^{\kappa_{m}}$ where $\mathbb{S}^{d}$ is the unit sphere in
$\mathbb{R}^{d+1}.$

We will assume the following conditions.
 \begin{enumerate}
\item[(a1)] There are constants $a_1$ and $a_2$ such that
\[0 < a_1 \le a(x) \le a_2 < +\infty \quad \text{for all } x \in \oo.\]

\item[(a2)] The restriction of $a$ to $\po$ has a critical point $\xi_0 \in \po$ and
\[\partial_{\nu} a(\xi_0) := (\nabla a(\xi_0), \nu(\xi_0)) > 0 \]
where $\nu := \nu(\xi_0)$ is the inward unit normal vector to $\po$ at $\xi_0$.

\item[(a3)] The domain $\Omega$ and the function $a$ are symmetric with respect to the direction given
by $\nu(\xi_0)$, i.e.,
\begin{multline*}
\hspace{30pt}(x, \nu) \nu + (x, \tau_1) \tau_1 + \cdots + (x, \tau_i) \tau_i + \cdots (x, \tau_{n-1}) \tau_{n-1} \in \Omega\\
\Leftrightarrow (x, \nu) \nu + (x, \tau_1) \tau_1 + \cdots - (x, \tau_i) \tau_i + \cdots (x, \tau_{n-1}) \tau_{n-1} \in \Omega
\end{multline*}
and
\begin{multline*}
\hspace{30pt} a\left((x, \nu) \nu + (x, \tau_1) \tau_1 + \cdots + (x, \tau_i) \tau_i + \cdots (x, \tau_{n-1}) \tau_{n-1} \right)\\
= a\left((x, \nu) \nu + (x, \tau_1) \tau_1 + \cdots - (x, \tau_i) \tau_i + \cdots (x, \tau_{n-1}) \tau_{n-1}\right)
\end{multline*}
for $i= 1, \cdots, n-1$. Here $(\cdot, \cdot)$ is the standard inner product in $\rn$ and $\{\tau_1, \cdots, \tau_{n-1}\}$ is an orthonormal basis of the tangent space $T_{\xi_0}\partial \Omega$.
\end{enumerate}

For each $\delta>0,\ \xi\in\mathbb{R}^{n},$\ we consider the standard bubble
\begin{equation*}
U_{\delta,\xi}(x):=[n(n-2)]^{\frac{n-2}{4}}{\frac{\delta^{\frac{n-2}{2}}%
}{\left(  \delta^{2}+|x-\xi|^{2}\right)  ^{\frac{n-2}{2}}}.}%
\end{equation*}

\medskip

We will prove the following result.
\begin{thm}\label{thm_main} Suppose that $(a1)-(a3)$ hold true for $a$ and $\Omega$. Also, assume that $n\ge4.$
For any integer $k$, there exists $\ep_k > 0$ such that for each $0 < \ep < \ep_k$ problem \eqref{main}
has a sign changing solution $u_{\ep}$ which satisfies
\[ u_{\ep} = \sik (-1)^{i+1} U_{\delta_i(\ep), \xi_i(\ep)} + o(1)\quad\text{in}\quad H^1_0(\Omega)\]
where  \[ \ep^{-{n-1+2(i-1) \over n-2}} \delta_i(\ep) \to d_i > 0, \quad \xi_i(\ep) \to \xi_0 \in \po \quad \text{as } \ep \to 0\]
for $i= 1, \cdots, k$.
\end{thm}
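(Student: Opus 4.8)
The plan is to use the Lyapunov--Schmidt finite-dimensional reduction, which is the standard tool for this type of concentration problem. First I would set up the variational framework: problem \eqref{main} has a natural energy functional
\[
J_\ep(u)=\frac12\intom a(x)|\nabla u|^2\,dx-\frac{1}{p+1}\intom a(x)|u|^{p+1}\,dx,\qquad p=\frac{4}{n-2}-\ep+1,
\]
on the weighted space $\ho$ with inner product $\la u,v\ra=\intom a(x)\nabla u\cdot\nabla v$. Because the concentration points $\xi_i(\ep)$ approach the boundary $\po$, the correct ansatz is not the bare bubble $\udx$ but its ``boundary-corrected'' version $\pdxn:=\udx-\varphi_{\delta,\xi}$, where $\varphi_{\delta,\xi}$ is the solution of $-\mathrm{div}(a\nabla \varphi)=0$ in $\Omega$, $\varphi=\udx$ on $\po$ (a projection that kills the boundary trace); one needs the expansion of $\varphi_{\delta,\xi}$ in terms of $\delta$ and $\mathrm{dist}(\xi,\po)$, which is where assumption (a2), i.e. $\partial_\nu a(\xi_0)>0$, will enter. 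The approximate solution is then
\[
W\dt:=\sik(-1)^{i+1}\psi^0_{\delta_i,\xi_i},
\]
with $\delta_i=d_i\,\ep^{\frac{n-1+2(i-1)}{n-2}}$ and $\xi_i=\xi_0+s_i\,\nu$ a point near the boundary; the symmetry hypothesis (a3) is imposed so that we may work in the subspace of functions even in each tangential variable $\tau_j$, which forces the concentration points to lie on the normal line through $\xi_0$ and thereby reduces the reduced problem to the finitely many scalars $\md=(d_1,\dots,d_k)$, $\mt=(s_1,\dots,s_k)$.

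Next I would carry out the reduction itself. Split $\ho$ (restricted to the symmetric subspace) as the direct sum of $K\dt:=\mathrm{span}\{\pt\psi^0_{\delta_i,\xi_i},\ \pr\psi^0_{\delta_i,\xi_i}:i=1,\dots,k\}$ — the tangent space to the manifold of approximate solutions coming from dilation and normal translation — and its orthogonal complement $K\dt^\perp$. Writing $u=W\dt+\phi$ with $\phi\in K\dt^\perp$, the equation $J_\ep'(u)=0$ is projected onto $K\dt^\perp$ and onto $K\dt$. For the first projection I would show that the linearized operator $L\dt:=\Pi^\perp\circ J_\ep''(W\dt)\circ\Pi^\perp$ is invertible on $K\dt^\perp$ with inverse bounded uniformly in $\ep$ (and in $(\md,\mt)$ ranging in a compact set with the $d_i$'s bounded away from $0$); this uses the non-degeneracy of the standard bubble modulo its own dilation/translation kernel, the fact that the $k$ bubbles concentrate at very different rates (so the interaction is small and the blocks essentially decouple), and a by-now-classical argument ruling out loss of coercivity. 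Then the contraction mapping principle gives a unique $\phi\dt\in K\dt^\perp$ solving the infinite-dimensional part, with the bound $\|\phi\dt\|=o(1)$ (in fact $O(\ep^{1+\sigma})$ or so after absorbing logarithmic factors) uniformly in the parameters, and $\phi\dt$ depending smoothly on $(\md,\mt)$.

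It then remains to solve the finite-dimensional problem, i.e. to find $(\md,\mt)$ making the $K\dt$-projection vanish. By the variational structure this is equivalent to finding a critical point of the reduced energy $\mathcal F_\ep(\md,\mt):=J_\ep(W\dt+\phi\dt)$. The core computation is the asymptotic expansion
\[
\mathcal F_\ep(\md,\mt)=k\,a(\xi_0)\,c_0+\ep\left(c_1+\slk\Big[c_2\log d_l+c_3\frac{d_l}{d_{l-1}}\Big]+c_4\slk\frac{s_l}{d_l}\,\partial_\nu a(\xi_0)+\cdots\right)+o(\ep),
\]
where the key terms are: a $\log\delta_i$ term from the subcritical perturbation $\ep$, a positive boundary-repulsion term (from $\varphi_{\delta,\xi}$, proportional to $\partial_\nu a(\xi_0)>0$ by (a2)) pushing each bubble away from $\po$, and alternating-sign interaction terms between consecutive bubbles $\psi^0_{\delta_{i},\xi_i}$ and $\psi^0_{\delta_{i+1},\xi_{i+1}}$ (negative because of the factor $(-1)^{i+1}(-1)^{i+2}$, hence attractive, balancing the others). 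Substituting $\delta_i=d_i\ep^{(n-1+2(i-1))/(n-2)}$ is precisely the scaling that makes all these competing terms of the same order in $\ep$, so that the reduced functional, after dividing by $\ep$, converges $C^1$ to an explicit finite-dimensional function $\Psi(\md,\mt)$ of the form $\sum_l(\alpha_l\log d_l+\beta\, d_l/d_{l-1})+\gamma\sum_l s_l/d_l$ whose critical point structure can be analyzed by hand — one checks it has a critical point (e.g. a max--min or a nondegenerate critical point) with all $d_l>0$. I expect the main obstacle to be exactly this last energy expansion: getting the interaction and boundary-correction terms to the required precision, in particular tracking how the different concentration speeds interact through $\varphi_{\delta,\xi}$ and the pairwise bubble integrals, and verifying that the limiting function $\Psi$ genuinely has a stable critical point with positive $d_l$'s. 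The condition $n\ge4$ enters here for the size estimates of $\varphi_{\delta,\xi}$ and the error terms; once the $C^1$-convergence of $\ep^{-1}\mathcal F_\ep$ to $\Psi$ is established, a standard compactness/degree argument produces the critical point of $\mathcal F_\ep$, and unravelling the reduction yields the solution $u_\ep$ with the stated profile.
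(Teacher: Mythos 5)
Your proposal follows essentially the same Lyapunov--Schmidt route the paper uses: project the standard bubble to kill the boundary trace, take the $k$-term alternating ansatz with $\delta_i\sim\ep^{(n-1+2(i-1))/(n-2)}$, restrict to the $(a3)$-symmetric subspace so that only the dilation and normal-translation directions $\psi^0_i,\psi^n_i$ survive in the kernel, solve the orthogonal part by contraction, and locate a critical point of the reduced energy. Two places deserve comment. First, the parametrization of the centers matters more than your sketch suggests. The paper uses the nested form $\xi_i=\xi_0+\ep t\,\nu+\delta_i s_i\,\nu$ with a single ``global'' depth $t$ and per-bubble offsets $s_i$ (and $s_k=0$); writing $\xi_i=\xi_0+s_i\nu$ with unscaled $s_i$ does not separate the two relevant scales and will make the expansion and the $C^1$-estimate considerably messier. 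Second, and more seriously for the execution, your guessed form of the limiting finite-dimensional functional is off: the consecutive-bubble interaction scales as $(d_{i+1}/d_i)^{(n-2)/2}(1+s_i^2)^{-(n-2)/2}$, not linearly in $d_i/d_{i-1}$, and the boundary term is $(d_1/2t)^{n-2}$ involving \emph{only the outermost bubble} and the depth $t$ (not a sum $\sum s_l/d_l$); the resulting $\Phi$ has a min--max critical point (minimum in $t,d_i$, maximum in $s_i$) only because of these exact powers, so the wrong exponents would change the critical-point analysis. Finally, the $C^1$-estimate in the depth variable $t$ is the genuine technical hurdle: since $\partial_t V$ is of size $\ep\,\partial_{x_n}U_i\sim U_i/\ep$, a naive differentiation of the reduced energy breaks down, and the paper instead substitutes $\partial_{x_n}V$ for $\partial_tV$ and uses a Pohozaev-type boundary identity (the trick from \cite{EMP,MP}) to close the estimate; this, rather than general size estimates of the boundary correction, is also where the restriction $n\ge4$ is actually needed (see the paper's Remark~\ref{rmk_pre}). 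Your sketch does not anticipate this step, and without it the ``standard compactness/degree argument'' you invoke has nothing to act on.
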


\medskip \noindent

The solutions we found resemble the towers  of bubbles with alternating sign which concentrates at a point on the boundary of $\Omega.$
This kind of solutions is typical of almost critical problems (see \cite{DDM,DMPi,GJP,PW,MP}).

 The symmetry of the domain $\Omega$ as stated in $(a2)$
allows to simplify considerably the computations. We believe that the result is true if
we only require that $\xi_0$ is a non degenerate critical point of the restriction of $a$ to the $\partial\Omega.$
Moreover, the restriction on the dimension $n\ge4$ is due to technical reasons as it is explained in   Remark \ref{rmk_pre}.
We also believe that it can be removed but it seems to be necessary to overcome some technical difficulties.
\medskip

Now, we come back to problem \eqref{pb1}.
In the following   theorem  we assume that we are given $\kappa_{1},\ldots,\kappa_{m}%
\in\mathbb{N}$ with $\kappa :=\kappa_{1}+\cdots+\kappa_{m}\leq N-3$ and a bounded smooth
domain $\Omega$ in $\mathbb{R}^{N-\kappa}$ which satisfies (\ref{omega}). We set
$a(x_{1},\ldots,x_{N-\kappa}):=x_{1}^{ \kappa_{1}}\cdots x_{m}^{\kappa_{m}},$ $\mathcal{D}$
as in (\ref{D}), $p =2_{N,\kappa}^{\ast}-\epsilon,$ $\Theta:=O(\kappa_{1}+1)\times
\cdots\times O(\kappa_{m}+1)$ and
\begin{equation*}
\widetilde{U}_{\delta,\xi}(y^{1},\ldots,y^{m},z):=U_{\delta,\xi}(\left\vert
y^{1}\right\vert ,\ldots,\left\vert y^{m}\right\vert ,z)
\end{equation*}
for $\delta>0,\ \xi\in\mathbb{R}^{N-\kappa}.$

\begin{thm}
\label{main1} Assume $n = N-\kappa\ge 4.$ Then
for any integer $k$ there exists $\epsilon_{k}>0$ such that  for  any $\epsilon\in(0,\epsilon_{0}),$ problem
\eqref{pb1} has a $\Theta$-invariant solution $v_{\epsilon}$ which
satisfies
\begin{equation*}
v_{\epsilon}(x)=\sum\limits_{i=1}^{k}(-1)^{i+1}\widetilde
{U}_{\delta_ i(\epsilon),\xi_ i(\epsilon)}(x)+o(1)\qquad\text{in
}H^1_0(\mathcal{D}),
\end{equation*}
with
\begin{equation*}
\epsilon^{-{n-1+2(i-1) \over n-2}}\delta_i(\epsilon)\rightarrow d_{i}%
>0\qquad\text{and}\qquad\xi_i(\epsilon)\rightarrow\xi_{0}\in
\partial\Omega,
\end{equation*}
for each $i=1,\dots, k$ as $\epsilon\rightarrow0.$
\end{thm}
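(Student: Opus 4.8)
The plan is to deduce Theorem~\ref{main1} directly from Theorem~\ref{thm_main} by the reduction to the singular elliptic problem already described in the introduction. The passage from \eqref{pb1} to \eqref{main} is variational: a $\Theta$-invariant function $v$ on $\mathcal{D}$ corresponds, via \eqref{inv}, to a function $u$ on $\Omega \subset \mathbb{R}^{n}$ with $n = N - \kappa$, and $v$ solves \eqref{pb1} if and only if $u$ solves
\begin{equation*}
-\text{div}(a(x)\nabla u) = a(x)|u|^{p-1}u \quad \text{in } \Omega, \qquad u = 0 \quad \text{on } \po,
\end{equation*}
with the weight $a(x_1,\ldots,x_n) = x_1^{\kappa_1}\cdots x_m^{\kappa_m}$. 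Since $\overline{\Omega}$ is contained in the region where every $x_i > 0$, the weight $a$ is strictly positive and $C^2$ (indeed smooth) on $\overline{\Omega}$, so assumption (a1) holds automatically. Moreover $p = 2^*_{N,\kappa} - \ep = \frac{n+2}{n-2} - \ep$, i.e. $p - 1 = \frac{4}{n-2} - \ep$, so this is exactly problem \eqref{main}.

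First I would record that, given the symmetric torus-like domain $\mathcal{D}$ we wish to construct, it suffices to choose the base domain $\Omega \subset \mathbb{R}^{n}$ together with a boundary point $\xi_0 \in \po$ so that (a2) and (a3) hold for this particular $a$. Concretely, one picks $\xi_0 = (\xi_0^1,\ldots,\xi_0^m,\xi_0') \in \po$ with all coordinates $\xi_0^i > 0$; a short computation shows that $\nabla a(\xi_0) = a(\xi_0)\big(\tfrac{\kappa_1}{\xi_0^1},\ldots,\tfrac{\kappa_m}{\xi_0^m},0,\ldots,0\big)$, which is a nonzero vector. One then selects $\Omega$ to be symmetric about the hyperplane through $\xi_0$ orthogonal to $\nu(\xi_0)$ in each tangential direction, arranged so that $\xi_0$ is a critical point of $a|_{\po}$ (for instance by choosing $\Omega$ near $\xi_0$ to coincide with a half-ball whose flat face is tangent to the level set of $a$, or more robustly by a standard perturbation/symmetrization argument) and so that $\nabla a(\xi_0)$ points into $\Omega$, which gives $\partial_\nu a(\xi_0) = (\nabla a(\xi_0),\nu(\xi_0)) > 0$. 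This verifies (a2)--(a3); the existence of such a domain is where one must be a little careful, but it is a soft construction and not the analytic core.

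Then I would apply Theorem~\ref{thm_main} to this $\Omega$, $a$, and $\xi_0$: for $n = N - \kappa \ge 4$ and any integer $k$ there is $\ep_k > 0$ such that for $0 < \ep < \ep_k$ problem \eqref{main} has a sign-changing solution $u_\ep = \sum_{i=1}^k (-1)^{i+1} U_{\delta_i(\ep),\xi_i(\ep)} + o(1)$ in $H^1_0(\Omega)$ with $\ep^{-\frac{n-1+2(i-1)}{n-2}}\delta_i(\ep) \to d_i > 0$ and $\xi_i(\ep) \to \xi_0$. Setting $v_\ep(y^1,\ldots,y^m,z) := u_\ep(|y^1|,\ldots,|y^m|,z)$ yields a $\Theta$-invariant solution of \eqref{pb1}. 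It remains to check that the $H^1_0(\Omega)$-expansion transfers to an $H^1_0(\mathcal{D})$-expansion for $v_\ep$ and $\widetilde{U}_{\delta_i,\xi_i}$: this follows because the coarea-type formula gives $\|w\|_{H^1_0(\mathcal{D})}^2 = c\int_\Omega a(x)|\nabla u|^2\,dx$ for the lift $w$ of $u$, and since $0 < a_1 \le a \le a_2$ on $\overline{\Omega}$, the weighted norm $\big(\int_\Omega a|\nabla \cdot|^2\big)^{1/2}$ is equivalent to the standard $H^1_0(\Omega)$-norm; hence $o(1)$ in $H^1_0(\Omega)$ lifts to $o(1)$ in $H^1_0(\mathcal{D})$, and $\widetilde{U}_{\delta_i,\xi_i}$ is by definition the lift of $U_{\delta_i,\xi_i}$. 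The concentration along $\Gamma_0$ as described then follows since $\xi_i(\ep) \to \xi_0$.

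The only genuine subtlety — and the step I expect to require the most care — is the domain construction verifying (a2)--(a3) simultaneously for the fixed weight $a(x) = x_1^{\kappa_1}\cdots x_m^{\kappa_m}$: one needs $\Omega$ to be symmetric in all $n-1$ tangential directions at $\xi_0$, to have $\xi_0$ a critical point of $a|_{\po}$, and to satisfy the sign condition $\partial_\nu a(\xi_0) > 0$, all at once. Everything else is the bookkeeping of the equivalence between $\Theta$-invariant solutions of \eqref{pb1} and solutions of \eqref{main}, together with the norm-equivalence that lets Theorem~\ref{thm_main} be quoted verbatim. I would therefore devote the bulk of the write-up to exhibiting an explicit admissible $\Omega$ (or a one-parameter family), and dispose of the rest in a couple of lines.
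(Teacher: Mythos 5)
Your proposal is correct and follows exactly the route the paper takes: Theorem~\ref{main1} is deduced from Theorem~\ref{thm_main} via the one-to-one correspondence between $\Theta$-invariant solutions of \eqref{pb1} on $\mathcal{D}$ and solutions of the weighted problem \eqref{main} on $\Omega$ with $a(x)=x_1^{\kappa_1}\cdots x_m^{\kappa_m}$, together with the equivalence of the weighted $H^1_0(\Omega)$ norm and the $H^1_0(\mathcal{D})$ norm on invariant functions; the paper simply declares this step ``immediate,'' implicitly carrying over assumptions (a2)--(a3) on $(\Omega,a,\xi_0)$ rather than pausing, as you do, to observe that an admissible $\Omega$ must be exhibited.
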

The solutions we found resemble    the towers  of layers with alternating sign which concentrate  at a $\kappa-$dimensional submanifold of the boundary of $\mathcal D.$
This result extends the one obtained  by Pistoia-Weth \cite{PW} and Musso-Pistoia \cite{MP} when $\kappa=0$ to higher $\kappa$'s.
 Moreover, we stress the fact  that the  profile of our solutions    is different from the one   found by Ackermann-Clapp-Pistoia \cite{ACP} and  Kim-Pistoia \cite{KP}.
 Indeed, their solutions look like a cluster of layers (i.e. all the layers concentrate at the same speed), while our solution look like a tower of layers
(i.e. one layer   concentrates faster than the previous one).

\medskip
It is interesting to prove that this kind of solutions also exists in the setting of \cite{DMP}. Indeed, we conjecture that if $\Gamma$ is a nondegenerate
geodesic of the boundary of $\mathcal D$ with inner normal curvature it is possible to build towers of sign-changing solutions whose $1-$dimensional layers
concentrate at $\Gamma$ as $p$ approaches the first Sobolev critical exponent  $2^*_{N,1}$ from below (up to a subsequence of values).

\medskip
By the previous discussion Theorems \ref{main1}  follows
immediately from Theorems \ref{thm_main}. The proof of Theorem
\ref{main} relies on a very well known Ljapunov-Schmidt
reduction. We omit many details on the finite dimensional reduction because
they can be found, up to some minor modifications, in the literature. We only
compute what cannot be deduced from known results. In Section \ref{var-set} we
write the approximate solution, we sketch the proof of the Ljapunov-Schmidt
procedure and we prove Theorem  \ref{main1}. In Section \ref{sec_error} we compute
the rate of the error term, while in Section \ref{sec_expansion_1} and in Section \ref{sec_expansion_2}
we give the $C^0-$estimate and the $C^1-$estimate of  the reduced
energy, respectively. In Appendix A we give some important estimates which are not available in the literature.

\medskip \noindent
\textbf{Notations.}

\noindent - For the sake of convenience, we assume that $\xi_0 = 0 \in \rn$, $\tau_i = e_i$ for $i = 1, \cdots, n-1$ and $\nu = e_n$ where $\{e_1, \cdots, e_n\}$ denotes the standard basis in $\rn$. Thus assumption (a3) reads as $\Omega$ is symmetric with respect to the $x_n$-axis and $a(x_1, \cdots, x_i, \cdots, x_n) = a(x_1, \cdots, -x_i, \cdots, x_n)$ for $i = 1, \cdots, n-1$.

\noindent -   $D^{1,2}(\rn)$ is the space of measurable and
weakly differentiable functions the $L^2$-norms of whose gradient are finite.

\noindent - $\mathcal{D}(\Omega)$ is the space of smooth functions whose supports are compactly contained
in $\Omega$ and $\ho$ is the completion of $\mathcal{D}(\Omega)$  with respect to the norm
$\|u\| = \la u,u \ra^{1 \over 2} = \(\intom a |\nabla u|^2\)^{1 \over 2}$.
By virtue of (a1), this norm is equivalent to the usual one.

\noindent - $\mh$ is a subspace of $\ho$ defined by
\[\mh = \{u \in \ho  : u(x_1, \cdots, x_i, \cdots, x_n) = u(x_1, \cdots, -x_i, \cdots, x_n) \text{ for each } i= 1, \cdots, n-1\}.\]
Also, $\mhr$ is a subspace of $D^{1,2}(\rn)$ defined similarly.

\noindent - For any $x \in \rn$ and $r > 0$, $B(x, r)$ is the open ball in $\rn$ of radius $r$ centered at $x$.

\noindent - $|B^n| = \pi^{n/2}/\ \Gamma(n/2+1)$ and $|S^{n-1}| = (2 \pi^{n/2})/\ \Gamma(n/2)$ denotes the Lebesgue measure of the $n$-dimensional unit ball and $(n-1)$-dimensional unit sphere, respectively.

\noindent - We will use big $O$ and small $o$ notations to describe the limit behavior of a certain
quantity as $\ep \to 0$.

\noindent - $C > 0$ is a generic constant that may vary from line to line.

\section{Preliminaries and scheme of the proof of Theorem \ref{thm_main}} \label{var-set}
\subsection{An approximation for the solution}
Set $\alpha_n = [n(n-2)]^{n-2 \over 4}$ and let
\begin{equation}\label{instanton}
\udx(x) := \alpha_n {\delta^{n-2 \over 2} \over {(\delta^2 + |x -\xi|^2)}^{n-2 \over 2}} \quad \text{ for } \delta > 0, \ \xi = (\xi_1, \cdots, \xi_{n-1}, 0) \in \rn,
\end{equation}
which are positive solutions to the problem
\begin{equation}\label{limit_eq}
- \Delta u = u^{n+2 \over n-2} \quad \text{in } \rn, \quad u \in \mhr.
\end{equation}
Define also
\begin{equation}\label{pdxn}
\pdxn(x) := {\partial \udx \over \partial \delta} = \alpha_n \({n-2 \over 2}\) \delta^{n-4 \over 2} {|x - \xi|^2 -\delta^2 \over (\delta^2 + |x - \xi|^2)^{n \over 2}}
\end{equation}
and
\begin{equation}\label{pdxj}
\psi_{\delta, \xi}^i(x) := {\partial \udx \over \partial \xi_i} = \alpha_n (n-2) \delta^{n-2 \over 2} {(x - \xi)_i \over (\delta^2 + |x - \xi|^2)^{n \over 2}},\quad
 i=1,\dots,n,
\end{equation}
where $(x-\xi)_i$ is the $i$-th coordinate of $x - \xi \in \rn$.
Recall that the space spanned by $\pdxn,\psi_{\delta, \xi}^1,\dots,\psi_{\delta, \xi}^n$ is the set of bounded solutions to the linearized problem of \eqref{limit_eq} at $\udx$
\begin{equation}\label{eq_of_psi}
-\Delta \psi = \({n+2 \over n-2}\) \cdot \udx^{4 \over n-2} \psi \quad \text{in } \rn, \quad \psi \in D^{1,2}(\rn).
\end{equation}
In particular, the set of bounded solutions to the linear equation \eqref{eq_of_psi} in the space $\mhr$ is generated by the only two functions $\psi_{\delta, \xi}^0$
and $\psi_{\delta, \xi}^n.$

\medskip
Let $PW$ be the projection of the function $W \in D^{1,2}(\rn)$ onto $\ho$, that is,
\begin{equation}\label{proj}
\Delta PW = \Delta W \quad \text{ in } \Omega, \qquad PW = 0 \quad \text{ on } \po,
\end{equation}
and $k$ a fixed integer. (See Appendix \ref{appendix_W_PW} for estimation of $P\udx$ in terms of $\udx$.)  We look for a solution to problem \eqref{main} of the form
\[ u = \sik (-1)^{i+1}PU_{\delta_i, \xi_i} + \phi \in \mh\]
where the concentration parameters satisfy
\begin{equation}\label{conc_para}
\delta_i = \ep^{n-1+2(i-1) \over n-2} d_i \quad \text{with} \quad d_i > 0,
\end{equation}
the concentration points satisfy
\begin{equation}\label{conc_pt}
\xi_i = (\xi_0 + \ep t \nu(\xi_0)) + \delta_i s_i \nu(\xi_0) \quad \text{ with } t > 0 \text{ and } s_i \in \mathbb{R}, \ s_k = 0
\end{equation}
and $\|\phi\|$ is sufficiently small.

For simplicity we write $\md := (d_1, \cdots, d_k) \in (0, +\infty)^k$, $\mt := (t, s_1, \cdots, s_{k-1}) \in (0, +\infty) \times \mathbb{R}^{k-1} $,
$U_i =  U_{\delta_i, \xi_i}$ and
\begin{equation}\label{def_V}
V^{\ep}\dt = V\dt = \sik (-1)^{i+1} PU_i \in \mh.
\end{equation}
Also, we define the admissible set $\Lambda$ by
\[\Lambda = \left\{(\md, \mt): \md \in (0, +\infty)^k, \ \mt \in (0, +\infty) \times \mathbb{R}^{k-1} \right\}. \]

\subsection{Scheme of the proof of Theorem \ref{thm_main}}
First, we rewrite problem \eqref{main}. Let $i^*: L^{2n \over n+2}(\Omega) \to \ho$ be the
adjoint operator to the embedding $i : \ho \hookrightarrow L^{2n \over n-2}(\Omega)$,
i.e., $i^*(v) = u$ if and only if $\la u, \phi \ra = \intom av\phi$ for all $\phi \in \mathcal{D}(\Omega)$,
or $-\text{div}(a(x) \nabla u) = av$ in $\Omega$ and $u = 0$ on $\partial \Omega$.
Therefore \eqref{main} is equivalent to
\begin{equation}\label{main2}
u = i^*\(|u|^{p-1-\ep} u\), \quad u \in \ho \quad \text{where } p := {n+2 \over n-2}.
\end{equation}

For the sake of simplicity, we write $\psi_i^j = \psi_{\delta_i, \xi_i}^j$ with $\delta_i$ and $\xi_i$ defined in \eqref{conc_para}
and \eqref{conc_pt}. We introduce the spaces
\[K\dt = \text{span}\{P\psi_i^j : i = 1, \cdots, k, \ j = 0, n \}, \]
\begin{equation}\label{K_perp}
K\dt\pe = \left\{\phi \in \mh: \la \phi, P\psi_i^j \ra = 0 \text{ for } i = 1, \cdots, k, \ j = 0, n \right\},
\end{equation}
and the projection operators $\Pi\dt: \mh \to K\dt$ and $\Pi\dt\pe = Id_{\mh} - \Pi\dt: \mh \to K\dt\pe$.

As usual, we will solve problem \eqref{main2} by finding parameters $(\md, \mt) \in \Lambda$ and
a function $\phi \in K\dt\pe$ such that
\begin{equation}\label{es1}
\Pi\dt\pe\(V\dt + \phi - i^*\(|V\dt + \phi|^{p-1-\ep} (V\dt + \phi)\)\) = 0
\end{equation}
and
\begin{equation}\label{es2}
\Pi\dt\(V\dt + \phi - i^*\(|V\dt + \phi|^{p-1-\ep} (V\dt + \phi)\)\) = 0.
\end{equation}

\medskip
The first step is to solve equation \eqref{es1}. More precisely,  if  $\ep  $ is small enough for any fixed $(\md, \mt) \in \Lambda$, we will find a function
$\phi \in K\dt\pe$ such that \eqref{es1} holds.

First of all we define the linear operator $L\dt : K\dt\pe \to K\dt\pe$ by
\begin{equation}\label{def_L}
L\dt\phi =  \phi - \(p- \ep\) \cdot \Pi\dt\pe i^*\(|V\dt|^{p-1-\ep}\phi\).
\end{equation}

Arguing as in \cite[Lemma 3.1]{MP} and  using Lemma \ref{lemma_pre_5} and Lemma \ref{lemma_pre_4},  we   prove that it is invertible.

\begin{prop}\label{prop_linear}
For any compact subset $\Lambda_0$ of $\Lambda$, there exist $\ep_0 > 0$ and $c > 0$ such that for each
$\ep \in (0, \ep_0)$ and $(\md, \mt) \in \Lambda_0$ the operator $L\dt$ satisfies
\[\|L\dt \phi\| \ge c \|\phi\| \quad \text{for all } \phi \in K\dt\pe.\]
\end{prop}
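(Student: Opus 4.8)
The plan is to establish the coercivity estimate for $L\dt$ by a contradiction-compactness argument, following the now-standard template (as in \cite[Lemma 3.1]{MP}), with the main new ingredients being the weighted inner product $\la \cdot, \cdot \ra$ carrying the coefficient $a$ and the tower scaling \eqref{conc_para}. Suppose the conclusion fails: then there are sequences $\ep_h \to 0$, $(\md_h, \mt_h) \in \Lambda_0$ converging to some $(\md_\infty, \mt_\infty) \in \Lambda_0$ (compactness), and $\phi_h \in K_{\md_h, \mt_h}^\perp$ with $\|\phi_h\| = 1$ but $\|L_{\md_h, \mt_h} \phi_h\| \to 0$. Write $w_h = L_{\md_h, \mt_h}\phi_h = \phi_h - (p - \ep_h)\Pi_{\md_h,\mt_h}^\perp i^*\big(|V_{\md_h,\mt_h}|^{p-1-\ep_h}\phi_h\big)$, so $\|w_h\| \to 0$.

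First I would test this identity against $\phi_h$ itself. Since $\Pi\dt\pe$ is the orthogonal projection onto $K\dt\pe$ and $\phi_h \in K\dt\pe$, one gets $\la \Pi\dt\pe \zeta, \phi_h\ra = \la \zeta, \phi_h \ra$, so
\begin{equation*}
\|\phi_h\|^2 = \la w_h, \phi_h \ra + (p - \ep_h) \intom a\, |V_{\md_h,\mt_h}|^{p-1-\ep_h} \phi_h^2 .
\end{equation*}
Hence it suffices to show $\intom a\, |V_{\md_h,\mt_h}|^{p-1-\ep_h}\phi_h^2 \to 0$, since then $1 = \|\phi_h\|^2 \to 0$, a contradiction. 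The standard route is a blow-up analysis around each bubble: for each fixed $i \in \{1, \dots, k\}$, rescale by setting $\widetilde\phi_h^i(y) := \delta_{i,h}^{(n-2)/2}\, \phi_h(\delta_{i,h} y + \xi_{i,h})$. By the change of variables and the uniform bound $\|\phi_h\| = 1$, the functions $\widetilde\phi_h^i$ are bounded in $D^{1,2}(\rn)$ (here one uses that $a$ is bounded above and below, (a1), so the weighted norm is equivalent to the Dirichlet norm, and $a$ is continuous so $a(\delta_{i,h}y + \xi_{i,h}) \to a(\xi_0)$ locally uniformly), so up to a subsequence $\widetilde\phi_h^i \rightharpoonup \widetilde\phi^i$ weakly in $D^{1,2}(\rn)$ and in $\mhr$. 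Passing to the limit in the rescaled version of the equation $\la w_h, \cdot \ra$-identity tested against rescaled test functions, $\widetilde\phi^i$ solves the linearized equation \eqref{eq_of_psi} around $U_{1,0}$, hence lies in the span of $\pdxn[1,0]$ and $\psi_{1,0}^n$ (the only elements of the kernel in $\mhr$, by the discussion after \eqref{eq_of_psi}). On the other hand, the orthogonality conditions $\la \phi_h, P\psi_{i,h}^j \ra = 0$ for $j = 0, n$ pass to the limit — using the estimates $\|P\psi_i^j - \psi_i^j\| \to 0$ from Appendix \ref{appendix_W_PW} and Lemma \ref{lemma_pre_5}, Lemma \ref{lemma_pre_4}, which control the interaction of bubbles at different rates — and force $\la \widetilde\phi^i, \psi_{1,0}^0 \ra = \la \widetilde\phi^i, \psi_{1,0}^n \ra = 0$, so $\widetilde\phi^i \equiv 0$. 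Thus $\widetilde\phi_h^i \rightharpoonup 0$ for every $i$; combined with $\widetilde\phi_h^i \to 0$ strongly in $L^{2n/(n-2)}_{loc}(\rn)$ (Rellich) this yields, after splitting the integral $\intom a|V_{\md_h,\mt_h}|^{p-1-\ep_h}\phi_h^2$ into a sum of contributions near each $\xi_{i,h}$ (where $|V|^{p-1-\ep} \approx U_i^{p-1-\ep}$ in the rescaled picture — invoking Lemma \ref{lemma_pre_5} to discard cross terms between layers with different scales) plus a negligible remainder, that $\intom a |V_{\md_h,\mt_h}|^{p-1-\ep_h}\phi_h^2 \to 0$.

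The main obstacle I anticipate is handling the interactions between the $k$ bubbles, which here concentrate at genuinely different rates $\delta_i = \ep^{(n-1+2(i-1))/(n-2)} d_i$ rather than a common rate: one must verify that when the integral is rescaled around the $i$-th bubble, the contributions of the other projected bubbles $PU_l$, $l \ne i$, as well as the cross terms in $|V|^{p-1-\ep}$, vanish in the limit. This is where the precise pointwise and integral estimates of Lemma \ref{lemma_pre_5} and Lemma \ref{lemma_pre_4} (together with the Appendix A estimates on $P\udx - \udx$ near the boundary) are essential: the ratios $\delta_i / \delta_l \to 0$ or $\infty$ make these interaction terms of strictly lower order. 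A secondary technical point is that everything happens near the boundary $\po$, so one works with the half-space limit profile; but since the concentration is at rate $\delta_i \ll \ep$, the rescaled domains $\delta_{i,h}^{-1}(\Omega - \xi_{i,h})$ exhaust all of $\rn$ (the bubbles sit deep inside $\Omega$ relative to their own scale), so the limiting problem is genuinely on $\rn$ and the nondegeneracy statement quoted after \eqref{eq_of_psi} applies directly. Once the contradiction is reached, the estimate holds with some $c > 0$ and $\ep_0 > 0$ uniform over the compact set $\Lambda_0$, as claimed.
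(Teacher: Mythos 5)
Your argument follows the same contradiction--compactness blow-up scheme that the paper invokes by citing \cite[Lemma 3.1]{MP}, together with the same auxiliary ingredients (Lemma \ref{lemma_pre_5} and Lemma \ref{lemma_pre_4}); and you correctly spot the two structural points specific to this setting, namely that working in $\mhr$ reduces the kernel of the linearized operator to $\mathrm{span}\{\psi_{1,0}^0,\psi_{1,0}^n\}$, and that $\delta_i\ll \ep$ forces the rescaled domains to exhaust $\rn$ so that the limit problem is genuinely on the whole space.

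Two technical slips should be tightened. First, the assertion ``$\|P\psi_i^j-\psi_i^j\|\to 0$'' is not literally true: since $\|\psi_i^j\|\sim \delta_i^{-1}$ the absolute projection error can in fact grow as $\ep\to 0$; what Lemma \ref{lemma_pre_5} provides --- and what is actually needed once one rescales the orthogonality $\la\phi_h, P\psi_{i,h}^j\ra=0$ by $\delta_i$ --- is the \emph{relative} smallness, i.e.\ $\delta_i\|P\psi_i^j-\psi_i^j\|=o(1)$ against $\delta_i\|\psi_i^j\|\sim 1$. Second, Rellich only gives $\widetilde\phi_h^i\to 0$ strongly in $L^q_{loc}(\rn)$ for subcritical $q<\frac{2n}{n-2}$, not in $L^{2n/(n-2)}_{loc}$; to conclude $\int U_{1,0}^{p-1-\ep_h}(\widetilde\phi_h^i)^2\to 0$ you should combine $L^2_{loc}$ convergence on a ball $B_R$ with the $L^{n/2}$-decay of $U_{1,0}^{p-1}$ and a H\"older estimate to handle the region $|y|>R$. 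Both points are routine to repair and do not affect the strategy, which matches the paper's.
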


Secondly, in Section \ref{sec_error} we estimate the error term
$$R\dt := \Pi\dt\pe \(i^*\(|V\dt|^{p-1-\ep} V\dt\) - V\dt\).$$
\begin{lemma}\label{lemma_R}
It holds true that
\[\|R\dt\| = O\(\ep^{{1 \over 2} \cdot {n+6 \over n+2}}\) = o\(\sqrt{\ep}\).\]
\end{lemma}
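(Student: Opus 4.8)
The plan is to estimate the error term by writing out the defining expression and using the equation satisfied by each $PU_i$. Since $-\Delta PU_i = -\Delta U_i = U_i^p$ in $\Omega$ (recall $p = {n+2 \over n-2}$), one has $PU_i = i^*(a^{-1} U_i^p \cdot a) = i^*(U_i^p)$ only up to the weight $a$; more precisely, by definition of $i^*$ and the projection \eqref{proj}, $\la PU_i - i^*(U_i^p/a \cdot a), \phi\ra$ must be tracked carefully, so that actually $R\dt = \Pi\dt\pe \( i^*\( |V\dt|^{p-1-\ep}V\dt - \sik (-1)^{i+1} a^{-1}\Delta(PU_i)/(-1) \) \)$. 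The cleaner route is: since $\|i^*\|$ is bounded as an operator $L^{2n \over n+2}(\Omega) \to \ho$, it suffices to bound
\[ \left\| |V\dt|^{p-1-\ep} V\dt - \sik (-1)^{i+1} U_i^p + \(\text{terms from } a \text{ and from } PU_i - U_i\) \right\|_{L^{2n \over n+2}(\Omega)}. \]
Thus the proof reduces to an $L^{2n/(n+2)}$ estimate of three contributions: (i) the nonlinear interaction error $|V\dt|^{p-1-\ep} V\dt - \sik (-1)^{i+1} U_i^p$; (ii) the effect of the exponent defect, i.e. replacing $U_i^p$ by $U_i^{p-\ep}$; and (iii) the difference $PU_i - U_i$ (the boundary correction $\varphi_i := U_i - PU_i$, harmonic in $\Omega$), together with the variable coefficient $a$.

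The key steps, in order, are as follows. First I would split $\Omega$ into the regions $A_i$ where $U_i$ dominates (annuli at scale $\sqrt{\delta_i \delta_{i+1}}$ around $\xi_i$), as is standard for towers, and estimate each contribution region by region. For (i), on $A_l$ one writes $V\dt = (-1)^{l+1} U_l + \sum_{i \ne l}(-1)^{i+1} PU_i$ and uses the elementary inequality $\big| |a+b|^{p-1}(a+b) - |a|^{p-1}a - |b|^{p-1}b \big| \lesssim |a|^{p-1}|b| + |b|^{p-1}|a|$ (valid since $p = {n+2 \over n-2} \le 2$ requires care — for $n \ge 6$ one has $p \le 2$ and the bound $|a|^{p-1}|b|$ suffices; for $n = 4, 5$ one gets $|a||b|^{p-1}$ type terms too, which is exactly where the restriction $n \ge 4$ and the technical difficulties alluded to in Remark \ref{rmk_pre} enter). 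For (ii), $U_i^{p-\ep} - U_i^p = U_i^p(U_i^{-\ep} - 1)$, and since $U_i \sim \delta_i^{-(n-2)/2}$ at its core while $\delta_i$ is a small power of $\ep$, one gets $|U_i^{-\ep}-1| \lesssim \ep |\log U_i| \lesssim \ep |\log \ep|$ on the bulk, contributing a factor $\ep|\log\ep|$ times $\|U_i^p\|_{L^{2n/(n+2)}} = O(1)$ — this is $o(\sqrt\ep)$. For (iii), one invokes the estimates of $P\udx$ in terms of $\udx$ from Appendix \ref{appendix_W_PW}: the boundary correction $\varphi_i$ is of order $\delta_i^{(n-2)/2}$ in the relevant norm, which combined with $a \in C^2(\oo)$ gives controllable contributions.

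The main obstacle — and the step that actually pins down the exponent ${1 \over 2}\cdot{n+6 \over n+2}$ — is the interaction estimate (i) between consecutive bubbles $U_l$ and $PU_{l+1}$ (and $U_l$ and $PU_{l-1}$), evaluated in the $L^{2n/(n+2)}$ norm. One must compute $\big\||U_l|^{p-1} |U_{l+1}|\big\|_{L^{2n/(n+2)}(A_l)}$ and the symmetric term, using the explicit profiles of the bubbles and the relation \eqref{conc_para}, namely $\delta_{i+1}/\delta_i = \ep^{2/(n-2)}$; the ratio of consecutive scales being a fixed power of $\ep$ is what converts the bubble-interaction size $(\delta_{l+1}/\delta_l)^{(n-2)/2} = \ep$ (up to lower order) into the stated rate. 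I would carry this out by changing variables $x = \xi_l + \delta_l y$, so that $U_l(\xi_l + \delta_l y) = \delta_l^{-(n-2)/2} U(y)$ with $U = U_{1,0}$, and $U_{l+1}$ becomes, to leading order, a constant $\sim (\delta_{l+1}/\delta_l)^{(n-2)/2}\delta_l^{-(n-2)/2}$ on the bulk of $A_l$ in these coordinates; the resulting integral $\int |U(y)|^{p-1} dy$ converges for $n \ge 4$ (this is precisely the dimensional threshold), and bookkeeping the powers of $\delta_l$ and $\delta_{l+1}$ yields a term of size $\delta_{l+1}/\delta_l = \ep$ up to the worst term from the region boundary, which after optimization of the annulus radii gives $\ep^{(n+6)/(2(n+2))}$. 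Finally one checks $(n+6)/(2(n+2)) > 1/2 \Leftrightarrow n+6 > n+2$, so indeed $\|R\dt\| = o(\sqrt\ep)$, uniformly for $(\md,\mt)$ in a compact subset of $\Lambda$; collecting (i)–(iii) and applying boundedness of $\Pi\dt\pe$ and $i^*$ completes the proof.
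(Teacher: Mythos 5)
Your overall plan coincides with the paper's: write $V\dt = i^*\bigl(\sum_i (-1)^{i+1} U_i^p - \sum_i (-1)^{i+1} \nabla\log a\cdot\nabla PU_i\bigr)$, decompose the error into an exponent-defect piece, a nonlinear-interaction piece, a projection-error piece $PU_i^p-U_i^p$, and a coefficient piece $\nabla\log a\cdot\nabla PU_i$, then estimate each in $L^{2n/(n+2)}$ by splitting $\Omega$ into the annuli $A_l$ and using the consecutive-bubble interaction $(\delta_{l+1}/\delta_l)^{(n-2)/2}\sim\ep$. That is exactly what the paper does, and the dominant contribution is indeed the interaction/projection term $O(\ep^{(n+6)/(2(n+2))})$.

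However, two of your side remarks are incorrect and would mislead a reader. First, the claim that $\int_{\mathbb{R}^n} U^{p-1}\,dy$ converges for $n\ge 4$ is backwards: $U^{p-1}\sim (1+|y|^2)^{-2}$, so this integral is finite only when $n<4$; for $n\ge 4$ one must keep the annular cutoffs, and the paper avoids the issue entirely by a H\"older step that reduces $\int_{A_l} U_l^{(p-1)\tilde p}U_{l+1}^{\tilde p}$ to the well-understood quantities $\int_{A_l} U_l^p U_{l+1}$ and $\int_{A_l}U_{l+1}^{p+1}$. Second, the restriction $n\ge 4$ has nothing to do with the interaction estimate in this lemma or with the sign of $p-2$: as Remark \ref{rmk_pre} states, the error estimate and Proposition \ref{prop_cont} hold also for $n=3$; the assumption $n\ge 4$ is used only in the $C^1$-estimate of the reduced energy, specifically in bounding $\|\nabla\pt P\psi_i^j\|_{L^{2n/(n+2)}}$ in \eqref{C^1_3.5} via Lemma \ref{lemma_pre_7}. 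Also, the annulus radii $\sqrt{\delta_l\delta_{l+1}}$ are not ``optimized'' — they are the fixed geometric means that make each $A_l$ the natural region where $U_l$ dominates; the exponent $\frac{1}{2}\cdot\frac{n+6}{n+2}$ comes out of the H\"older bookkeeping with those fixed radii, not from a variational choice. With these corrections, your outline matches the paper's argument.
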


 Finally, we use a standard contraction mapping argument (see  \cite[Section. 5]{MP}) to solve equation \eqref{es1}.

\begin{prop}\label{prop_cont}
For any compact set $\Lambda_0$ of $\Lambda$, there is $\ep_0 > 0$ such that for each
$\ep \in (0, \ep_0)$ and $(\md, \mt) \in \Lambda_0$, a unique $\phi\dt^{\ep} \in K\dt\pe$ exists such that
\[\Pi\dt\pe\(V\dt + \phi\dt^{\ep} - i^*\(|V\dt + \phi\dt^{\ep}|^{p-1-\ep} (V\dt + \phi\dt^{\ep})\)\) = 0\]
and
\begin{equation}\label{est_phi}
\|\phi\dt^{\ep}\| = o\(\sqrt{\ep}\).
\end{equation}
\end{prop}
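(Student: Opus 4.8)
The plan is to rewrite equation \eqref{es1} as a fixed-point problem on a small ball of $K\dt\pe$ whose radius is dictated by the error estimate of Lemma \ref{lemma_R}, and then apply the contraction mapping principle; the substantive $a$-dependent analysis has already been packaged into Proposition \ref{prop_linear} and Lemma \ref{lemma_R}. Concretely, I would introduce the nonlinear remainder
\[
N\dt(\phi) := \Pi\dt\pe\Big( i^*\big(|V\dt+\phi|^{p-1-\ep}(V\dt+\phi)\big) - i^*\big(|V\dt|^{p-1-\ep}V\dt\big) - (p-\ep)\, i^*\big(|V\dt|^{p-1-\ep}\phi\big)\Big),\qquad \phi\in K\dt\pe,
\]
so that, recalling the definition \eqref{def_L} of $L\dt$ and of $R\dt$, equation \eqref{es1} becomes $L\dt\phi = R\dt + N\dt(\phi)$. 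By Proposition \ref{prop_linear}, for $\ep\in(0,\ep_0)$ and $(\md,\mt)\in\Lambda_0$ the operator $L\dt$ is invertible on $K\dt\pe$ with $\|L\dt^{-1}\|\le c^{-1}$ uniformly; hence \eqref{es1} is equivalent to the fixed-point equation $\phi = T\dt(\phi)$, where $T\dt(\phi):=(L\dt)^{-1}\big(R\dt + N\dt(\phi)\big)$.

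Next I would establish the quadratic-type estimates for $N\dt$. Using the continuity of $i^*\colon L^{2n/(n+2)}(\Omega)\to\ho$, the elementary inequality
\[
\big||a+b|^{q-1}(a+b) - |a|^{q-1}a - q|a|^{q-1}b\big| \le C\big(|b|^{q} + |a|^{q-2}|b|^2\,\mathbf{1}_{\{q\ge 2\}}\big),\qquad q:=p-\ep,
\]
the Sobolev inequality $\|\phi\|_{L^{2n/(n-2)}(\Omega)}\le C\|\phi\|$, the Hölder bound $\big\||V\dt|^{p-2}|\phi|^2\big\|_{L^{2n/(n+2)}}\le \|V\dt\|_{L^{2n/(n-2)}}^{p-2}\|\phi\|_{L^{2n/(n-2)}}^2$, and the uniform estimate $\|V\dt\|_{L^{2n/(n-2)}(\Omega)}\le C$ (which holds because $0\le PU_i\le U_i$, see Appendix \ref{appendix_W_PW}), one gets a constant $\sigma := \min\{p-1,1\}\in(0,1]$ (here the hypothesis $n\ge 4$ enters) so that, for $\|\phi\|,\|\phi_1\|,\|\phi_2\|\le 1$,
\[
\|N\dt(\phi)\| \le C\|\phi\|^{1+\sigma}, \qquad \|N\dt(\phi_1) - N\dt(\phi_2)\| \le C\big(\|\phi_1\|^{\sigma}+\|\phi_2\|^{\sigma}\big)\|\phi_1-\phi_2\|,
\]
with $C$ independent of $\ep$ and $(\md,\mt)\in\Lambda_0$.

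Then I would close the argument by contraction. Put $\rho_\ep := 2c^{-1}\|R\dt\|$, which is $o(\sqrt\ep)$ by Lemma \ref{lemma_R}, and let $\mathcal B_\ep := \{\phi\in K\dt\pe : \|\phi\|\le\rho_\ep\}$. For $\ep$ small enough that $\rho_\ep\le 1$ and $2Cc^{-1}\rho_\ep^{\sigma}\le\tfrac12$, the estimates above give, for $\phi,\phi_1,\phi_2\in\mathcal B_\ep$,
\[
\|T\dt(\phi)\| \le c^{-1}\big(\|R\dt\| + C\rho_\ep^{1+\sigma}\big) \le \tfrac{\rho_\ep}{2} + \tfrac{\rho_\ep}{2} = \rho_\ep, \qquad \|T\dt(\phi_1) - T\dt(\phi_2)\| \le \tfrac12\|\phi_1-\phi_2\|.
\]
Thus $T\dt$ is a contraction of $\mathcal B_\ep$ into itself, and Banach's fixed-point theorem produces a unique $\phi\dt^\ep\in\mathcal B_\ep$ solving \eqref{es1}; by construction $\|\phi\dt^\ep\| \le \rho_\ep = o(\sqrt\ep)$, which is \eqref{est_phi}. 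Any solution $\phi$ of \eqref{es1} with $\|\phi\| = o(\sqrt\ep)$ lies in $\mathcal B_\ep$ for $\ep$ small, hence coincides with $\phi\dt^\ep$; this yields the stated uniqueness. (Continuity, and in fact $C^1$-dependence, of $\phi\dt^\ep$ on $(\md,\mt)$ follows by the implicit function theorem exactly as in \cite[Section 5]{MP}.)

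I do not expect a serious obstacle here: once $\|V\dt\|_{L^{2n/(n-2)}}$ is known to be uniformly bounded, the only delicate bookkeeping is identifying the exponent $\sigma(n)$ in the nonlinear estimate when $p<2$ (i.e. $n$ large) and checking the Hölder pairing for the $|V\dt|^{p-2}|\phi|^2$ term when $p\ge 2$ (i.e. $3\le n\le 6$). The genuinely hard inputs — the uniform invertibility of $L\dt$ (Proposition \ref{prop_linear}, relying on Lemma \ref{lemma_pre_5} and Lemma \ref{lemma_pre_4}) and the sharp order of $R\dt$ (Lemma \ref{lemma_R}) — are already in hand, so only a routine fixed-point argument remains.
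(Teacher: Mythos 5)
Your proposal is correct and follows essentially the same route the paper takes: the paper itself omits the details, citing the standard contraction-mapping argument of Musso--Pistoia \cite[Section 5]{MP} and feeding it the two ingredients you use (Proposition~\ref{prop_linear} for uniform invertibility of $L\dt$ and Lemma~\ref{lemma_R} for $\|R\dt\|=o(\sqrt\ep)$), and at the end of Section~\ref{sec_error} simply declares ``This completes the proof of Proposition~\ref{prop_cont}.'' Your rewriting of \eqref{es1} as $L\dt\phi = R\dt + N\dt(\phi)$, the superlinear bounds on $N\dt$, the ball $\mathcal B_\ep$ of radius comparable to $\|R\dt\|$, and Banach's fixed-point theorem are exactly the unpacking of that citation.

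One small inaccuracy, not a gap: you remark that ``the hypothesis $n\ge4$ enters'' in fixing $\sigma=\min\{p-1,1\}$. In fact $p-1=\tfrac{4}{n-2}\ge1$ for all $3\le n\le6$, so $\sigma<1$ only when $n>6$; the dimension restriction $n\ge4$ plays no role in this contraction step. As the paper explains in Remark~\ref{rmk_pre}, $n\ge4$ is used only in the $C^1$-estimate \eqref{C^1_3.5} in Lemma~\ref{lemma_C^1_4}, and Proposition~\ref{prop_cont} itself remains valid for $n=3$ (with the minor modification to Lemma~\ref{lemma_pre_7} noted there).
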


\medskip
The second step is to solve equation \eqref{es2}. More precisely, for $\ep$ small enough we will find $(\md, \mt)$ such that equation \eqref{es2} is satisfied.

Let us introduce the energy functional $J_{\ep}: \mh \to \mathbb{R}$ defined as
\[J_{\ep}(u) = {1 \over 2} \intom a(x)|\nabla u|^2 dx - {1 \over p + 1 - \ep} \intom a(x) |u|^{p+1-\ep}dx,\]
whose critical points are solutions to problem \eqref{main}  and let us define the reduced energy
functional $\wje: \Lambda \to \mathbb{R}$ by
\begin{equation}\label{red_energy}
\wje(\md, \mt) = J_{\ep}(V\dt + \phi\dt^{\ep}).
\end{equation}

First of all,  arguing as \cite[Proposition 2.2]{MP} and using Lemma \ref{lemma_pre_5} and Lemma \ref{lemma_pre_6}, we get
\begin{prop}\label{prop_red}
The function $V\dt + \phi\dt^{\ep}$ is a critical point of the functional $J_{\ep}$ if
the point $(\md, \mt)$ is a critical point of the function $\wje$.
\end{prop}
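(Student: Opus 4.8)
The plan is to run the final, variational step of the Ljapunov--Schmidt scheme, exactly as for the case $\kappa=0$ treated in \cite[Proposition 2.2]{MP}: I will show that if $(\md,\mt)$ is a critical point of $\wje$ then the Lagrange multipliers attached to equation \eqref{es1} all vanish, so that $V\dt+\phi\dt^{\ep}$ is a genuine critical point of $J_{\ep}$, hence a solution of \eqref{main}. By Proposition \ref{prop_cont} the function $\phi\dt^{\ep}\in K\dt\pe$ solves \eqref{es1}; this says precisely that the vector $V\dt+\phi\dt^{\ep}-i^*(|V\dt+\phi\dt^{\ep}|^{p-1-\ep}(V\dt+\phi\dt^{\ep}))$ is $\la\cdot,\cdot\ra$-orthogonal to $K\dt\pe$, hence lies in $K\dt$, so there are real numbers $c_i^j=c_i^j(\md,\mt,\ep)$, $i=1,\dots,k$, $j\in\{0,n\}$, with
\begin{equation*}
V\dt+\phi\dt^{\ep}-i^*\(|V\dt+\phi\dt^{\ep}|^{p-1-\ep}(V\dt+\phi\dt^{\ep})\)=\sij c_i^j\, P\psi_i^j .
\end{equation*}
Since the left-hand side is the element of $\mh$ representing the differential $J_{\ep}'(V\dt+\phi\dt^{\ep})$ through the inner product $\la\cdot,\cdot\ra$, the claim reduces to showing $c_i^j\equiv0$.

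Assume now that $(\md,\mt)$ is a critical point of $\wje$. Differentiating \eqref{red_energy} with respect to each of the $2k$ parameters $\lambda\in\{d_1,\dots,d_k,t,s_1,\dots,s_{k-1}\}$ and using the identity above gives
\begin{equation*}
0=\partial_\lambda\wje(\md,\mt)=\la J_{\ep}'(V\dt+\phi\dt^{\ep}),\partial_\lambda(V\dt+\phi\dt^{\ep})\ra=\sij c_i^j\(\la P\psi_i^j,\partial_\lambda V\dt\ra+\la P\psi_i^j,\partial_\lambda\phi\dt^{\ep}\ra\).
\end{equation*}
As $\lambda$ ranges over the $2k$ parameters this is a homogeneous $2k\times2k$ linear system in the unknowns $c_i^j$; it remains to prove that its coefficient matrix is invertible for $\ep$ small and $(\md,\mt)$ in a fixed compact subset of $\Lambda$, which then forces $c_i^j=0$ and ends the proof.

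For the term $\la P\psi_i^j,\partial_\lambda\phi\dt^{\ep}\ra$ I would differentiate the identity $\la\phi\dt^{\ep},P\psi_i^j\ra=0$ (valid since $\phi\dt^{\ep}\in K\dt\pe$) to obtain $\la\partial_\lambda\phi\dt^{\ep},P\psi_i^j\ra=-\la\phi\dt^{\ep},\partial_\lambda P\psi_i^j\ra$, which by \eqref{est_phi} and the ($\delta_i$-weighted) bounds on $\|\partial_\lambda P\psi_i^j\|$ coming from Appendix \ref{appendix_W_PW} is $o(1)$ relative to the corresponding entry of the main term. For the main term $\la P\psi_i^j,\partial_\lambda V\dt\ra$, the estimates of Lemmas \ref{lemma_pre_5} and \ref{lemma_pre_6} and of Appendix \ref{appendix_W_PW}, comparing $P\udx$, $\partial_\delta P\udx$, $\partial_{\xi_n}P\udx$ with $\udx$, $\pdxn$, $\psi_{\delta,\xi}^n$, together with the wide separation $\delta_1\gg\cdots\gg\delta_k$ forced by \eqref{conc_para}, show that after rescaling the $\pdl$- and $\psl$-equations and the unknowns $c_i^j$ by suitable powers of $\delta_l$, the matrix $\(\la P\psi_i^j,\partial_\lambda V\dt\ra\)$ becomes, up to an $o(1)$ error, block diagonal, with a $d$-block indexed by $c_i^0$ and a $(t,s)$-block indexed by $c_i^n$, each a fixed invertible matrix whose nonsingularity is the nondegeneracy of the standard bubble recorded after \eqref{eq_of_psi}. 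Hence the full system, in this rescaled form, is a small perturbation of an invertible one, so $c_i^j=0$ for all $i,j$.

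The main obstacle is the careful bookkeeping forced by the different-rates feature of our construction: since the $\delta_i$ vanish at genuinely different speeds, one must choose the correct $\delta_l$-weights for the $2k$ equations and for the multipliers so that the limiting linear system decouples into the $d$-block and the $(t,s)$-block, and one must verify that both the cross interactions $\la P\psi_i^j,\partial_\lambda V\dt\ra$ between bubbles of different scales and the correction terms $\la P\psi_i^j,\partial_\lambda\phi\dt^{\ep}\ra$ become $o(1)$ after this rescaling. All of these are estimates of the same type already used in Section \ref{sec_error} and Appendix \ref{appendix_W_PW}, so no essentially new ingredient is needed.
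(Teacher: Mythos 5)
Your proposal is essentially the paper's own argument: the paper simply cites \cite[Proposition~2.2]{MP} together with Lemmas~\ref{lemma_pre_5} and~\ref{lemma_pre_6}, and what you write out is precisely that scheme — represent $J_\ep'(V\dt+\phi\dt^\ep)$ via the Lagrange multipliers $c_i^j$ through \eqref{es1}, differentiate $\wje$ and test against $\partial_\lambda(V\dt+\phi\dt^\ep)$, handle $\la P\psi_i^j,\partial_\lambda\phi\dt^\ep\ra$ by differentiating the orthogonality relations, and conclude $c_i^j=0$ because the $2k\times2k$ coefficient matrix is, after rescaling, a small perturbation of an invertible matrix built from Lemma~\ref{lemma_pre_4}.

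One imprecision worth flagging: the rescaled matrix is not block diagonal in the decomposition you name ($d$-rows versus $c_i^0$-columns, $(t,s)$-rows versus $c_i^n$-columns). Since $\partial_{d_l}V\dt=(-1)^{l+1}\ep^{\frac{n-1+2(l-1)}{n-2}}P(\psi_l^0+s_lP\psi_l^n)$, each $d_l$-row carries, besides the leading $c_l^0$-entry of order $a(\xi_0)c_0/(d_l\delta_l)$, a $c_l^n$-entry of order $s_la(\xi_0)c_n/(d_l\delta_l)$, which does not vanish after the natural $d_l\delta_l$-rescaling (it equals $\pm s_l a(\xi_0)c_n + o(1)$). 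So the limit matrix is block \emph{upper triangular} rather than block diagonal: the off-diagonal $d$-to-$c^n$ block is the nonzero diagonal matrix with entries $\pm s_l a(\xi_0)c_n$ (its last entry vanishing only because $s_k=0$), while the $(t,s)$-to-$c^0$ block does go to zero by Lemma~\ref{lemma_pre_4}. Your conclusion is unaffected — a block triangular matrix with invertible diagonal blocks is invertible, and the diagonal blocks are $\pm a(\xi_0)c_0\,\mathrm{Id}$ and $\pm a(\xi_0)c_n\,\mathrm{Id}$ with $c_0,c_n>0$ — but the description of the limiting structure should be corrected.
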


Thus, the problem is reduced to search for critical points of $\wje,$ whose asymptotic expansion is needed. The $C^0$ and $C^1$ estimates are carried out in Section \ref{sec_expansion_1}  and Section \ref{sec_expansion_2}, respectively, and they read as follows.

\begin{prop}\label{prop_energy_est}
It holds true that
\begin{equation}\label{energy_exp}
\wje(\md, \mt) = a(\xi_0)[c_1+c_2\ep-c_3\ep \log \ep] + \ep\Phi(\md, \mt)  + o(\ep),
\end{equation}
$C^1$-uniformly on compact sets of $\Lambda$.
Here, the function $\Phi: \Lambda \to \mathbb{R}$ is defined by
\begin{equation}\label{phi}
\Phi(\md, \mt) := \partial_{\nu} a(\xi_0) c_4 t + a(\xi_0) \left[ c_5 \({d_1 \over 2t}\)^{n-2} + c_6 \sum_{i=1}^{k-1} \({d_{i+1} \over d_i}\)^{n-2 \over 2} {1\over(1+s_i^2)^{n-2\over2}} \right] - a(\xi_0) c_7 \sum_{i=1}^{k} \log d_i
\end{equation}
where $c_i$'s are all positive constants.
\end{prop}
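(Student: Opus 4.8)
The plan is to expand $\wje(\md,\mt)=J_\ep(V\dt+\phi\dt^\ep)$ in two stages: first replace $\phi\dt^\ep$ by $0$ at negligible cost, then expand $J_\ep(V\dt)$ term by term. For the first stage I would Taylor-expand $J_\ep$ around $V\dt$: since $V\dt+\phi\dt^\ep$ satisfies \eqref{es1} and $\phi\dt^\ep\in K\dt^\perp$, the first-order term $J_\ep'(V\dt)[\phi\dt^\ep]$ is controlled by $\|R\dt\|\,\|\phi\dt^\ep\|$, and the quadratic and higher-order terms by $\|\phi\dt^\ep\|^2$ plus nonlinear corrections. Using Lemma \ref{lemma_R} ($\|R\dt\|=O(\ep^{\frac12\cdot\frac{n+6}{n+2}})$) and \eqref{est_phi} ($\|\phi\dt^\ep\|=o(\sqrt\ep)$) one gets $J_\ep(V\dt+\phi\dt^\ep)=J_\ep(V\dt)+o(\ep)$ for the $C^0$ statement; for the $C^1$ statement one differentiates in $(\md,\mt)$ and shows the derivatives of the remainder are $o(\ep)$ as well, which requires the analogous $C^1$ bounds on $R\dt$ and on $\md,\mt\mapsto\phi\dt^\ep$ (these follow by differentiating \eqref{es1} and reusing Proposition \ref{prop_linear}, exactly as in \cite[Proposition 2.2]{MP}).

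The heart of the matter is thus the expansion of $J_\ep(V\dt)$. I would write $V\dt=\sik(-1)^{i+1}PU_i$ and split the Dirichlet and nonlinear parts. For the Dirichlet part, $\intom a|\nabla V\dt|^2=\sij(-1)^{i+j}\la PU_i,PU_j\ra$; the diagonal terms give, via the estimates of Appendix \ref{appendix_W_PW} for $P\udx-\udx$ (which carry the boundary-curvature and $\partial_\nu a(\xi_0)$ information through the parameter $t$) and the substitution \eqref{conc_para}, a leading constant $a(\xi_0)c_1$ plus the term $a(\xi_0)c_5(d_1/2t)^{n-2}$ coming from the interaction of the slowest bubble with its reflection across $\po$ (equivalently, with the regular part of the Green's function), while the off-diagonal terms $\la PU_i,PU_{i+1}\ra$ produce, after inserting \eqref{conc_para}–\eqref{conc_pt}, the bubble-tower interaction $a(\xi_0)c_6\sum_{i=1}^{k-1}(d_{i+1}/d_i)^{\frac{n-2}2}(1+s_i^2)^{-\frac{n-2}2}$; here the choice of scales $\delta_i=\ep^{\frac{n-1+2(i-1)}{n-2}}d_i$ is exactly what makes consecutive interactions of order $\ep$ and non-consecutive ones of order $o(\ep)$. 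For the nonlinear part I would expand $|V\dt|^{p+1-\ep}$ using $p+1-\ep$ close to $\frac{2n}{n-2}$, pulling out the factor $\ep\log$ terms from $|V\dt|^{-\ep}=1-\ep\log|V\dt|+\cdots$ and from $\frac1{p+1-\ep}$; integrating $U_i^{p+1}\log U_i$ against $a$ and freezing $a$ at $\xi_0$ (the correction being $O(\ep\cdot\ep^{1/(n-2)})=o(\ep)$ by (a2) and a Taylor expansion of $a$) yields the $a(\xi_0)(c_2\ep-c_3\ep\log\ep)$ terms and the $-a(\xi_0)c_7\sum_i\log d_i$ term; the linear term $\partial_\nu a(\xi_0)c_4 t$ arises from expanding $a(\xi)$ to first order along $\nu$ near $\xi_0$ inside $\intom a\,U^{p+1}$, using that $\xi_0$ is a critical point of $a|_{\po}$ so the tangential first-order contribution vanishes and the symmetry (a3) kills odd-order cross terms. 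Positivity of $c_4$ follows from $\partial_\nu a(\xi_0)>0$ in (a2).

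The main obstacle I expect is bookkeeping the competition of the four different orders of smallness — the $O(1)$ constants, the $O(\ep\log\ep)$ term, the genuine $O(\ep)$ profile $\Phi$, and the $o(\ep)$ remainder — uniformly on compact subsets of $\Lambda$ and, for the $C^1$ estimate, after one differentiation in each of the $2k$ parameters. In particular one must check that the cross-interactions $\la PU_i,PU_j\ra$ with $|i-j|\ge2$, the self-interaction terms beyond the slowest bubble, and the $a$-variation corrections are all truly $o(\ep)$ and remain so after differentiating; this is where the scale separation \eqref{conc_para} and the precise Appendix \ref{appendix_W_PW} estimates for $P\udx$ near a boundary point (the new ingredient not in \cite{PW,MP}) are essential, and where the hypothesis $n\ge4$ enters to keep certain remainder integrals convergent and subleading.
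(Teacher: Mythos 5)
Your $C^0$ expansion follows the paper's Section~\ref{sec_expansion_1} almost exactly: the three-way split $J_\ep(V+\phi)-J_\ep(V)$ (Lemma~\ref{lemma_C^0_1}), the ``pure'' $p+1$-energy of $V$ (Lemma~\ref{lemma_C^0_2}), and the $\ep$-deviation of the exponent (Lemma~\ref{lemma_C^0_3}); your attributions of $c_5(d_1/2t)^{n-2}$ to the slowest bubble's reflection, of $c_6$ to consecutive-bubble interactions, and of $c_2\ep-c_3\ep\log\ep$ and $-c_7\sum\log d_i$ to the $|V|^{-\ep}$-expansion all match \eqref{s23_3}, \eqref{s23_4} and \eqref{s4}. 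The scale-separation claim ($\delta_i=\ep^{\frac{n-1+2(i-1)}{n-2}}d_i$ makes consecutive interactions $O(\ep)$ and the rest $o(\ep)$) is the right heuristic.

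The genuine gap is the $C^1$ statement, which you dispatch in one sentence as ``differentiate in $(\md,\mt)$ and show the derivatives of the remainder are $o(\ep)$, reusing Proposition~\ref{prop_linear} exactly as in \cite{MP}.'' This works for $r=d_l$ and $r=s_l$ (Lemmas~\ref{lemma_C^1_1}--\ref{lemma_C^1_3}) because there $\partial_r V$ has the same size as $V$: $\partial_{d_l}U_l,\ \partial_{s_l}U_l=O(\delta_l(\psi_l^0+\psi_l^n))=O(U_l)$. It \emph{fails} for $r=t$, since $\partial_t U_i=\ep\,\psi_i^n$ is not $O(U_i)$ but $O(\ep/\delta_i)$ times larger, so the naive pairing of $\partial_t V$ against the error $R\dt$ and against $\phi$ does not give $o(\ep)$. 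This is precisely the obstruction the paper flags at the start of Subsection~5.2 and overcomes with a Pohozaev-type identity borrowed from \cite{EMP,MP,Re}: one replaces $\partial_t V=\ep\sum(-1)^{i+1}\partial_{(\xi_i)_n}PU_i$ by $\ep\,\partial_{x_n}V$, integrates $S(V+\phi)\,\partial_{x_n}(V+\phi)$ by parts, and evaluates the resulting boundary and volume terms (\eqref{C^1_4}, \eqref{C^1_5}). It is also exactly in this argument, at estimate \eqref{C^1_3.5}, that the hypothesis $n\ge 4$ is used essentially (Remark~\ref{rmk_pre}), whereas your sketch only mentions $n\ge4$ vaguely. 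Without this special treatment of the $t$-derivative your proof of the $C^1$-uniformity is incomplete.

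A smaller imprecision: $c_4$ is a universal positive constant coming from $\int_{\rn}(1+|y|^2)^{-n}dy$; its positivity does not ``follow from'' $\partial_\nu a(\xi_0)>0$ --- rather, (a2) makes the full coefficient $\partial_\nu a(\xi_0)c_4$ positive, which is what gives $\Phi$ its min-max critical point in $t$.
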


Finally, we can prove  Theorem \ref{thm_main} by
showing that $\wje$ has a critical point in $\Lambda$.
\begin{proof}[Proof of Theorem \ref{thm_main}]
The fact that $\partial_{\nu} a(\xi_0)$ is positive (see assumption (a2)) ensures that the function $\Phi$ defined in \eqref{phi} has a non-degenerate critical point of min-max type
(a minimum in $t$ and $d_i$'s and a maximum in $s_i$'s) which is stable under $C^1$-perturbations  (see Page 7 in \cite{MP}).
Therefore, by Proposition \ref{prop_energy_est}, we deduce that if $\ep  $
is small enough the function $\wje$ has a critical point. The claim follows by
Proposition \ref{prop_red}.
\end{proof}

\section{Estimate of the error term $R\dt$}\label{sec_error}
This section is devoted to prove Lemma \ref{lemma_R}.  For sake of brevity, we drop the subscript $\md, \mt.$

\medskip
  Using the definition of $V$ in \eqref{def_V}, we decompose first
\begin{equation}\label{cont_1}
\begin{aligned}
R &:= \Pi\pe \(i^*\(|V|^{p-1-\ep} V\) - V\) = \Pi\pe \bigg(i^*\bigg(|V|^{p-1-\ep} V - \sik (-1)^{i+1} U_i^p + \sik (-1)^{i+1} \nabla \log a \cdot \nabla PU_i\bigg)\bigg)\\
&=\Pi\pe \(i^*\(|V|^{p-1-\ep} V  - |V|^{p-1} V \)\) + \Pi\pe \bigg(i^*\bigg(|V|^{p-1} V - \sik(-1)^{i+1} PU_i^p \bigg)\bigg) \\
&\ + \sik(-1)^{i+1} \Pi\pe \(i^*\(PU_i^p - U_i^p\)\) + \sik (-1)^{i+1} \Pi\pe \(i^*\(\nabla \log a \cdot \nabla PU_i\)\) =: R_1 + R_2 + \sik R_3^i + \sik R_4^i.
\end{aligned}
\end{equation}

\medskip \noindent
\textit{Estimate of $R_1$.} Set $\tilde{p} := {2n \over n+2}$. By the boundedness of $i^*: L^{2n \over n+2}(\Omega) \to \ho$, the mean value theorem and
\begin{equation}\label{cont_2}
|u|^{q}|\log|u|| = O\(|u|^{q+\sigma} + |u|^{q-\sigma}\) \quad \text{for any } q > 1 \text{ and small } \sigma >0,
\end{equation}
it holds
\begin{align*}
\|R_1\|^{\tp} &\le \left\|i^*\(\(|V|^{p-1-\ep} - |V|^{p-1}\) V\)\right\|^{\tp}
\le C \left\|\(|V|^{p-1-\ep} - |V|^p-1\) V\right\|^{\tp}_{L^{\tp}(\Omega)}\\
&= C\ep^{\tp} \intom |\log|V||^{\tp}\cdot \sup_{\theta \in [0,1]}|V|^{(p - \theta \ep)\tp} \le C \ep^{\tp} \intom \(|V|^{p\tp - \sigma'} + |V|^{p\tp + \sigma'}\)\\
&\le C\ep^{\tp} \sik \intom \(U_i^{{2n \over n-2} - \sigma'} + U_i^{{2n \over n-2} + \sigma'}\) = O\(\ep^{\tp - \sigma''}\)
\end{align*}
where $\sigma'$ and $\sigma'' > 0$ are constants small enough. Hence
\begin{equation}\label{cont_3}
\|R_1\| = O\(\ep^{1 - \sigma}\) \quad \text{for any small } \sigma > 0.
\end{equation}

\medskip \noindent
\textit{Estimate of $R_2$.} Let $f(s) := |s|^{p-1}s$ for $s \in \mathbb{R}$ and choose $\rho > 0$ sufficiently small so that $\overline{B(\xi_k, \rho \ep)} \subset \Omega$. Following the approach introduced in \cite{MP}, we divide the domain $\Omega$ into $k+1$ mutually disjoint subsets, namely,
\[\Omega = \(\bigcup_{l=1}^k A_l\) \cup \(\Omega \setminus B(\xi_k, \rho \ep)\)
\]
where $A_l$'s are annuli defined as
\begin{equation}\label{A_l}
A_l = B\(\xi_k, \sqrt{\delta_{l-1}\delta_l}\) \setminus B\(\xi_k, \sqrt{\delta_l\delta_{l+1}}\) \quad \text{with } \delta_0 = {(\ep \rho)^2 \over \delta_1}, \ \delta_{k+1} = 0.
\end{equation}
Then by the mean value theorem,
\begin{align*}
\|R_2\|^{\tp} &\le C \bigg\||V|^{p-1} V - \sik(-1)^{i+1} PU_i^p\bigg\|^{\tp}_{L^{\tp}(\Omega)}
= C \sum_{l=1}^k \intal \bigg||V|^{p-1} V - \sik(-1)^{i+1} PU_i^p\bigg|^{\tp} + O\(\ep^{n \over n-2}\)\\
&= C \sum_{l=1}^k \intal \bigg|f\bigg((-1)^{l+1}PU_l + \sum_{i \ne l} (-1)^{i+1} PU_i\bigg) - f\bigg((-1)^{l+1}PU_l \bigg) \bigg|^{\tp} + O\(\ep^{n \over n-2}\)\\
&= O\(\sum_{l=1}^{k-1} \intal U_l^{(p-1)\tp} U_{l+1}^{\tp}\) + O\(\sum_{l=2}^k \intal U_l^{(p-1)\tp} U_{l-1}^{\tp}\) + O\(\ep^{n \over n-2}\).
\end{align*}
By \eqref{s23_4} and \eqref{s3_1.5} we deduce
\begin{align*}
\intal U_l^{(p-1)\tp} U_{l+1}^{\tp} &\le \left\|U_l^{8n \over n^2-4}U_{l+1}^{8n \over (n+2)^2}\right\|_{L^{(n+2)^2 \over 8n}(\Omega)}\left\|U_{l+1}^{2n(n-2) \over (n+2)^2}\right\|_{L^{(n+2)^2 \over (n-2)^2}(\Omega)}\\
&= \(\intal U_l^pU_{l+1}\)^{8n \over (n+2)^2} \cdot \(\intal U_{l+1}^{p+1} \)^{(n-2)^2 \over (n+2)^2}\\
&= O\(\ep^{8n \over (n+2)^2}\) \cdot O\(\ep^{n(n-2) \over (n+2)^2}\) = O\(\ep^{n(n+6) \over (n+2)^2}\)
\end{align*}
for $l = 1, \cdots, k-1$, and similarly
\[\intal U_l^{(p-1)\tp} U_{l-1} = O\(\ep^{n(n+6) \over (n+2)^2}\)\]
for $l = 2, \cdots, l$. Therefore we obtain
\begin{equation}\label{cont_4}
\|R_2\| = O\(\ep^{{1 \over 2} \cdot {n+6 \over n+2}}\) + O\(\ep^{{1 \over 2} \cdot {n+2 \over n-2}}\).
\end{equation}

\medskip \noindent
\textit{Estimate of $R_3$.} By the mean value theorem again,
\[\left \|R_3^i \right \|^{\tp} \le C \left \|PU_i^p - U_i^p \right \|^{\tp}_{L^{\tp}(\Omega)}
\le C \intom \(U_i^{(p-1)\tp}|PU_i - U_i|^{\tp} + |PU_i - U_i|^{p+1}\)\]
Arguing as in the proof of Lemma \ref{lemma_pre_3.5}, we get
\begin{equation}\label{cont_4.5}
\intom |PU_i - U_i|^{p+1} = O\(\ep^{n \over n-2}\),
\end{equation}
and
\begin{align*}
&~\intom U_i^{(p-1)\tp}|PU_i - U_i|^{\tp}\\
&\le \int_{B(\xi_i, \rho \ep)} U_i^{(p-1)\tp}|PU_i - U_i|^{\tp} + \(\int_{\Omega \setminus B(\xi_k, \rho \ep)} U_i^{p+1}\)^{4 \over n+2}\(\intom |PU_i - U_i|^{p+1}\)^{n-2 \over n+2}\\
&\le \(\int_{B(\xi_i, \rho \ep)} U_i^p|PU_i - U_i|\)^{8n \over (n+2)^2} \cdot \(\intom |PU_i - U_i|^{p+1} \)^{(n-2)^2 \over (n+2)^2} + O\(\ep^{4n \over n^2-4}\)\cdot O\(\ep^{n \over n+2}\)\\
&= O\(\ep^{n(n+6) \over (n+2)^2}\) + O\(\ep^{n(n+2) \over n^2-4}\)
\end{align*}
(see \cite[Lemma C.2 (64)]{ACP} for the estimate of the term $\int_{B(\xi_i, \rho \ep)} U_i^p|PU_i - U_i|$). Thus
\begin{equation}\label{cont_5}
\|R_3\| = O\(\ep^{{1 \over 2} \cdot {n+6 \over n+2}}\) + O\(\ep^{{1 \over 2} \cdot {n+2 \over n-2}}\).
\end{equation}

\medskip \noindent
\textit{Estimate of $R_4$.} Lemma \ref{lemma_pre_7} yields
\begin{equation}\label{cont_6}
\|R_4\| \le C \|\nabla PU_i\|_{L^{\tp}(\Omega)} = O(\ep)
\end{equation}

\medskip
In conclusion, from \eqref{cont_1}, \eqref{cont_3}, \eqref{cont_4}, \eqref{cont_5} and \eqref{cont_6}, we obtain
\[\|R\| = O\(\ep^{1 - \sigma}\) + O\(\ep^{{1 \over 2} \cdot {n+6 \over n+2}}\) + O\(\ep^{{1 \over 2} \cdot {n+2 \over n-2}}\) + O(\ep) = O\(\ep^{{1 \over 2} \cdot {n+6 \over n+2}}\).\]
This completes the proof of Proposition \ref{prop_cont}.

\section{Energy expansion: The $C^0$-estimates}\label{sec_expansion_1}
The main task of this section is to prove that estimates \eqref{energy_exp} holds in the $C^0$-sense.
We recall that the function  $V\dt$ is defined in \eqref{def_V}  and the function $\phi^{\ep}\dt$ is given in  Proposition \ref{prop_cont}. For the sake of brevity, we denote $V = V\dt$ and $\phi =\phi^{\ep}\dt.$
We decompose the reduced functional into three parts
\begin{multline*}
\wje(\md, \mt) = \(J_{\ep}(V\dt + \phi\dt^{\ep}) - J_{\ep}(V\dt)\) + \({1 \over 2} \intom a(x)|\nabla V\dt|^2 dx - {1 \over p + 1} \intom a(x) |V\dt|^{p+1}dx\)\\
+ \( {1 \over p + 1} \intom a(x) |V\dt|^{p+1} dx - {1 \over p + 1 -\ep} \intom a(x) |V\dt|^{p+1-\ep}dx\)
\end{multline*}
and we estimate each of them. The $C^0$-estimate will follow by the three lemmata Lemma \ref{lemma_C^0_1}, Lemma \ref{lemma_C^0_2} and Lemma \ref{lemma_C^0_3}.

\begin{lemma}\label{lemma_C^0_1}
It holds true that
\begin{equation}\label{s1}
J_{\ep}(V + \phi) - J_{\ep}(V) = o(\ep).
\end{equation}
\end{lemma}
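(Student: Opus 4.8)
The plan is to exploit the fact that $\phi = \phi^\ep\dt$ solves equation \eqref{es1} and is $o(\sqrt\ep)$ in $\|\cdot\|$ by Proposition \ref{prop_cont}. Write $f_\ep(u) := |u|^{p-1-\ep}u$ and $F_\ep(u) := \frac{1}{p+1-\ep}|u|^{p+1-\ep}$, so that $J_\ep'(V)[\phi] = \la V,\phi\ra - \intom a\, f_\ep(V)\phi$. First I would Taylor-expand $J_\ep(V+\phi)$ around $V$ to second order:
\[
J_\ep(V+\phi) - J_\ep(V) = J_\ep'(V)[\phi] + \frac12 J_\ep''(V)[\phi,\phi] + \int_0^1 (1-\theta)\big(J_\ep''(V+\theta\phi) - J_\ep''(V)\big)[\phi,\phi]\, d\theta.
\]
The plan is to show each of the three terms on the right is $o(\ep)$.

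For the linear term, the key point is that $\phi \in K\dt\pe$ and $\phi$ satisfies \eqref{es1}, which says $V + \phi - i^*(f_\ep(V+\phi)) \in K\dt$. Pairing with $\phi$ and using $\la i^*(g),\phi\ra = \intom a g\phi$, I get $J_\ep'(V+\phi)[\phi] = 0$, hence
\[
J_\ep'(V)[\phi] = -\big(J_\ep'(V+\phi) - J_\ep'(V)\big)[\phi] = -\intom a\big(f_\ep(V+\phi) - f_\ep(V)\big)\phi.
\]
Writing $f_\ep(V+\phi) - f_\ep(V) = (p-\ep)|V|^{p-1-\ep}\phi + O(\min\{|V|^{p-2-\ep}\phi^2, |\phi|^{p-\ep}\})$ and using the definition of $R\dt$ together with Lemma \ref{lemma_R}, the first-order piece contributes $\la R\dt,\phi\ra + (\text{terms controlled by } \|L\dt\phi\| \text{ bounds})$, which is $O(\|R\dt\|\,\|\phi\|) = o(\sqrt\ep)\cdot o(\sqrt\ep) = o(\ep)$ after invoking Proposition \ref{prop_linear} to absorb the remaining linear-in-$\phi$ contribution; the higher-order piece is bounded by $C\|\phi\|^{p+1-\ep} + C\|\phi\|^{\min\{3,p+1-\ep\}}$, which is $o(\ep)$ since $\|\phi\| = o(\sqrt\ep)$ and $p+1 > 2$.

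For the quadratic term, $J_\ep''(V)[\phi,\phi] = \|\phi\|^2 - (p-\ep)\intom a|V|^{p-1-\ep}\phi^2 = \la L\dt\phi,\phi\ra + (\text{correction from }\Pi\dt\pe)$; since $\|\phi\|^2 = o(\ep)$, this is $o(\ep)$ directly (no cancellation needed — the bound $\|\phi\| = o(\sqrt\ep)$ is exactly calibrated for this). The remainder term is estimated by the Hölder-type inequality $|(J_\ep''(V+\theta\phi)-J_\ep''(V))[\phi,\phi]| \le C\intom a\big||V+\theta\phi|^{p-1-\ep} - |V|^{p-1-\ep}\big|\phi^2 \le C\|\phi\|^{p+1-\ep} = o(\ep^{(p+1)/2}) = o(\ep)$ when $p > 1$. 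The main obstacle I anticipate is bookkeeping the linear term cleanly: one must be careful that $J_\ep'(V)[\phi]$ does not merely vanish (it does not — $V$ is not critical) but is genuinely $o(\ep)$, and this requires combining the $o(\sqrt\ep)$ smallness of both $R\dt$ and $\phi$ rather than either estimate alone. Once the pairing trick converts $J_\ep'(V)[\phi]$ into a difference that is quadratically small in $(\|R\dt\|, \|\phi\|)$, the conclusion follows; this is exactly the argument in \cite[Section 5]{MP}, so I would simply cite it for the routine parts and emphasize only that the exponents work out because $\|\phi\|, \|R\dt\| = o(\sqrt\ep)$.
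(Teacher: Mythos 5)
Your argument uses the same two inputs as the paper's --- the criticality relation $J_\ep'(V+\phi)[\phi]=0$ (from \eqref{es1} and $\phi\in K\dt\pe$) and $\|\phi\|=o(\sqrt\ep)$ (Proposition \ref{prop_cont}) --- but takes a detour the paper avoids, and the detour is executed imprecisely. The paper Taylor-expands about the base point $V+\phi$, so the first-order term vanishes identically, leaving only
\[
J_\ep(V+\phi)-J_\ep(V) = -\int_0^1 t\, J_\ep''(V+t\phi)[\phi,\phi]\,dt,
\]
and the integrand is bounded uniformly in $t$ by $C\|\phi\|^2=o(\ep)$ via H\"older and Sobolev. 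You expand about $V$ instead, which forces a separate estimate of $J_\ep'(V)[\phi]$, and there your write-up goes astray. The identity $J_\ep'(V)[\phi]=-(J_\ep'(V+\phi)-J_\ep'(V))[\phi]$ should produce $-\|\phi\|^2+\intom a\,(f_\ep(V+\phi)-f_\ep(V))\phi$ (you omitted the $\|\phi\|^2$ term --- harmless, since it is $o(\ep)$ --- and the sign), and the linearized piece $(p-\ep)\intom a\,|V|^{p-1-\ep}\phi^2$ is then controlled by H\"older alone: it is $O(\|\phi\|^2)=o(\ep)$. Your assertion that this piece contributes $\la R\dt,\phi\ra$ and requires Lemma \ref{lemma_R} and Proposition \ref{prop_linear} is not correct: $R\dt$ does not occur anywhere in $\intom a(f_\ep(V+\phi)-f_\ep(V))\phi$. (If you want $R\dt$ to appear, the clean route is the identity $J_\ep'(V)[\phi]=-\la R\dt,\phi\ra$, a direct consequence of the definition of $R\dt$ and $\phi\perp K\dt$, giving $|J_\ep'(V)[\phi]|\le\|R\dt\|\,\|\phi\|=o(\ep)$; but that is a replacement for your chain of equalities, not a supplement to it.) None of this changes the conclusion --- every quantity you estimate really is $o(\ep)$ --- but the paper's choice of expansion point eliminates the linear-term discussion entirely, and your instinct that the smallness of both $R\dt$ and $\phi$ must be combined is misplaced for the computation you actually set up: there only the smallness of $\phi$ is used.
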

\begin{proof}
Using Taylor's theorem and the fact that $J'_{\ep}(V+\phi)[\phi] = 0$, we get
\[J_{\ep}(V + \phi) - J_{\ep}(V) = - \int_0^1 t J_{\ep}''(V + t\phi)[\phi,\phi] dt.\]
On the other hand, since $\|\phi\| = o(\sqrt{\ep})$,
\[ \left|J_{\ep}''(V + t\phi))[\phi,\phi]\right| \le C\(\intom a |\nabla \phi|^2 + \sik \intom a U_i^{p-1-\ep} \phi^2 + \intom a |\phi|^{p+1-\ep}\) = o(\ep)\]
for some $C > 0$. Therefore   \eqref{s1} follows.
\end{proof}

It is useful to introduce the following   constants:
\begin{align}
a_1 &= \alpha_n^{p+1} \int_{\rn} {1 \over (1 + |y|^2)^n}dy, \label{a_1} \\
a_2 &= \alpha_n^{p+1} \int_{\rn} {1 \over (1 + |y|^2)^{n+2 \over 2}}dy, \label{a_2}\\
a_3 &= \alpha_n^{p+1} \int_{\rn} {1 \over (1 + |y|^2)^n} \log{\alpha_n \over (1+|y|^2)^{n-2 \over 2}}dy .
\end{align}
Here, $\alpha_n = [n(n-2)]^{n-2 \over 4}$.

\begin{lemma}\label{lemma_C^0_2}
It holds true that
\begin{equation}\label{s23}
\begin{aligned}
&~{1 \over 2} \intom a(x)|\nabla V\dt|^2 dx - {1 \over p + 1} \intom a(x) |V\dt|^{p+1}dx\\
&= \({1 \over 2} - {1 \over p+1}\) ka_1 \(a(\xi_0) + \partial_{\nu}a(\xi_0) t \ep\) + a(\xi_0) \left[ {a_2 \over 2} \({d_1 \over 2t}\)^{n-2} + \sum_{i=1}^{k-1} \({d_{i+1} \over d_i} \)^{n-2 \over 2} {\alpha_n^{p+1}|B^n| \over (1 + s_i^2)^{n-2 \over 2}} \right] \ep + o(\ep).\\
\end{aligned}
\end{equation}
\end{lemma}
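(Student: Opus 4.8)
The plan is to expand the quadratic term $\frac12\intom a|\nabla V|^2$ and the nonlinear term $\frac{1}{p+1}\intom a|V|^{p+1}$ separately, and then combine. For the quadratic term, I would use $\|PU_i\|^2 = \intom a|\nabla PU_i|^2 = \la PU_i, PU_i\ra$, and exploit the defining property of the projection \eqref{proj} together with the equation $-\Delta U_i = U_i^p$ to write $\la PU_i, PU_j\ra$ essentially as $\intom a\, U_i^p\, PU_j$ up to lower-order boundary corrections coming from $\nabla\log a$ (as already appears in the $R_4$ term of Section \ref{sec_error}). Expanding $\|V\|^2 = \sum_{i,j} (-1)^{i+j}\la PU_i, PU_j\ra$, the diagonal terms $i=j$ give $k$ copies of $a(\xi_0)a_1$ plus the first-order Taylor correction $\partial_\nu a(\xi_0)\,t\,\ep$ (using $\xi_i \approx \xi_0 + \ep t\nu$ and a change of variables $y = (x-\xi_i)/\delta_i$, together with assumption (a3) which kills the tangential first-order terms), while the off-diagonal terms with $|i-j|=1$ give the interaction contributions. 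I expect the interaction $\la PU_i, PU_{i+1}\ra \sim \alpha_n^{p+1}\,(\delta_{i+1}/\delta_i)^{(n-2)/2}(1+s_i^2)^{-(n-2)/2}|B^n|\cdot a(\xi_0)$, and the term involving the distance to the boundary $\la PU_1,PU_1\ra$'s correction gives the $(d_1/2t)^{n-2}$ term through the reflected bubble appearing in the expansion of $PU_1 - U_1$ near $\po$ (see Appendix \ref{appendix_W_PW}).

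For the nonlinear term, I would similarly decompose $\intom a|V|^{p+1}$ over the annuli $A_l$ introduced in \eqref{A_l}, plus the outer region $\Omega\setminus B(\xi_k,\rho\ep)$. On each $A_l$, $V$ is dominated by $(-1)^{l+1}PU_l$, so $|V|^{p+1} \approx U_l^{p+1} + (p+1)(-1)^{l+1}U_l^p(V - (-1)^{l+1}PU_l) + \cdots$; the leading term integrates to $a(\xi_0)a_1$ (after the change of variables and a first-order Taylor expansion of $a$ giving the $\partial_\nu a(\xi_0)t\ep$ piece again), and the cross term picks up the same interaction integrals $\intal U_l^p U_{l\pm1}$ whose orders were already estimated in Section \ref{sec_error}. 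The projection error $PU_l - U_l$ contributes the boundary/reflection term for $l=1$ and is otherwise negligible. Combining with the factor $-\frac{1}{p+1}$ and adding to the quadratic part produces the coefficient $\frac12 - \frac{1}{p+1}$ in front of $k a_1(a(\xi_0) + \partial_\nu a(\xi_0)t\ep)$ and the stated interaction terms, with everything else absorbed into $o(\ep)$.

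The main obstacle is bookkeeping the competing small quantities and making sure the error is genuinely $o(\ep)$: one must verify that $\delta_i = \ep^{(n-1+2(i-1))/(n-2)}d_i$ makes every interaction integral $\intal U_l^p U_{l\pm1}$ and every projection-error integral of the precise order $\ep(1+o(1))$ — not larger — and that the ``$o(1)$'' contributions from regions where two bubbles overlap only at their tails, as well as the contribution of the outermost region $\Omega\setminus B(\xi_k,\rho\ep)$ (where $V$ is uniformly $O(\ep^{\text{something}})$), are of strictly smaller order than $\ep$. This is exactly where the restriction $n\ge4$ enters (cf. Remark \ref{rmk_pre}): the exponent comparisons that give $\ep^{n(n+6)/(n+2)^2}$ and similar quantities must beat $\ep$, and for $n=3$ some of these are borderline. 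The first-order Taylor expansion of $a$ around $\xi_0$ combined with the symmetry assumption (a3) is what guarantees the linear-in-$\ep$ coefficient is exactly $\partial_\nu a(\xi_0)$ times the translation amount $\ep t$, with no spurious tangential contribution; I would invoke the estimates collected in Appendix \ref{appendix_W_PW} and the lemmas referenced there (Lemma \ref{lemma_pre_3.5} and the interaction estimates \eqref{s23_4}, \eqref{s3_1.5}) to keep the computation short.
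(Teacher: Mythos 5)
Your proposed approach is essentially the paper's: expand $\tfrac12\intom a|\nabla V|^2$ via $\la PU_i,PU_j\ra$, expand $\tfrac1{p+1}\intom a|V|^{p+1}$ over the annuli $A_l$ from \eqref{A_l}, extract the leading $a_1$ and $\partial_\nu a(\xi_0)\,t\ep$ contributions by a change of variables and Taylor expansion of $a$ under (a3), read off the boundary reflection $(d_1/2t)^{n-2}$ from the $PU_1-U_1$ term, and pick up the nearest-neighbour interaction integrals giving the $(d_{i+1}/d_i)^{(n-2)/2}(1+s_i^2)^{-(n-2)/2}$ factors. Two small caveats: (i) to combine the quadratic and nonlinear parts cleanly, the paper needs a symmetry step (its \eqref{s3_1}, proved by integration by parts against $-\Delta PU_i=U_i^p$) showing $\intal aU_l^pU_i=\intai aU_i^pU_l+o(\ep)$; your sketch gestures at ``the same interaction integrals'' without making this explicit, and it is not automatic because the $A_l$'s are at different scales; (ii) the restriction $n\ge4$ is in fact \emph{not} used in this lemma — Remark \ref{rmk_pre} locates it in the $C^1$-estimate for $r=t$ (estimate \eqref{C^1_3.5} in Lemma \ref{lemma_C^1_4}), and the exponents you cite, such as $\ep^{n(n+6)/(n+2)^2}$, already beat $\ep$ for $n=3$, so they never become borderline.
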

\begin{proof}
Using  the definition of the annuli $A_i$ ($i = 1, \cdots, k$) in \eqref{A_l}, we write
\begin{equation}\label{s2}
\begin{aligned}
{1 \over 2} \intom a|\nabla V|^2 &= {1 \over 2} \slk \intom a |\nabla PU_l|^2 + \sum_{l < i} (-1)^{l+i} \intom a \nabla PU_l \cdot \nabla PU_i \\
&= {1 \over 2} \slk \left[\intal a U_l^{p+1} + \intom a U_l^p (PU_l -U_l) + \int_{\Omega \setminus A_l} a U_l^{p+1} - \intom (\nabla a \cdot \nabla PU_l) PU_l \right]\\
&\ + \sum_{l < i} (-1)^{l+i} \left[\intal a U_l^pU_i + \intom a U_l^p (PU_i -U_i) + \int_{\Omega \setminus A_l} a U_l^p U_i - \intom (\nabla a \cdot \nabla PU_l) PU_i \right]
\end{aligned}
\end{equation}
and
\begin{equation}\label{s3}
\begin{aligned}
&~{1 \over p + 1} \intom a|V|^{p+1} dx\\
&= {1 \over p + 1} \slk \intal a \bigg|\sik (-1)^{i+1} PU_i \bigg|^{p+1} + {1 \over p + 1} \int_{\Omega \setminus B(\xi_k, \rho \ep)} a\bigg|\sik (-1)^{i+1} PU_i \bigg|^{p+1}\\
&= {1 \over p + 1} \slk \intal a \bigg(\bigg|(-1)^{l+1} PU_l + \sum_{i \ne l} (-1)^{i+1} PU_i \bigg|^{p+1} - U_l^{p+1} \bigg) + {1 \over p + 1} \slk \intal a U_l^{p+1} + o(\ep)\\
&= \slk \bigg[ {1 \over p + 1} \intal a U_l^{p+1} + \intal aU_l^p(PU_l - U_l) \bigg] + \sum_{i \ne l} (-1)^{i+l} \bigg[ \intal aU_l^p U_i + \intal aU_l^p (PU_i-U_i) \bigg]\\
&~+ p\int_0^1 (1-\theta) \intal a \bigg| (-1)^{l+1} U_l + \theta\Big[ (-1)^{l+1}(PU_l-U_l) + \sum_{i \ne l} (-1)^{i+1} PU_i\Big]\bigg|^{p-1}\\
&\hspace{200pt}\bigg((-1)^{l+1}(PU_l-U_l) + \sum_{i \ne l} (-1)^{i+1} PU_i\bigg)^2 dx d\theta + o(\ep).
\end{aligned}
\end{equation}

First of all, we claim that
\begin{equation}\label{s3_1}
\sum_{i \ne l} (-1)^{i+l} \intal aU_l^p U_i = 2 \sum_{l < i} (-1)^{l+i} \intal a U_l^pU_i + o(\ep).
\end{equation}
 Indeed, suppose $l > i$. By the fact that $-\Delta PU_i = U_i^p$ in $\Omega$ and $PU_i = 0$ on $\po$, it follows that
\begin{equation}\label{s3_-1}
\begin{aligned}
\intal U_l^pU_i &= \intom \nabla PU_l \cdot \nabla PU_i - \intom U_l^p(PU_i - U_i) - \int_{\Omega \setminus A_l} U_l^pU_i\\
&= \intai U_i^pU_l + \intom U_i^p(PU_l - U_l) + \int_{\Omega \setminus A_i} U_i^pU_l - \intom U_l^p(PU_i - U_i) - \int_{\Omega \setminus A_l} U_l^pU_i.
\end{aligned}
\end{equation}
By Lemma A.1 and A.2 (see  also \eqref{s23_3}) we deduce
\[\intom U_i^p(PU_l - U_l),\ \intom U_l^p(PU_i - U_i) = o(\ep),\]
\begin{equation}\label{s3_0}
\begin{aligned}
&\ \int_{\Omega \setminus A_l} U_i^pU_l\\
&\le \( {\delta_l \over \delta_i} \)^{n-2 \over 2} \left[ \int_{B\(0, \sqrt{\delta_{l-1} \over \delta_l}\)^c} + \int_{B\(0, \sqrt{\delta_{l+1} \over \delta_l}\)} \right]{\alpha_n^{p+1}  \over (1 + |y - s_i \nu(\xi_0)|^2)^{n+2 \over 2}} {1 \over |y - (\delta_l /\delta_i) s_l \nu(\xi_0)|^{n-2}}\ dy\\
&= o(\ep)
\end{aligned}
\end{equation}
and
\begin{align*}
&\ \int_{\Omega \setminus A_l} U_l^pU_i\\
&\le \left[ \int_{B\(\xi_k, \sqrt{\delta_{l-1}\delta_l}\)^c} + \int_{B\(\xi_k, \sqrt{\delta_l\delta_{l+1}}\)} \right] {\alpha_n^{p+1} \delta_l^{n+2 \over 2} \over (\delta_l^2 + |x -\xi_k -s_l\delta_l\nu(\xi_0)|^2)^{n+2 \over 2}}{\delta_i^{n-2 \over 2} \over (\delta_i^2 + |x -\xi_k -s_i\delta_i\nu(\xi_0)|^2)^{n-2 \over 2}}\ dx\\
&\le \alpha_n^{p+1} \(\delta_l \over \delta_i \)^{n-2 \over 2} \left[ \int_{B\(0, \sqrt{\delta_{l-1} \over \delta_l}\)^c} + \int_{B\(0, \sqrt{\delta_{l+1} \over \delta_l}\)} \right] {1 \over (1 + |y -s_l\nu(\xi_0)|^2)^{n+2 \over 2}}\ dy = o(\ep).
\end{align*}
Therefore, equation \eqref{s3_-1} can be rewritten as
\begin{equation}\label{symm}
\intal U_l^pU_i = \intai U_i^pU_l + o\(\ep\).
\end{equation}
Moreover, we have the estimates
\begin{equation}\label{symm2}
\intal (a(x)-a(\xi_0)) U_l^p U_i dx,\ \intai (a(x)-a(\xi_0)) U_i^p U_l dx = o(\ep).
\end{equation}
By \eqref{symm} and \eqref{symm2}, we deduce  that
\begin{align*}
\intal aU_l^p U_i &= \left[a(\xi_0) \intal U_l^p U_i + \intal (a(x)-a(\xi_0)) U_l^p U_i dx\right]\\
&= a(\xi_0) \intal U_l^p U_i + o(\ep) = a(\xi_0) \intal U_i^p U_l + o(\ep) = \intai aU_i^p U_l + o(\ep),
\end{align*}
which in particular implies \eqref{s3_1}.

Next, we claim that the term $I := p\int_0^1(1-\theta)\intal \cdots$ in \eqref{s3} is of order $o(\ep)$.
Indeed, we first remark that
\begin{equation}\label{s3_1.5}
\intal |PU_l-U_l|^{p+1} = O\( \ep^{n \over n-2} \) \quad \text{and} \quad \intal U_i^{p+1} = O\( \ep^{n \over n-2}\) \quad \text{for } i \ne l.
\end{equation}
where the first equality is obtained in the proof of Lemma \ref{lemma_pre_3.5}  and the second one is deduced in (6.19) of \cite{MP}. Moreover, by \eqref{s23_3} and \eqref{s23_4}, we deduce
\[\intal U_l^p |PU_l-U_l|,\ \intal U_l^p U_i = O(\ep) \quad \text{if } i \ne l.\]
By these estimates, we get
\begin{equation}\label{s3_2}
\begin{aligned}
I &\le  C \left[ \intal U_l^{p-1}|PU_l-U_l|^2 + \intal |PU_l-U_l|^{p+1} + \sum_{i \ne l} \intal U_i^{p-1}|PU_l-U_l|^2 + \sum_{i \ne l}\intal |PU_l-U_l|^{p-1}U_i^2 \right]\\
&\ + C \left[ \sum_{i \ne l}\intal U_l^{p-1}U_i^2 + \sum_{i \ne l}\sum_{j \ne l} \intal U_i^{p-1}U_j^2 \right]\\
& \le C \left[ \(\intal U_l^p |PU_l-U_l| \)^{4 \over n+2}\(\intal |PU_l-U_l|^{p+1}\)^{n-2 \over n+2} + \intal |PU_l-U_l|^{p+1} \right.\\
&\ \left. + \sum_{i \ne l} \(\intal U_i^{p+1}\)^{2 \over n}\(\intal |PU_l-U_l|^{p+1}\)^{n-2 \over n} + \sum_{i \ne l}\(\intal |PU_l-U_l|^{p+1}\)^{2 \over n}\(\intal U_i^{p+1}\)^{n-2 \over n} \right]\\
&\ + C \left[\(\intal U_l^p U_i\)^{4 \over n+2}\(\intal U_i^{p+1}\)^{n-2 \over n+2} + \sum_{i \ne l}\sum_{j \ne l} \(\intal U_i^{p+1}\)^{2 \over n} \(\intal U_j^{p+1}\)^{n-2 \over n} \right]\\
& \le C \left[ \ep^{n+4 \over n+2} + \ep^{n \over n-2} \right] + C\left[ \ep^{n+4 \over n+2} + \ep^{n \over n-2} \right] = O\( \ep^{n+4 \over n+2}\) = o(\ep)
\end{aligned}
\end{equation}
for some constant $C > 0$.

Finally, by \eqref{symm2}, \eqref{s3_1} and \eqref{s3_2}, we get
\begin{equation}\label{s3_3}
{1 \over p + 1} \intom a|V|^{p+1} dx = \slk \bigg[ {1 \over p + 1} \intal a U_l^{p+1} + \intal aU_l^p(PU_l - U_l) \bigg] + 2\sum_{i < l} (-1)^{i+l} \intal aU_l^p U_i+ o(\ep).
\end{equation}

Moreover, by \eqref{s2}, \eqref{s3_0}, \eqref{symm2}, \eqref{s3_1.5}, \eqref{s3_3} and the estimate
\[\intom (\nabla a \cdot \nabla PU_l) PU_i = o(\ep) \quad \text{for } i,\ l = 1, \cdots, k\]
which is easily deduced by   Lemma \ref{lemma_pre_3}, we find  that
\begin{equation}\label{s23_1}
\begin{aligned}
&\ {1 \over 2} \intom a|\nabla V|^2 - {1 \over p + 1} \intom a|V|^{p+1} dx \\
&= \({1 \over 2}- {1 \over p+1}\) \slk \intal a U_l^{p+1} - {1 \over 2} \slk \intom a U_l^p (PU_l -U_l) + \sum_{l < i} (-1)^{l+i+1} \intal a U_l^pU_i + o(\ep).
\end{aligned}
\end{equation}

Now, we estimate each term   in the right-hand side of the above equality.
Firstly, we write the first term as
\[\intal a U_l^{p+1} = a(\xi_0) \intal U_l^{p+1} + \intal \(a(x)-a(\xi_0)\) U_l^{p+1} dx\]
and then we estimate
\begin{align*}
a(\xi_0) \intal U_l^{p+1}
&= a(\xi_0) a_1 - a(\xi_0) \alpha_n^{p+1} \left[ \int_{B\(\xi_k, \sqrt{\delta_{l-1} \over \delta_l}\)^c} + \int_{B\(\xi_k, \sqrt{\delta_{l} \over \delta_{l+1}}\)}\right]
{\delta_l^n \over (\delta_l^2 + |x - \xi_k - \delta_ls_l\nu(\xi_0)|^2)^n}\ dx\\
&= a(\xi_0) a_1 - a(\xi_0) \alpha_n^{p+1} \left[ \int_{B\(0, \sqrt{\delta_{l-1} \over \delta_l}\)^c} + \int_{B\(0, \sqrt{\delta_{l+1} \over \delta_{l}}\)}\right] {dy \over (1 + |y - s_l\nu(\xi_0)|^2)^n}\\
&= a(\xi_0) a_1 +o(\ep)
\end{align*}
and
\[\intal \(a(x)-a(\xi_0)\) U_l^{p+1} dx = \partial_{\nu}a(\xi_0) a_1 t \ep + o(\ep).\]
(cf. \cite[Lemma C.1]{ACP}). This shows that
\begin{equation}\label{s23_2}
\intal a U_l^{p+1} = a(\xi_0) a_1 + \partial_{\nu}a(\xi_0) a_1 t \ep + o(\ep).
\end{equation}
Secondly, by Lemma \ref{lemma_pre_1} and Lemma \ref{lemma_pre_2} (using the the mean value theorem) we deduce
\begin{equation}\label{s23_3}
\begin{aligned}
\intom a U_l^p (PU_l -U_l) &= -\alpha_n \delta_l^{n-2 \over 2} \intom aU_l^p H(\cdot,\xi_l) + o(\ep)\\
& = -\alpha_n^{p+1} \delta_l^{n-2} a(\xi_0) \int_{B(0, \rho \ep \delta_l^{-1})} {1\over (1 + |y|^2)^{n+2 \over 2}} \left[{1 \over (2\ep t)^{n-2}} + O\({\delta_l(1+|y|) \over \ep^{n-1}}\)\right]dy + o(\ep)\\
&= -\delta_{l1} \cdot \left\{a(\xi_0) a_2 \({d_1 \over 2t}\)^{n-2}\right\} \cdot \ep + o(\ep)
\end{aligned}
\end{equation}
where $\delta_{ij}$ is the Kronecker delta (cf. \cite[Lemma C.2 (64)]{ACP}).

 Finally, for $l < i$, we get
\begin{equation}\label{s23_4}
\begin{aligned}
&\ \intal a U_l^pU_i\\
&= a(\xi_0) \intal U_l^pU_i + \intal \(a(x)-a(\xi_0)\) U_l^pU_i dx\\
&= \( {\delta_i \over \delta_l} \)^{n-2 \over 2} \int_{B\(0, \sqrt{\delta_{l-1} \over \delta_{l}}\) \setminus B\(0, \sqrt{\delta_{l+1} \over \delta_{l}}\)} {a(\xi_0) \alpha_n^{p+1}  \over (1 + |y - s_l \nu(\xi_0)|^2)^{n+2 \over 2}} {dy \over [ (\delta_i/\delta_l)^2 + |y - (\delta_i /\delta_l) s_i \nu(\xi_0)|^2]^{n-2 \over 2}}+ o(\ep)\\
&= \delta_{i(l+1)} a(\xi_0) \({d_{l+1} \over d_l} \)^{n-2 \over 2} F(s_l) \ep + o(\ep).
\end{aligned}
\end{equation}

Here
\begin{equation}\label{F_s}
F(s) := \alpha_n^{p+1} \int_{\rn} {1 \over (1 + |y|^2)^{n+2 \over 2}} {1 \over |y + s \nu(\xi_0)|^{n-2}} dy= \alpha_n^{p+1}|B^n| {1 \over (1 + s^2)^{n-2 \over 2}}.
\end{equation}
The last equality follows from the fact that $U = U_{1,0}$ solves the equation $-\Delta U=U^p$ in $\rn$ and so it can be rewritten using the Green's representation formula
\[U(x)= {1 \over n(n-2)|B^n|}\int_{\rn} U^{p}(y){1 \over |y-x|^{n-2}} dy,\]
which implies $F(s)=\alpha_n^p|B^n| U\left(s\nu(\xi_0)\right).$

By combining \eqref{s23_2}, \eqref{s23_3} and \eqref{s23_4} with \eqref{s23_1}, estimate   \eqref{s23} follows.
\end{proof}

\begin{lemma}\label{lemma_C^0_3}
It holds true that
\begin{equation}\label{s4}
\begin{aligned}
&\ {1 \over p + 1} \intom a|V|^{p+1} - {1 \over p + 1 -\ep} \intom a |V|^{p+1-\ep}\\
&= - a(\xi_0) {k(n+k-2) \over 2(p+1)}\cdot a_1 \ep \log \ep + a(\xi_0) \left[{k a_3 \over p+1} -{k a_1 \over (p+1)^2} - {(n-2)^2\over 4n}\cdot a_1 \sik \log d_i\right]\ep + o(\ep).
\end{aligned}
\end{equation}
\end{lemma}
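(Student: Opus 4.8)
The plan is to expand the $\ep$‑dependence of both the exponent $p+1-\ep$ and the factor $\tfrac{1}{p+1-\ep}$ in powers of $\ep$, thereby reducing \eqref{s4} to the asymptotic evaluation of the two exponent‑independent integrals $\intom a|V|^{p+1}$ and $\intom a|V|^{p+1}\log|V|$, each of which gets multiplied by $\ep$. Concretely, I will use $\tfrac{1}{p+1-\ep}=\tfrac{1}{p+1}+\tfrac{\ep}{(p+1)^2}+O(\ep^2)$ together with Taylor's theorem applied to $t\mapsto|V|^{p+1-t}$, namely $|V|^{p+1-\ep}=|V|^{p+1}-\ep|V|^{p+1}\log|V|+O\big(\ep^2|V|^{p+1-\theta\ep}(\log|V|)^2\big)$ for some $\theta\in(0,1)$, to obtain
\begin{multline*}
\frac{1}{p+1}\intom a|V|^{p+1}-\frac{1}{p+1-\ep}\intom a|V|^{p+1-\ep}\\
=\frac{\ep}{p+1}\intom a|V|^{p+1}\log|V|-\frac{\ep}{(p+1)^2}\intom a|V|^{p+1}+\mathcal{E}_\ep,
\end{multline*}
where the remainder $\mathcal{E}_\ep$ will be controlled exactly as the term $R_1$ in Section~\ref{sec_error}: by \eqref{cont_2} and the crude bound $\intom\big(U_i^{p+1-\sigma}+U_i^{p+1+\sigma}\big)=O(\ep^{-\sigma''})$ one gets $|\mathcal{E}_\ep|=O(\ep^{2-\sigma'})=o(\ep)$ for small $\sigma,\sigma',\sigma''>0$. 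Hence it suffices to compute $\intom a|V|^{p+1}$ and $\intom a|V|^{p+1}\log|V|$ up to errors $o(1)$.

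For $\intom a|V|^{p+1}$ only the leading constant is needed. Using the annular decomposition $\Omega=\big(\bigcup_{l=1}^k A_l\big)\cup\big(\Omega\setminus B(\xi_k,\rho\ep)\big)$ of \eqref{A_l} and expanding $|V|^{p+1}$ on $A_l$ around the dominant bubble $(-1)^{l+1}U_l$, the projection and interaction estimates already used in the proof of Lemma~\ref{lemma_C^0_2} (all cross terms, $PU_l-U_l$ and $a(x)-a(\xi_0)$ being $O(\ep)$ on each $A_l$, the outer region being $o(\ep)$) will give $\intom a|V|^{p+1}=k\,a(\xi_0)a_1+o(1)$; after multiplication by $-\ep/(p+1)^2$ this yields the $-\tfrac{ka_1}{(p+1)^2}$ contribution in \eqref{s4}.

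The core of the argument is $\intom a|V|^{p+1}\log|V|$. On $A_l$ the bubble $(-1)^{l+1}U_l$ is the largest, so $|V|$ agrees with $U_l$ up to lower order there; performing the rescaling $y=(x-\xi_l)/\delta_l$ one has $U_l^{p+1}\,dx=\alpha_n^{p+1}(1+|y|^2)^{-n}\,dy$ and $\log U_l=-\tfrac{n-2}{2}\log\delta_l+\log\tfrac{\alpha_n}{(1+|y|^2)^{(n-2)/2}}$. I will then check that replacing $a$ by $a(\xi_0)$, extending the integration to $\rn$, discarding the outer region (where $V$ is a positive power of $\ep$ while $\log|V|$ grows only logarithmically), and replacing $V$ by $(-1)^{l+1}U_l$ each cost only $o(1)$ — the corrections from $PU_l-U_l$, from interactions $U_l^aU_i^b$, and from $a(x)-a(\xi_0)$ being handled by the same Hölder arguments and Appendix~A estimates behind \eqref{s23_3}, \eqref{s23_4} and \eqref{s3_1.5}. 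Recalling the definitions of $a_1$ and $a_3$, this gives
\[
\intom a|V|^{p+1}\log|V|=a(\xi_0)\slk\left(-\frac{n-2}{2}(\log\delta_l)\,a_1+a_3\right)+o(1).
\]
Finally I will substitute $\delta_l=\ep^{(n-1+2(l-1))/(n-2)}d_l$ from \eqref{conc_para}, so that $-\tfrac{n-2}{2}\log\delta_l=-\tfrac{n-1+2(l-1)}{2}\log\ep-\tfrac{n-2}{2}\log d_l$, sum over $l$ using $\sum_{l=1}^k\big(n-1+2(l-1)\big)=k(n-1)+k(k-1)=k(n+k-2)$, and simplify the constants via $p+1=\tfrac{2n}{n-2}$ (in particular $\tfrac{n-2}{2(p+1)}=\tfrac{(n-2)^2}{4n}$); assembling the three pieces produces exactly \eqref{s4}.

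The step I expect to be the main obstacle is precisely this last integral: since the weight $\log|V|$ is of size $|\log\ep|$ over the bulk of each annulus $A_l$, one must verify carefully that every ``correction'' — the boundary correction $PU_l-U_l$, the interactions between distinct bubbles, and the variation $a(x)-a(\xi_0)$ — contributes $o(1)$, and not merely $O(\log\ep)$ or $O(\ep\log\ep)$, so that the coefficient of $\ep\log\ep$ is determined solely by the sum of the blow‑up exponents and is not polluted. This is routine but delicate bookkeeping, and is exactly where the annular partition of \cite{MP} and the sharpened estimates of Appendix~A are essential.
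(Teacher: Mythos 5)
Your proposal is correct and follows the same route as the paper's proof: expand the $\epsilon$-dependence by Taylor's theorem to reduce \eqref{s4} to the two $\epsilon$-independent integrals $\intom a|V|^{p+1}$ and $\intom a|V|^{p+1}\log|V|$, evaluate the latter over the annuli $A_l$ of \eqref{A_l} where $(-1)^{l+1}U_l$ dominates, rescale, and substitute $\delta_l=\epsilon^{(n-1+2(l-1))/(n-2)}d_l$ together with $\sum_{l=1}^k(n-1+2(l-1))=k(n+k-2)$ and $\tfrac{n-2}{2(p+1)}=\tfrac{(n-2)^2}{4n}$. As a minor remark, your intermediate identity $\intom a|V|^{p+1}\log|V|=a(\xi_0)\sum_{l=1}^{k}\big(-\tfrac{n-2}{2}a_1\log\delta_l+a_3\big)+o(1)$ is the correct one: the paper's display \eqref{step_4_2} carries a stray factor $1/(p+1)$ on its right-hand side (a misprint inherited from already multiplying by the prefactor in \eqref{s4_1}), but both versions lead to the same final statement \eqref{s4}.
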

\begin{proof}
By the Taylor expansion we deduce
\begin{equation}\label{s4_1}
\begin{aligned}
&{1 \over p + 1} \intom a|V|^{p+1} - {1 \over p + 1 -\ep} \intom a |V|^{p+1-\ep}\\
&= \left[{1 \over p+1} \intom a \left| \sik (-1)^{i+1} PU_i \right|^{p+1} \log \left| \sik (-1)^{i+1} PU_i \right|
- {1 \over (p+1)^2} \intom a \left| \sik (-1)^{i+1} PU_i \right|^{p+1} \right] \ep + o(\ep).
\end{aligned}
\end{equation}
Arguing as in the proof of the previous lemma we get
\begin{equation}\label{step_4_1}
\intom a \left| \sik (-1)^{i+1} PU_i \right|^{p+1} =  a(\xi_0) ka_1 + o(1).
\end{equation}
Moreover, we have
\begin{equation}\label{step_4_2}
\begin{aligned}
&\ \intom a \left| \sik (-1)^{i+1} PU_i \right|^{p+1} \log \left| \sik (-1)^{i+1} PU_i \right|\\
&= \sum_{j=1}^k\int_{A_j} a \left| \sik (-1)^{i+1} U_i \right|^{p+1} \log \left| \sik (-1)^{i+1} U_i \right| + o(1) \\
&= - a(\xi_0) {k(n+k-2) \over 2(p+1)}\cdot a_1 \log \ep + a(\xi_0) \left[{k a_3 \over p+1} - {(n-2)^2\over 4n}\cdot a_1 \sik \log d_i\right] + o(1).
\end{aligned}
\end{equation}
By combining \eqref{s4_1}, \eqref{step_4_1} and \eqref{step_4_2},  \eqref{s4} follows.

\medskip
Let us prove \eqref{step_4_2}.

To get the first equality, it is sufficient to show that
\begin{equation}\label{step_3_2_1}
\intom a \left| \sik (-1)^{i+1} PU_i \right|^{p+1} \log \left| \sik (-1)^{i+1} PU_i \right| = \intom a \left| \sik (-1)^{i+1} U_i \right|^{p+1} \log \left| \sik (-1)^{i+1} U_i \right| + o(1)
\end{equation}
and
\begin{equation}\label{step_3_2_2}
\int_{\Omega \setminus B(\xi_k, \rho\ep)} a \left| \sik (-1)^{i+1} U_i \right|^{p+1} \log \left| \sik (-1)^{i+1} U_i \right| = o(1).
\end{equation}

\medskip
If we write
\[V := \sik (-1)^{i+1} PU_i, \quad E := \sik (-1)^{i+1} (U_i-PU_i) \quad \text{and} \quad g(s) := |s|^{p+1}\log|s| \text{ for } s \ne 0,\]
then we see that
\begin{align*}
&\ \intom a \cdot |g(V+E) - g(V)| dx\\
&\le C \intom \int_0^1\(|V+\theta E|^{p + \sigma} + |V+\theta E|^{p - \sigma} + |V+\theta E|^p\) \cdot |E|\ d\theta dx
\quad \text{(by (a1) and \eqref{cont_2})}\\
&\le C \left[\intom \(|V|^{p + \sigma} + |V|^{p - \sigma} + |V|^p\) \cdot |E| dx + \intom \(|E|^{p + \sigma} + |E|^{p - \sigma} + |E|^p\) dx \right]\\
&= o(1) \quad \text{(by the H\"older inequality and Lemma \ref{lemma_pre_3.5})}
\end{align*}
for some constant $C > 0$. This proves \eqref{step_3_2_1}.

Furthermore, denoting $\widetilde{V} := \sum\limits_{i=1}^k (-1)^{i+1} U_i$, we have
\begin{align*}
\int_{\Omega \setminus B(\xi_k, \rho\ep)} a g\big(\widetilde{V}\big) &\le C \int_{\Omega \setminus B(\xi_k, \rho\ep)} \(|\widetilde{V}|^{p + \sigma} + |\widetilde{V}|^{p - \sigma}\) \le C \sik \int_{\Omega \setminus B(\xi_k, \rho\ep)} \(U_i^{p + \sigma} + U_i^{p - \sigma}\)\\
&\le C \sik \({\delta_i \over \ep}\)^n \left[ \({\delta_i \over \ep^2}\)^{{n-2 \over 2}\sigma} + \({\delta_i \over \ep^2}\) ^{-{n-2 \over 2}\sigma}\right] = o(1),
\end{align*}
which implies \eqref{step_3_2_2}.

Finally, the second equality can be obtained as in (6.39) in \cite{MP}.
\end{proof}

From Lemma \ref{lemma_C^0_1}, \ref{lemma_C^0_2} and \ref{lemma_C^0_3}, we conclude that estimate \eqref{energy_exp} is true in the $C^0$-sense.

\section{Energy expansion: The $C^1$-estimates}\label{sec_expansion_2}
In this section, we will deduce that \eqref{energy_exp} holds $C^1$-uniformly on compact subsets of the admissible set $\Lambda$.

\medskip
Let us denote again $V = V\dt$ and $\phi = \phi^{\ep}\dt$ for the sake of simplicity.
We need to prove that for $\md := (d_1, \cdots, d_k) \in (0, +\infty)^k$ and $\mt := (t, s_1, \cdots, s_{k-1}) \in (0, +\infty) \times \mathbb{R}^{k-1}$,
\begin{equation}\label{s5}
\pr \wje(\md, \mt) = \pr \Phi(\md, \mt) \ep + o(\ep)
\end{equation}
$C^0$-uniformly on compact sets of $\Lambda$ where $\wje$ and $\Phi$ are defined in \eqref{red_energy} and \eqref{phi}, respectively,
and $r$ is one of $d_1, \cdots,\ d_k,\ t,\ s_1, \cdots,\ s_{k-2}$ and $s_{k-1}$.

\subsection{The case $r = d_l\ (l = 1, \cdots, k)$ or $r = s_l\ (l = 1, \cdots, k-1)$}
We decompose $\pr \wje(\md, \mt)$ into
\[\pr \wje(\md, \mt) = J_{\ep}'(V)(\pr V) + [J_{\ep}'(V + \phi)- J_{\ep}'(V)]\pr V + J_{\ep}'(V+\phi)(\pr \phi)\]
and estimate each term.

\begin{lemma}\label{lemma_C^1_1}
It is satisfied that
\begin{equation}\label{s5_1}
J_{\ep}'(V)(\pr V) = \pr \Phi(\md, \mt) \ep + o(\ep) \quad \text{for } r = d_1,\cdots, d_k, s_1, \cdots, s_{k-1}.
\end{equation}
\end{lemma}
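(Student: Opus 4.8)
The plan is to compute $J_{\ep}'(V)(\pr V)$ directly and then match the outcome, piece by piece, with $\pr\Phi$; morally this is just ``$\pr$ of the expansion \eqref{energy_exp}'' (the $(\md,\mt)$-independent part $a(\xi_0)[c_1+c_2\ep-c_3\ep\log\ep]$ has zero $r$-derivative), but the $C^1$-uniformity must be earned by a genuine calculation. First I would make the test function explicit: with $\delta_i=\ep^{(n-1+2(i-1))/(n-2)}d_i$ as in \eqref{conc_para} and $\xi_i=(\ep t+s_i\delta_i)\nu$ as in \eqref{conc_pt} (recall $\xi_0=0$, $\nu=e_n$, $s_k=0$), the chain rule applied to $V=\sik(-1)^{i+1}PU_i$ gives
\[
\pdl V=(-1)^{l+1}\frac{\delta_l}{d_l}\left(P\psi_l^0+s_l P\psi_l^n\right)\ (1\le l\le k),\qquad \psl V=(-1)^{l+1}\delta_l P\psi_l^n\ (1\le l\le k-1).
\]
In every case $\pr V$ is a combination of the projected functions $P\psi_l^0,P\psi_l^n$ with coefficients of size $O(\delta_l)$, and $\|\pr V\|=O(1)$, so it suffices to expand $J_{\ep}'(V)[P\psi_l^j]=\intom a\,\nabla V\cdot\nabla P\psi_l^j-\intom a\,|V|^{p-1-\ep}V\,P\psi_l^j$ for $j\in\{0,n\}$ up to an error $o(\ep/\delta_l)$, and then recombine with these coefficients.

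Second, I would carry out the expansion following the bookkeeping of the $C^0$-estimate (Lemmas \ref{lemma_C^0_2} and \ref{lemma_C^0_3}). Since every $P$-function vanishes on $\po$ and $-\Delta PU_i=U_i^p$, integration by parts gives $\intom a\,\nabla PU_i\cdot\nabla P\psi_l^j=\intom aU_i^pP\psi_l^j-\intom(\nabla a\cdot\nabla PU_i)P\psi_l^j$, the last integral being $o(\ep/\delta_l)$ (using $n\ge4$, as in the estimates behind \eqref{cont_6}). For the nonlinear term I would write $|V|^{p-1-\ep}V=|V|^{p-1}V+(|V|^{p-1-\ep}-|V|^{p-1})V$, localize on the annuli $A_l$ of \eqref{A_l}, and Taylor-expand $|V|^{p-1}V$ around the dominant bubble $(-1)^{l+1}PU_l$ on each $A_l$, symmetrizing the nearest-neighbour interactions as in \eqref{symm}. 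This splits $J_{\ep}'(V)[P\psi_l^j]$ into four groups. \emph{(i)} The diagonal part is, up to sign, $\intom a\,(U_l^p-PU_l^p)P\psi_l^j$; writing $PU_l=U_l-\alpha_n\delta_l^{(n-2)/2}H(\cdot,\xi_l)+(\text{l.o.t.})$, using $H(\xi_l,\xi_l)$ as in \eqref{s23_3}, and noting $\int_{\rn}U^{p-1}\psi^0\neq0$ while $\int_{\rn}U^{p-1}\psi^n=0$ by oddness, one finds that only $l=1$ survives and that this reproduces $\pr$ of $a(\xi_0)c_5(d_1/2t)^{n-2}$. \emph{(ii)} The nearest-neighbour interactions $\intom aU_l^{p-1}U_{l\pm1}P\psi_l^j$ (plus lower-order $\intom aU_{l\pm1}^pP\psi_l^j$), computed through the Green representation of $U$ as in \eqref{s23_4}--\eqref{F_s}, reproduce $\pr$ of $a(\xi_0)c_6\sum_{i=1}^{k-1}(d_{i+1}/d_i)^{(n-2)/2}(1+s_i^2)^{-(n-2)/2}$. \emph{(iii)} The $\ep$-correction contributes $+\ep\intom a\,|V|^{p-1}V\log|V|\,P\psi_l^j$, which localizes to bubble $l$; there $\int_{\rn}U^p\psi^0=0$ makes the $\log\delta_l$ piece drop out, leaving $\int U_l^p\psi_l^0\log U_l=C\delta_l^{-1}+o(\delta_l^{-1})$ with an explicit $C\ne0$, while the $\psi_l^n$-part vanishes to leading order by oddness; after multiplying by $\delta_l/d_l$ this produces $\pr$ of $-a(\xi_0)c_7\sik\log d_i$ (and the constant checks against $c_7=(n-2)^2 a_1/(4n)$ of \eqref{s4}). \emph{(iv)} The contributions of $a(x)-a(\xi_0)$ are $O(\delta_l^2)=o(\ep)$ for $r=d_l,s_l$, since, after discarding the constant part $\partial_\nu a(\xi_0)\ep t$, the leading piece $\partial_\nu a(\xi_0)(x-\xi_0)_n$ pairs against the odd kernels with a surviving factor $\delta_l$ — consistent with $\pdl\Phi$ and $\psl\Phi$ containing no $t$-term. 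Adding the four groups, recombining with the chain-rule coefficients, and using $\delta_l=\ep^{(n-1+2(l-1))/(n-2)}d_l$ (under which all non-surviving powers of $\ep$ become $o(\ep)$) yields \eqref{s5_1}.

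I expect group \emph{(iii)} to be the main obstacle: one must pin down the exact $\delta_l^{-1}$-behaviour of $\int U_l^p\psi_l^0\log U_l$ (and confirm the $\psi_l^n$-analogue is negligible), and then verify that the bookkeeping $\ep\cdot(\delta_l/d_l)\cdot\delta_l^{-1}$ lands precisely on the constant $c_7$ already appearing in \eqref{phi}, i.e. that this $C^1$-computation of the logarithmic term is genuinely compatible with the $C^0$-computation of \eqref{s4}. A secondary but pervasive point is to keep every implicit constant uniform over compact subsets $\Lambda_0$ of $\Lambda$ at each step; this is routine but is exactly why \eqref{s5_1} is asserted only $C^0$-uniformly in $(\md,\mt)$.
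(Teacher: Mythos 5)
Your proposal follows essentially the same route as the paper: compute $\pr V$ by the chain rule, integrate by parts to turn $\nabla PU_i$ into $U_i^p$ plus a $\nabla a$-remainder, localize on the annuli $A_l$, Taylor-expand about the dominant bubble, peel off the $\ep$-correction (yielding the logarithmic term), and match coefficients with $\pr\Phi$ — which is exactly the paper's $T_r^1+T_r^2+T_r^3$ decomposition with the same key inputs (symmetrization as in \eqref{symm}, Green's representation giving $F$, oddness, and $\int_{\rn}U^p\psi^0=0$). One small correction: the bound $\intom(\nabla a\cdot\nabla PU_i)P\psi_l^j=o(\ep/\delta_l)$ is supplied by Lemma~\ref{lemma_pre_3} via Young's inequality and does not need $n\ge4$; the dimensional restriction is only genuinely used in the $t$-derivative estimate \eqref{C^1_3.5} of Lemma~\ref{lemma_C^1_4} (cf.\ Remark~\ref{rmk_pre}).
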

\begin{proof}
Set $p = (n+2)/(n-2)$. We split $J'_\ep(V)(\pr V)$ as
\begin{align*}
&\ J'_\ep(V)(\pr V)\\
&= \intom a \nabla V \cdot \nabla (\pr V) - \intom a |V|^{p-1-\ep} V (\pr V)\\
&= \left[\sik (-1)^{i+1} \intom a \nabla PU_i \cdot \nabla (\pr V) - \intom a |V|^{p-1} V (\pr V) \right]
+ \left[\intom a |V|^{p-1} V (\pr V) - \intom a |V|^{p-1-\ep} V (\pr V) \right]\\
&= \intom a \(\sik (-1)^{i+1} U_i^p - |V|^{p-1}V \) \cdot (\pr V) + \sik (-1)^i \intom (\nabla a \cdot \nabla PU_i) (\pr V)\\
& \hspace{280pt} + \left[\intom a \(|V|^{p-1} V - |V|^{p-1-\ep} V\) \cdot (\pr V) \right]\\
&=: T_r^1 + T_r^2 + T_r^3
\end{align*}
and estimate each $T_r^i$ ($i = 1,2,3$).

Suppose that $r = d_l$ for some $l = 1, \cdots, k$. Note that in this case
\[\pr V = \pdl V = (-1)^{l+1} \pdl PU_l = (-1)^{l+1} \ep^{n-1 + 2(l-1) \over n-2} \cdot P\(\psi_l^0 + s_l \psi_l^n\)\]
where $P: D^{1,2}(\rn) \to \ho$ is the projection operator given by \eqref{proj} and $\psi_l^j := \psi_{\delta_l, \xi_l}^j$ ($j = 0,\ n$) are functions defined as \eqref{pdxn} and \eqref{pdxj}. By simple manipulation, we get
\begin{align*}
T_{d_l}^1 &= \intal a \(\sik (-1)^{i+1} U_i^p - |V|^{p-1}V \) \cdot (-1)^{l+1} \pdl PU_l + o(\ep)\\
&= \intal a \(\left|(-1)^{l+1} U_l\right|^{p-1}(-1)^{l+1} U_l - |V|^{p-1}V \) \cdot (-1)^{l+1} \pdl PU_l+ o(\ep).
\end{align*}
On the other hand, 
by adapting the way to estimate $I$ in the $C^0$-estimation and using \eqref{pre_5_3}, we can deduce that
\[\left| \ep^{n-1 + 2(l-1) \over n-2} \intal a\(\left|(-1)^{l+1} U_l\right|^{p-1}(-1)^{l+1} U_l - |V|^{p-1}V \) \cdot (-1)^{l+1} \big(P\psi_l^j - \psi_l^j\big) \right| = O\(\ep^{n+4 \over n+2}\).\]
Thus by the mean value theorem
\begin{align*}
T_{d_l}^1 &= \intal a \(\left|(-1)^{l+1} U_l\right|^{p-1}(-1)^{l+1} U_l - |V|^{p-1}V \) \cdot (-1)^{l+1} \pdl U_l+ o(\ep)\\
&= p \intal aU_l^{p-1} (U_l- PU_l)\pdl U_l + \sum_{i \ne l} (-1)^{i+l+1} p \intal aU_l^{p-1} PU_i \cdot \pdl U_l + o(\ep)
\end{align*}
From Lemma \ref{lemma_pre_1} and \ref{lemma_pre_2}, it follows that
\begin{align*}
p \intal aU_l^{p-1} (U_l- PU_l)\pdl U_l &= p \intal aU_l^{p-1} (U_l- PU_l) d_l^{-1} \delta_l \psi_l^0 + p \intal aU_l^{p-1} (U_l- PU_l) d_l^{-1}s_l \delta_l \psi_l^n\\
& = \left[\delta_{l1} a(\xi_0) {a_2 \over 2} \partial_{d_1} \({d_1 \over 2t}\)^{n-2} \ep + o(\ep)\right] + o(\ep).
\end{align*}
Furthermore, for $l < i$, we obtain by applying Lemma \ref{lemma_pre_00} in particular that
\begin{align*}
&\ p \intal aU_l^{p-1} PU_i \cdot \pdl U_l\\
&= p \intal aU_l^{p-1} U_i\pdl U_l + o(\ep) = \intal a \(\pdl U_l^p\) U_i + o(\ep)
= \pdl\(\intal aU_l^pU_i\) - \left.\partial_{d_m}\(\int_{A_m} aU_l^pU_i\)\right|_{l=m} + o(\ep)\\
&= \delta_{i(l+1)} a(\xi_0)\alpha_n^{p+1}\ep \cdot \pdl\Bigg[\({d_{l+1} \over d_l}\)^{n-2 \over 2}\int_{B\(0, \sqrt{\delta_{l-1} \over \delta_l}\) \setminus B\(0, \sqrt{\delta_{l+1} \over \delta_l}\)} {1 \over (1+|y - s_l \nu(\xi_0)|^2)^{n+2 \over 2}}\\
&\hspace{160pt} \times {1 \over \big[ (d_{l+1}/d_l)^2 \cdot \ep^{4 \over n-2} + \big|y - (d_{l+1}/d_l) \ep^{2 \over n-2} s_{l+1}\nu(\xi_0)\big|^2 \big]^{n-2 \over 2}}\ dy \Bigg] + o(\ep)\\
&= \delta_{i(l+1)} a(\xi_0) \pdl \({d_{l+1} \over d_l}\)^{n-2 \over 2} F(s_l)\ep + o(\ep)
\end{align*}
where we set $d_{k+1} = 0$ and the function $F$ is defined in \eqref{F_s}.
If $l > i$, through the procedure changing the order of $i$ and $l$ that was conducted in computing \eqref{s3} (see \eqref{s3_-1} and the following computations), we can see
\[p \intal aU_l^{p-1} PU_i \cdot \pdl U_l = a(\xi_0) \delta_{i(l-1)}\pdl \({d_l \over d_{l-1}}\)^{n-2 \over 2} F(s_{l-1}) \ep + o(\ep),\]
letting $F(s_0) = 0$. As a result, it holds that
\begin{equation}\label{tdl_1}
T_{d_l}^1 = a(\xi_0) \left[\delta_{l1} {a_2 \over 2} \partial_{d_1} \({d_1 \over 2t}\)^{n-2} + \pdl \({d_{l+1} \over d_l}\)^{n-2 \over 2} F(s_l) + \pdl \({d_l \over d_{l-1}}\)^{n-2 \over 2} F(s_{l-1})  \right] \ep + o(\ep).
\end{equation}

Employing Lemma \ref{lemma_pre_3}, we can easily show that
\begin{equation}\label{tdl_2}
T_{d_l}^2 = o(\ep),
\end{equation}
so it suffices to compute $T_{d_l}^3$. Clearly
\[T_{d_l}^3 = \ep \intom a|V|^{p-1}V \log|V| \cdot (-1)^{l+1}\ep^{n-1 + 2(l-1) \over n-2} \(\psi_l^0+s_l \psi_l^n\) + o(\ep).\]
Also, utilizing
\begin{equation}\label{int_y_zero}
\int_{\rn} {|y|^2-1 \over (1+|y|^2)^{n+1}} dy = \int_{\rn} {y_n \over (1+|y|^2)^{n+1}} dy = 0,
\end{equation}
Lemma \ref{lemma_pre_00} and performing a similar computation to the derivation of \eqref{step_4_2}, we find
\begin{align*}
&\ \ep \intom a|V|^{p-1}V \log|V| \cdot (-1)^{l+1}\ep^{n-1 + 2(l-1) \over n-2} \psi_l^0\\
&= \ep \sum_{j=1}^k \int_{A_j} a\left|\sik (-1)^{i+1}U_i\right|^{p-1}\(\sik (-1)^{i+1}U_i\) \log\left|\sik (-1)^{i+1}U_i\right| \cdot (-1)^{l+1}\ep^{n-1 + 2(l-1) \over n-2} \psi_l^0 + o(\ep)\\
&= \ep \sum_{j=1}^k \int_{A_j} a U_j^p \log U_j \cdot (-1)^{l+j}\ep^{n-1 + 2(l-1) \over n-2} \psi_l^0 + o(\ep)\\
&= {1 \over p+1}\ a(\xi_0) \ep d_l^{-1}\delta_l \int_{A_l} \(\partial_{\delta_l} U_l^{p+1}\) \log U_l + o(\ep)\\
&= {1 \over p+1}\ a(\xi_0) \ep d_l^{-1}\delta_l \cdot \left[\partial_{\delta_l} \(\int_{A_l} U_l^{p+1} \log U_l\) - \int_{A_l} U_l^p \psi_l^0 - \left. \partial_{\delta_m} \(\int_{A_m} U_l^{p+1} \log U_l \) \right|_{m=l} \right] + o(\ep)\\
&= - {1 \over p+1} \cdot {n-2 \over 2} \cdot a(\xi_0) d_l^{-1} \ep \int_{B\(0, \sqrt{\delta_{l-1} \over \delta_l}\) \setminus B\(0, \sqrt{\delta_{l+1} \over \delta_l}\)} U_l^{p+1} \log U_l + o(\ep)\\
&= - {(n-2)^2\over 4n} a(\xi_0) d_l^{-1} a_1 \ep + o(\ep)
\end{align*}
and
\[\ep \intom a|V|^{p-1}V \log|V| \cdot (-1)^{l+1}\ep^{n-1 + 2(l-1) \over n-2} \psi_l^n = a(\xi_0) \ep \int_{A_l} U_l^p \log U_l \cdot \delta_l \psi_l^n + o(\ep) = o(\ep).\]
Thus
\begin{equation}\label{tdl_3}
T_{d_l}^3 = - {(n-2)^2\over 4n} a(\xi_0) d_l^{-1} a_1 \ep + o(\ep).
\end{equation}

Combining \eqref{tdl_1}, \eqref{tdl_2} and \eqref{tdl_3}, we see that
\begin{align*}
&\ J'_\ep(V)(\pr V)\\
&= a(\xi_0) \left[\delta_{l1} {a_2 \over 2} \cdot \partial_{d_1} \({d_1 \over 2t}\)^{n-2} + \left\{\pdl \({d_{l+1} \over d_l}\)^{n-2 \over 2} {\alpha_n^{p+1}|B^n| \over (1 + s_l^2)^{n-2 \over 2}} + \pdl \({d_l \over d_{l-1}}\)^{n-2 \over 2} {\alpha_n^{p+1}|B^n|  \over (1 + s_{l-1}^2)^{n-2 \over 2}} \right\} \right] \ep\\
&\hspace{275pt} - {(n-2)^2\over 4n} a(\xi_0) a_1 \(\pdl \log d_l\) \ep + o(\ep)
\end{align*}
and hence \eqref{s5_1} is valid if $r = d_l$.

\medskip
The case $r = s_l$ for some $l = 1, \cdots, k-1$ can be dealt with in a similar way to the case $r = d_l$. Hence the proof follows.
\end{proof}

\begin{lemma}\label{lemma_C^1_2}
For any $r = d_1,\cdots, d_k, s_1, \cdots, s_{k-1}$, the following holds:
\[[J_{\ep}'(V + \phi)- J_{\ep}'(V)]\pr V = o(\ep).\]
\end{lemma}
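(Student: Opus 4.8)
The plan is to exploit the orthogonality $\phi \in K\dt^\perp$ together with the fact that $\pr V$ lies in $K\dt$. Setting $f_{\ep}(s) := |s|^{p-1-\ep}s$ and using $J_{\ep}'(u)[\psi] = \la u, \psi\ra - \intom a f_{\ep}(u)\psi$, we have
\[[J_{\ep}'(V + \phi) - J_{\ep}'(V)]\pr V = \la \phi, \pr V\ra - \intom a\big(f_{\ep}(V + \phi) - f_{\ep}(V)\big)\pr V .\]
First I would record $\pr V$ explicitly: differentiating \eqref{conc_para} and \eqref{conc_pt} gives $\pdl V = (-1)^{l+1} \ep^{n-1+2(l-1) \over n-2} P(\psi_l^0 + s_l \psi_l^n)$ and $\psl V = (-1)^{l+1} \delta_l P\psi_l^n$, so that $\pr V \in K\dt$ in every case considered; hence $\la \phi, \pr V\ra = 0$. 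The same formulas, together with the scaling identities $\|\psi_l^0\|, \|\psi_l^n\| = O(\delta_l^{-1})$, also give $\|\pr V\| = O(1)$ uniformly on compact subsets of $\Lambda$.

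It then suffices to prove $\intom a(f_{\ep}(V+\phi) - f_{\ep}(V))\pr V = o(\ep)$. I would split $f_{\ep}(V+\phi) - f_{\ep}(V) = (p-\ep)|V|^{p-1-\ep}\phi + N_V(\phi)$, where the quadratic remainder obeys the pointwise bound $|N_V(\phi)| \le C(|V|^{p-2-\ep}\phi^2 + |\phi|^{p-\ep})$ when $p \ge 2$ and $|N_V(\phi)| \le C|\phi|^{p-\ep}$ when $p < 2$. For the $N_V$-contribution, H\"older's inequality---using $(p-1)\tfrac{n}{2} = \tfrac{2n}{n-2}$, whence $\||V|^{p-1}\|_{L^{n/2}(\Omega)} = O(1)$, together with $\|\pr V\|_{L^{2n/(n-2)}(\Omega)} = O(1)$---gives $\big|\intom a N_V(\phi)\pr V\big| = O(\|\phi\|^{\min\{2,p\}})$, which is $o(\ep)$ by the bound $\|\phi\| = o(\sqrt\ep)$ of Proposition \ref{prop_cont} (or, more precisely, by its sharpening $\|\phi\| = O(\ep^{{1\over2}\cdot{n+6\over n+2}})$, which follows from Lemma \ref{lemma_R} and the contraction argument establishing Proposition \ref{prop_cont}).

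The heart of the matter, and the step I expect to be delicate, is the linear-in-$\phi$ term $\intom a|V|^{p-1-\ep}\phi\,\pr V$: a crude H\"older estimate controls it only by $O(\|\phi\|) = o(\sqrt\ep)$, short of the required $o(\ep)$. Here I would pass to the adjoint operator $i^*$. Since $\pr V \in K\dt$ forces $\la \pr V, \phi\ra = 0$, we may write
\[\intom a|V|^{p-1-\ep}\phi\,\pr V = \la i^*\big(|V|^{p-1-\ep}\pr V\big) - \tfrac1p \pr V, \phi \ra ,\]
and the crucial input is the identity $i^*(U_l^{p-1}\psi_l^j) = \tfrac1p P\psi_l^j$, an immediate consequence of the linearized equation $-\Delta \psi_l^j = pU_l^{p-1}\psi_l^j$ from \eqref{eq_of_psi}. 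Consequently $i^*(|V|^{p-1-\ep}\pr V)$ differs from $\tfrac1p\pr V$ only by terms generated by the cross-bubble interactions $|V|^{p-1-\ep} - U_l^{p-1}$, the projection discrepancies $P\psi_l^j - \psi_l^j$, the $O(\nabla\log a)$ correction distinguishing $i^*$ from the unweighted solution operator, and the $\ep$ in the exponent. After taking into account the scaling factors $\ep^{(n-1+2(l-1))/(n-2)}$ (resp.\ $\delta_l$) multiplying $\pr V$, each of these terms has $\|\cdot\|$-norm $O(\sqrt\ep)$ on compact subsets of $\Lambda$---and this is precisely where the Appendix~A estimates (in particular Lemmas \ref{lemma_pre_1}, \ref{lemma_pre_3}, \ref{lemma_pre_7} and \eqref{pre_5_3}) are invoked---so that $\|i^*(|V|^{p-1-\ep}\pr V) - \tfrac1p\pr V\| = O(\sqrt\ep)$ and the linear term is $O(\sqrt\ep\,\|\phi\|) = o(\ep)$. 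Collecting the three estimates yields the lemma, and the case $r = s_l$ is handled exactly as $r = d_l$.
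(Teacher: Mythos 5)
Your decomposition and the key cancellation are essentially what the paper does: you write $[J_\ep'(V+\phi) - J_\ep'(V)]\pr V$ as an inner-product piece plus a Nemytskii difference, split off a linear-in-$\phi$ term, and lean on the linearized equation $-\Delta\psi_l^j = pU_l^{p-1}\psi_l^j$ to kill the leading contribution. The paper reaches the same cancellation by integrating $\intom a\,\nabla\phi\cdot\nabla(\delta_l P\psi_l^j)$ by parts to produce $p\intom a\,\delta_l U_l^{p-1}\psi_l^j\,\phi - \intom \nabla a\cdot\nabla(\delta_l P\psi_l^j)\,\phi$ and then subtracting the $|V|^{p-1}$ term; you reach it by writing the linear term as $\la i^*(\cdot)-\tfrac1p\pr V,\phi\ra$ and using $\phi\in K\dt\pe$. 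These are the same computation in two notations, and the residual pieces you list (cross-bubble terms, $P\psi_l^j-\psi_l^j$, $\nabla\log a$, the $\ep$ in the exponent) correspond exactly to the three terms the paper estimates. One imprecision: $i^*(U_l^{p-1}\psi_l^j)=\tfrac1p P\psi_l^j$ is \emph{not} an identity, because $i^*$ inverts $-\mathrm{div}(a\nabla\cdot)$ while $P$ inverts $-\Delta$; the discrepancy is $\tfrac1p i^*(\nabla\log a\cdot\nabla P\psi_l^j)$. You do list this correction afterwards, but describing the displayed equality as ``an immediate consequence of \eqref{eq_of_psi}'' is not accurate as stated.

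There is a genuine gap in your treatment of the nonlinear remainder. For $n\ge 7$ (so $p<2$) your pointwise bound $|N_V(\phi)|\le C|\phi|^{p-\ep}$ plus H\"older yields only $O(\|\phi\|^p)$. The best bound available from Lemma~\ref{lemma_R} and the contraction is $\|\phi\| = O\big(\ep^{\frac{n+6}{2(n+2)}}\big)$, giving $O(\|\phi\|^p)=O\big(\ep^{\frac{n+6}{2(n-2)}}\big)$, and $\frac{n+6}{2(n-2)}>1$ fails once $n\ge 10$. So, taken literally, your estimate for the $N_V$-term covers only $4\le n\le 9$, whereas the lemma is needed for all $n\ge 4$. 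The paper does not spell this step out either (it defers the analogues $I_2, I_3$ to \cite{MP}), but a self-contained proof at the level of generality you are aiming for requires a finer argument than the global pointwise bound $|N_V(\phi)|\lesssim|\phi|^p$ followed by a single H\"older inequality.
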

\begin{proof}
We consider only when $r = d_l$ here. The case $r = s_l$ is similar. Expand
\begin{align*}
[J_{\ep}'(V + \phi)- J_{\ep}'(V)]\pdl V&= \left[\intom a \nabla \phi \cdot \nabla \pdl V - ap|V|^{p-1}\phi \pdl V\right]\\
&\ - \left[\intom a \left\{|V + \phi|^{p-1-\ep}(V+\phi) - |V|^{p-1-\ep}V - (p-\ep)|V|^{p-1-\ep}\phi \right\} \pdl V \right]\\
&\ + \left[\intom a\left\{p|V|^{p-1} - (p-\ep) |V|^{p-1-\ep}\right\} \phi \pdl V\right]\\
&=: I_1 + I_2 + I_3
\end{align*}
and study each summands.

Let us estimate $I_1$. We have
\[I_1 = \sik \(\intom a \nabla \phi \cdot \nabla \delta_i\(P\psi_i^0 + s_i P\psi_i^n\) - p \intom a |V|^{p-1} \phi \delta_i\(P\psi_i^0 + s_i P\psi_i^n\)\).\]
By \eqref{eq_of_psi} and \eqref{proj},
\begin{multline*}
\intom a \nabla \phi \cdot \nabla \(\delta_i P\psi_i^j\) - p \intom a |V|^{p-1} \phi \(\delta_i P\psi_i^j\) \\
= p \intom a \phi \(U_i^{p-1} - |V|^{p-1}\)\delta_i \psi_i^j - p \intom a\phi|V|^{p-1}\delta_i\big(P\psi_i^j-\psi_i^j\big) - \intom \nabla a \cdot \nabla \(\delta_iP\psi_i^j\) \phi
\end{multline*}
for $j = 0,\ n$, so it suffices to estimate three terms in the right-hand side of the above equality.
Notice that by \eqref{est_phi} and \eqref{s3_1.5}, we have
\begin{align*}
\int_{\Omega \setminus A_i} |\phi| \left|U_i^{p-1} - |V|^{p-1}\right| \big|\delta_i \psi_i^j\big|
&\le \|\phi\| \cdot \(\slk \big\|U_l^{p-1}\big\|_{L^{n \over 2}(\Omega \setminus A_i)}\) \cdot \big\|\delta_i\psi_i^j\big\|_{L^{p+1}(\Omega \setminus A_i)} \\
&= o(\sqrt{\ep})\cdot O(1) \cdot O(\sqrt{\ep}) = o(\ep)
\end{align*}
and
\begin{align*}
&\ \intai |\phi| \left|U_i^{p-1} - |V|^{p-1}\right| \big|\delta_i \psi_i^j\big|\\
&\le \chi \cdot C \intai |\phi| U_i \(|PU_i - U_i|^{p-1} + \sum_{l \ne i} U_l^{p-1}\) +  C \intai |\phi| U_i^{p-1} \(|PU_i-U_i| + \sum_{l \ne i} U_l\)\\
&\le \chi \cdot C \|\phi\| \cdot \|U_i\|_{L^{p+1}(A_i)}\(\left\||PU_i-U_i|^{p-1}\right\|_{L^{n \over 2}(A_i)} + \sum_{l \ne i} \big\|U_l^{p-1}\big\|_{L^{n \over 2}(A_i)}\)\\
&\ + C \|\phi\|\cdot \big\|U_i^{p-1}\big\|_{L^{n \over 2}(A_i)}\(\left\|PU_i-U_i\right\|_{L^{p+1}(A_i)} + \sum_{l \ne i} \|U_l\|_{L^{p+1}(A_i)}\)\\
&= \chi \cdot o(\sqrt{\ep}) \cdot O(1) \cdot O\(\ep^{2 \over n-2}\) + o(\sqrt{\ep})\cdot O(1) \cdot O(\sqrt{\ep}) = o(\ep)
\end{align*}
for some $C>0$ (see \cite[Lemma A.1]{MP}), where $\chi$ is a function such that $\chi = 0$ if $n \ge 6$ and $\chi = 1$ if $n \le 5$.
Furthermore, Lemma \ref{lemma_pre_5} implies
\[\intom |\phi||V|^{p-1}\delta_i\big|P\psi_i^j-\psi_i^j\big| \le \|\phi\|\cdot \|V^{p-1}\|_{L^{n \over 2}(\Omega)}\cdot\big\|\delta_i\big(P\psi_i^j -\psi_i^j\big)\big\|_{L^{p+1}(\Omega)} = o(\sqrt{\ep})\cdot O(1) \cdot O(\sqrt{\ep}) = o(\ep).\]
Finally, by applying Young's inequality (see Subsection \ref{appendix_young}) and \eqref{est_phi}, we observe that
\begin{equation}\label{s5_2}
\left| \intom \nabla a \cdot \nabla \(\delta_iP\psi_i^j\) \phi\right| \le C \delta_i \big\|U_i^{p-1} \psi_i^j\big\|_{L^{2n \over n+4 - \sigma}(\Omega)} \cdot \|\phi\|_{L^{2n \over n-2}(\Omega)} = O\(\delta_i^{1 - {\sigma \over 2}}\) \cdot o(\sqrt{\ep}) = o(\ep)
\end{equation}
where $\sigma > 0$ is a sufficiently small parameter. Therefore $I_1 = o(\ep)$.

Likewise, we can check that $I_2,\ I_3 = o(\ep)$ holds. (Refer to page 29-31 in \cite{MP}.)
\end{proof}

\begin{lemma}\label{lemma_C^1_3}
We have
\[J_{\ep}'(V+\phi)(\pr \phi) = o(\ep) \quad \text{for } r = d_1,\cdots, d_k, s_1, \cdots, s_{k-1}.\]
\end{lemma}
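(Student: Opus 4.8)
The plan is to show that the term $J_{\ep}'(V+\phi)(\pr\phi)$ is negligible by exploiting the fact that $\phi=\phi^{\ep}\dt$ lies in $K\dt\pe$ together with the orthogonality encoded in \eqref{es1}. First I would observe that, since $\phi$ is defined so that equation \eqref{es1} holds, there exist real coefficients $c_i^j=c_i^j(\md,\mt,\ep)$ such that
\[
J_{\ep}'(V+\phi) = \sum_{i=1}^k \sum_{j=0,n} c_i^j\, \la P\psi_i^j, \cdot\, \ra
\]
as a functional on $\mh$; this is the standard Lagrange-multiplier reformulation of the finite-dimensional reduction (it is exactly what is meant by saying $V+\phi$ solves \eqref{es1}). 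Hence $J_{\ep}'(V+\phi)(\pr\phi) = \sum_{i,j} c_i^j \la P\psi_i^j, \pr\phi\ra$, and the problem reduces to estimating the multipliers $c_i^j$ and the pairings $\la P\psi_i^j, \pr\phi\ra$.

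For the multipliers, I would test the identity above against $P\psi_l^m$ itself and use that the Gram matrix $(\la P\psi_i^j, P\psi_l^m\ra)$ is, after suitable rescaling of each $\psi$ by the corresponding $\delta_i$-power, uniformly invertible on compact subsets of $\Lambda$ (this follows from the almost-orthogonality estimates already invoked via Lemma \ref{lemma_pre_5} and Lemma \ref{lemma_pre_4} in the proof of Proposition \ref{prop_linear}). Combined with the bound $J_{\ep}'(V+\phi)(P\psi_l^m) = O(\|R\dt\| + \text{higher order in }\|\phi\|)$, which comes from expanding $J_{\ep}'(V+\phi)$ around $V$ exactly as in Lemma \ref{lemma_C^1_1} and Lemma \ref{lemma_C^1_2} and using $\|R\dt\|=o(\sqrt\ep)$ (Lemma \ref{lemma_R}) and $\|\phi\|=o(\sqrt\ep)$ (Proposition \ref{prop_cont}), one gets that the rescaled multipliers are $o(\sqrt\ep)$ uniformly.

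For the pairings, the key point is that $\phi\in K\dt\pe$ means $\la\phi,P\psi_i^j\ra=0$ identically in $(\md,\mt)$; differentiating this relation with respect to $r$ gives $\la\pr\phi,P\psi_i^j\ra = -\la\phi,\pr(P\psi_i^j)\ra$. Since $\pr(P\psi_i^j)$ is, up to $\delta$-powers, a bounded combination of projected second-derivative bubbles whose norm is controlled (again by the Appendix estimates, cf. \eqref{pre_5_3} and Lemma \ref{lemma_pre_5}), this pairing is $O(\delta_i^{-1}\|\phi\|) = o(\delta_i^{-1}\sqrt\ep)$ after accounting for the $\ep$-power in $\pr$. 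Multiplying the $o(\sqrt\ep)$ multiplier bound by the pairing bound, with the matching $\delta_i$-powers cancelling, yields $J_{\ep}'(V+\phi)(\pr\phi)=o(\ep)$, which is the claim. I would add the brief remark that this argument is the same one used in \cite[page 31]{MP}, so only the points where the weight $a(x)$ or the different concentration rates \eqref{conc_para} enter need to be checked, and those affect only constants, not the orders.

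The main obstacle I anticipate is bookkeeping the different rescalings: because the $\delta_i$ here scale like distinct powers $\ep^{(n-1+2(i-1))/(n-2)}$ rather than a common rate, one must be careful that the $\delta_i$-weights in the multiplier estimate and in the $\la\phi,\pr(P\psi_i^j)\ra$ estimate match up index by index so that the product is genuinely $o(\ep)$ and not merely $o(1)$; the danger is a spurious factor $\delta_{i+1}/\delta_i \to 0$ or its reciprocal appearing with the wrong sign of exponent. Verifying that the Gram matrix remains uniformly nondegenerate after the anisotropic rescaling — which is implicit in Proposition \ref{prop_linear} but must be applied to $\pr\phi$ rather than $\phi$ — is the one place a genuine (though routine) computation is needed.
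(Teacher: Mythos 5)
Your proposal matches the paper's proof: the paper likewise rewrites \eqref{es1} in the Lagrange-multiplier form $S(V+\phi)=-\sum_{i,j}c_{ij}\,\mathrm{div}(a\nabla P\psi_i^j)$, transfers $\pr$ from $\phi$ to $P\psi_i^j$ via the differentiated orthogonality $\la\phi,P\psi_i^j\ra\equiv 0$, and combines the bound $c_{ij}=o(\delta_i\sqrt\ep)$ (obtained by testing against $P\psi_l^m$ and using the near-diagonal Gram matrix of Lemma \ref{lemma_pre_4}) with $\|\pr P\psi_i^j\|=O(\delta_i^{-1})$ from Lemma \ref{lemma_pre_6}. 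The anisotropic-scaling worry you raise does not in fact materialize, because Lemma \ref{lemma_pre_6} gives $\pr P\psi_i^j=0$ unless $r\in\{d_i,s_i\}$, so the $\delta_i$-powers cancel index by index and no cross ratio $\delta_{i+1}/\delta_i$ ever appears.
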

\begin{proof}
We can argue as in the derivation of (7.6) in \cite{MP}. Since we need a by-product that is derived during the proof of the lemma in the next subsection, we briefly sketch the proof.

\medskip
Equation \eqref{es1} reads as
\begin{equation}\label{C^1_1}
S(V + \phi) := -\text{div}(a \nabla (V+\phi)) - a|V+\phi|^{p-1-\ep}(V+\phi) = - \sum_{i=1}^k \left[c_{i0} \cdot \text{div}(a \nabla P\psi_i^0) + c_{in} \cdot \text{div}(a \nabla P\psi_i^n)\right].
\end{equation}
Testing \eqref{C^1_1} with the function $\pr \phi$ and using the fact $\phi \in K\pe$ where $K\pe$ is defined in \eqref{K_perp}, we get
\begin{equation}\label{C^1_2}
J_{\ep}'(V+\phi)(\pr \phi) = \sij c_{ij} \intom a \nabla P\psi_i^n \cdot \nabla(\pr \phi) = - \sij c_{ij} \intom a \nabla \big(\pr P\psi_i^n\big) \cdot \nabla \phi.
\end{equation}
On the other hand, testing \eqref{C^1_1} with the function $P\psi_l^m$ for any fixed $m = 1, \cdots, k$ and $l = 0,\ n$ and applying Lemma \ref{lemma_pre_4} and \ref{lemma_pre_3}, we can check that
\begin{equation}\label{C^1_3}
c_{ij} = o(\delta_i \sqrt{\ep}).
\end{equation}
Since Lemma \ref{lemma_pre_6} and \eqref{est_phi} imply that
\[\left|\intom a \nabla \big(\pr P\psi_i^n\big) \cdot \nabla \phi \right| \le C \big\|\pr P\psi_i^j\big\| \cdot \|\phi\| = o\(\delta_i^{-1}\sqrt{\ep}\)\]
for some $C > 0$, we get the result.
\end{proof}

To sum up, we deduce \eqref{s5} from Lemma \ref{lemma_C^1_1}, \ref{lemma_C^1_2} and \ref{lemma_C^1_3} if $r = d_l \ (l = 1, \cdots, k)$ or $r = s_l \ (l = 1, \cdots, k-1)$.

\subsection{The case $r = t$}
When $r = t$, we have
\[\pr V = \pt V = \sik (-1)^{i+1} \pt PU_i = \sik (-1)^{i+1} \ep \cdot P\psi_l^n.\]
Thus, unlike the previous case $r = d_l$ or $s_l$ where $\pr U_i = O\(\delta_i \(\psi_i^0 + \psi_i^n\)\) = O(U_i)$ holds, $\pt U_i = O(U_i)$ is not true anymore.
In fact, it turns out that this difference makes it hard to obtain \eqref{s5} in a direct way in this case.
Fortunately, we can borrow the idea from \cite{EMP} to overcome this problem, where the authors replaced the term,
in our setting, $\pt V(x) = \ep \sum\limits_{i=1}^k (-1)^{i+1} \partial_{(\xi_i)_n} V(x)$ with $\ep \sum\limits_{i=1}^k (-1)^{i+1} \partial_{x_n} V(x)$ ($x \in \Omega$) in the expansion of the reduced energy functional $\pt \wje$ and used a Pohozaev-type identity to estimate it.
Such an approach was also applied in \cite{MP} successfully.

\begin{lemma}\label{lemma_C^1_4}
We have
\[J_{\ep}'(V+\phi)(\pt V + \pt \phi) =  \pt \Phi(\md, \mt) \ep + o(\ep).\]
\end{lemma}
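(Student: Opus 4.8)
The plan is to follow the strategy of \cite{EMP,MP}: rather than differentiating directly in the concentration parameter $t$, one trades the derivative $\pt V$ for a derivative in the physical variable $x_n$. Writing $\xi_i = (\xi_0 + \ep t\nu) + \delta_i s_i \nu$ with $\nu = e_n$, note that $\pt V(x) = \ep \sik (-1)^{i+1} \partial_{(\xi_i)_n} PU_i(x)$, and that $\partial_{(\xi_i)_n} U_i = -\partial_{x_n} U_i$. The first step is therefore to show that, up to $o(\ep)$, one may replace $\pt V$ by $-\ep\, \partial_{x_n} V$ inside $J_\ep'(V+\phi)(\cdot)$; the difference involves the boundary correction terms $P\psi_i^n - \psi_i^n$ and $PU_i - U_i$ (which enter through the projection $P$), and by the estimates of Appendix A (Lemma \ref{lemma_pre_1}, \ref{lemma_pre_2}, \ref{lemma_pre_5}) together with $\|\phi\| = o(\sqrt{\ep})$ these contribute only $o(\ep)$. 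One also has to handle the $\pt\phi$ term exactly as in Lemma \ref{lemma_C^1_3}: testing \eqref{C^1_1} with $\pt\phi$, using $\phi \in K\pe$ and the bound \eqref{C^1_3} on the Lagrange multipliers $c_{ij} = o(\delta_i\sqrt{\ep})$, together with the estimate $\|\pt P\psi_i^j\| = O(1)$ (the $t$-derivative, unlike the $d_l$- or $s_l$-derivatives, does not produce a factor $\delta_i^{-1}$), one gets $J_\ep'(V+\phi)(\pt\phi) = o(\ep)$.

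The core of the argument is then the computation of $-\ep\, J_\ep'(V+\phi)(\partial_{x_n} V)$. Here I would integrate by parts and exploit a Pohozaev-type identity on a half-ball $B(\xi_0, r_0) \cap \Omega$ for fixed small $r_0$: multiplying the equation $-\mathrm{div}(a\nabla(V+\phi)) = a|V+\phi|^{p-1-\ep}(V+\phi) + \sum_{i,j} c_{ij}\,\mathrm{div}(a\nabla P\psi_i^j)$ by $\partial_{x_n}(V+\phi)$ and integrating over this region, the bulk terms reorganize into boundary integrals over $\partial\Omega$ and over the spherical part $\partial B(\xi_0,r_0)\cap\Omega$. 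On $\partial B(\xi_0,r_0)\cap\Omega$ the solution is of size $O(\ep^{(n-2)/2})$ and its derivatives likewise small, so that piece is $o(\ep)$ after multiplication by $\ep$ is accounted for in the bookkeeping. On $\partial\Omega$ one uses $V+\phi = 0$, so only the normal-derivative term $\frac{a}{2}|\partial_\nu(V+\phi)|^2 \nu_n$ survives; this is where the geometry of $\partial\Omega$ near $\xi_0$ and the symmetry assumption (a3) enter. Expanding $a$ near $\xi_0$ as $a(x) = a(\xi_0) + \partial_\nu a(\xi_0)\,(x-\xi_0,\nu) + O(|x-\xi_0|^2)$ and using $a(\xi_0)$ critical for $a|_{\partial\Omega}$, the leading boundary contribution produces exactly the term $\partial_\nu a(\xi_0)\, c_4\, t$ in $\Phi$, while the interaction of the bubbles with their reflections/projections reproduces the $d_i$- and $s_i$-dependent terms of $\pt\Phi$; the constants $c_4,\dots,c_7$ are the same ones fixed in Proposition \ref{prop_energy_est}, so consistency with the $C^0$-expansion \eqref{energy_exp} and with the $d_l,s_l$-derivatives already computed in Lemma \ref{lemma_C^1_1} pins them down.

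The main obstacle I expect is the careful treatment of the boundary integral over $\partial\Omega$ in the Pohozaev identity: one must expand the normal derivative $\partial_\nu(V+\phi)$ at a curved boundary in terms of the flat-model quantities $\partial_\nu PU_i$, controlling the error coming from the curvature of $\partial\Omega$ near $\xi_0$, from the difference $PU_i - U_i$, and from $\phi$, all to precision $o(\ep)$ — and this is precisely the point where the hypothesis $n \ge 4$ is needed (cf. Remark \ref{rmk_pre}), since for $n = 3$ the correction term $PU_i - U_i$ is too large relative to $\ep$. The bubble–bubble interaction boundary terms also have to be matched against the expansion of $F(s_l)$ and its derivatives from \eqref{F_s}. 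Once these boundary expansions are in hand, collecting all pieces and comparing with \eqref{phi} gives $J_\ep'(V+\phi)(\pt V + \pt\phi) = \pt\Phi(\md,\mt)\,\ep + o(\ep)$, uniformly on compact subsets of $\Lambda$, as claimed.
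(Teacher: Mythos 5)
Your overall scheme — trading $\pt$ for $-\ep\partial_{x_n}$, then running a Pohozaev-type identity and picking off the boundary term — is the right idea and matches the paper's strategy in spirit. However, there is a genuine gap in your treatment of $\pt\phi$. You claim that this can be handled ``exactly as in Lemma \ref{lemma_C^1_3}'' using $\|\pt P\psi_i^j\| = O(1)$, but this estimate is false: Lemma \ref{lemma_pre_6} gives $\|\pt P\psi_i^j\| = O(\ep\delta_i^{-2})$, which blows up. Indeed, the naive bound $|c_{ij}|\cdot\|\pt P\psi_i^j\|\cdot\|\phi\| = o(\delta_i\sqrt\ep)\cdot O(\ep\delta_i^{-2})\cdot o(\sqrt\ep) = o(\ep^2\delta_i^{-1})$ is not $o(\ep)$ once $i\ge 2$, since $\delta_i$ is much smaller than $\ep$. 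This is precisely why the paper does not bound $J_\ep'(V+\phi)(\pt\phi)$ directly but instead also converts it into a $\pxn\phi$ contribution: starting from $J_\ep'(V+\phi)(\pt\phi) = -\sij c_{ij}\intom a\nabla(\pt P\psi_i^j)\cdot\nabla\phi$, one integrates by parts and replaces $\partial_{(\xi_i)_n}$ by $-\partial_{x_n}$ up to $o(\ep)$ errors, and then compares with the identity obtained by testing \eqref{C^1_1} against $\pxn\phi$, which yields $J_\ep'(V+\phi)(\pt\phi) = -\ep\intom S(V+\phi)(\pxn\phi) + o(\ep)$. Only after both $\pt V$ and $\pt\phi$ have been traded for $x_n$-derivatives can one combine them into $J_\ep'(u)(\pt u) = -\ep\intom S(u)(\pxn u) + o(\ep)$ with $u = V+\phi$, which is what feeds the Pohozaev identity.

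Two further remarks. First, your plan integrates the Pohozaev identity over a half-ball $B(\xi_0,r_0)\cap\Omega$; the paper instead applies it over all of $\Omega$, which produces a single boundary integral over $\partial\Omega$ (with the interior spherical boundary absent), and then expands that boundary integral via the Green-function estimates of Appendix A following Rey. Your localization could in principle be made to work, but it is a genuinely different route and would require controlling the spherical boundary piece in terms of the crossover scale $\ep$, an extra layer the paper avoids. Second, you attribute the $n\ge 4$ restriction to the expansion of the boundary normal derivative; the paper's Remark \ref{rmk_pre} shows it actually enters in \eqref{C^1_3.5}, i.e.\ in bounding $c_{ij}\intom\nabla a\cdot\nabla(\pt P\psi_i^j)\phi$ via Lemma \ref{lemma_pre_7} — a term in the very $\pt\phi$ estimate that your plan mishandles.
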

\begin{proof}
As the first step, let us compute $J_{\ep}'(V+\phi)(\pt V)$.
By utilizing \eqref{pre_5_2} and \eqref{pre_2_2}, we get
\[\ep |c_{ij}| \intom a U_i^{p-1} \big|\psi_i^j\big| \big|P\psi_l^n- \psi_l^n\big|,\quad \ep |c_{ij}| \intom a U_i^{p-1} \big|\psi_i^j\big| \big|\partial_{x_n}(PU_l - U_l)\big| = o\(\ep^{3 \over 2}\).\]
Also, the application of \eqref{C^1_3}, the proof of Lemma \ref{lemma_pre_8} and Young's inequality (see Subsection \ref{appendix_young}) gives
\begin{align*}
&\ \ep |c_{ij}| \intom \big|\nabla P\psi_i^j\big|\cdot\left|\partial_{(\xi_l)_n} PU_l + \pxn PU_l\right| \le o\(\delta_i\ep^{3 \over 2}\) \cdot O\({\delta_l^{n-2 \over 2} \over \ep^{n-1}}\) \intom \big|\nabla P\psi_i^j\big|\\
&\le o\(\delta_i \ep^{3 \over 2}\) \cdot O\({\delta_l^{n-2 \over 2} \over \ep^{n-1}}\) \intom \intom {1 \over |x-y|^{n-1}} \big(U_i^{p-1}\psi_i^j\big)(y) dy dx \le o\(\ep^{3 \over 2}\) \cdot O\({\delta_l^{n-2 \over 2} \over \ep^{n-1}}\) \|U_i^p\|_{L^1(\Omega)}\\
&= o\(\sqrt{\ep}\) \cdot O\({\delta_i^{n-2 \over 2}\delta_l^{n-2 \over 2} \over \ep^{n-2}}\)
= o\(\ep^{3 \over 2}\)
\end{align*}
Hence
\begin{align*}
&\ J_{\ep}'(V+\phi)(\pt V)\\
&= p \ep \sij c_{ij} \intom a U_i^{p-1} \psi_i^j \(\slk (-1)^{l+1} \partial_{(\xi_l)_n} PU_l\)
- \ep \sij c_{ij} \intom \nabla a \cdot \nabla P\psi_i^j \(\slk (-1)^{l+1} \partial_{(\xi_l)_n} PU_l\)\\
&= p\ep \sum_{i,j,l} (-1)^{l+1} c_{ij} \intom a U_i^{p-1} \psi_i^j \left[\big(P\psi_l^n- \psi_l^n\big) + \partial_{x_n}(PU_l - U_l) - \partial_{x_n} PU_l\right]\\
&\hspace{100pt} - \ep \sij c_{ij} \intom \nabla a \cdot \nabla P\psi_i^j \left[\slk (-1)^{l+1} \left\{\(\partial_{(\xi_l)_n} PU_l + \pxn PU_l\) - \pxn PU_l\right\}\right] \\
&= - \intom S(V+\phi) (\pxn V)\ep + o(\ep).
\end{align*}

To estimate $J_{\ep}'(V+\phi)(\pt \phi)$, we observe that \eqref{C^1_2} implies
\[J_{\ep}'(V+\phi)(\pt \phi) = \sij c_{ij} \intom a\big(\Delta \pt P\psi_i^j\big) \phi + \sij c_{ij} \intom \nabla a \cdot \nabla \big(\pt P\psi_i^j\big) \phi.\]
Since it holds that
\begin{align*}
\intom a \pt\big(\Delta P\psi_i^j\big) \phi &= - \intom pa \pt\big(U_i^{p-1}\psi_i^j\big)\phi = - p \ep \intom a \partial_{(\xi_i)_n}\big(U_i^{p-1} \psi_i^j\big)\phi = p \ep \intom a \pxn\big(U_i^{p-1} \psi_i^j\big)\phi\\
&= -p \ep \intom \pxn a \cdot U_i^{p-1} \psi_i^j \phi -p \ep \intom a U_i^{p-1} \psi_i^j (\pxn\phi) = -p \ep \intom a U_i^{p-1} \psi_i^j (\pxn\phi) + o\(\delta_i^{-1} \ep^{3 \over 2}\),
\end{align*}
and equation \eqref{C^1_3}, \eqref{est_phi} and Lemma \ref{lemma_pre_7} assert that
\begin{equation}\label{C^1_3.5}
\left|c_{ij} \intom \nabla a \cdot \nabla (\pt P\psi_i^j) \phi\right| \le |c_{ij}| \cdot \|\nabla a\|_{L^{\infty}(\Omega)} \cdot \big\|\nabla \pt P\psi_i^j\big\|_{L^{2n \over n+2}(\Omega)} \cdot \|\phi\| = o(\ep),
\end{equation}
(in fact, this is the only part we use the assumption $n \ge 4$ substantially; see Remark \ref{rmk_pre}), we deduce
\[J_{\ep}'(V+\phi)(\pt \phi) = -p \ep \sij c_{ij}\intom a U_i^{p-1} \psi_i^j (\pxn\phi) + o(\ep).\]
On the other hand, by multiplying \eqref{C^1_1} by $\pxn \phi$ and integrating the result over $\Omega$, we get
\[ \intom S(V+\phi) (\pxn \phi) = p\sij c_{ij} \intom a U_i^{p-1}\psi_i^j (\pxn\phi) + O\(\sij |c_{ij}| \cdot \big\|P\psi_i^j\big\| \cdot \|\phi\|\).\]
Thus using \eqref{C^1_3}, \eqref{est_phi} and Lemma \ref{lemma_pre_4}, we conclude that
\[J_{\ep}'(V+\phi)(\pt \phi) = - \intom S(V+\phi) (\pxn \phi) \ep + o(\ep).\]

Accordingly, if we set $u = V + \phi$,
\begin{align*}
J_{\ep}'(u) (\pt u) = - S(u) (\pxn u) \ep + o(\ep) &= \(\intom \text{div} (a \nabla u) \pxn u + \intom a |u|^{p-1-\ep}u \pxn u\) \ep + o(\ep)\\
&=: (K_1 + K_2)\ep + o(\ep).
\end{align*}

Let us estimate the term $K_2$: From \eqref{est_phi}, the proof of Lemma \ref{lemma_C^0_2} and (a3) (which implies $\pxn a(\xi_0) = \partial_{\nu}a(\xi_0)$), we find
\begin{align*}
K_2 &= {1 \over p+1-\ep} \intom a\(\pxn |u|^{p+1-\ep}\) = - {1 \over p+1-\ep} \intom \(\pxn a\) |V+\phi|^{p+1-\ep}\\
&= - {1 \over p+1-\ep} \intom \(\pxn a\) |V|^{p+1-\ep} + o(1) = - {1 \over p+1} \intom \(\pxn a\) |V|^{p+1} + o(1)\\
&= - {1 \over p+1} ka_1 \pxn a(\xi_0) + o(1) = - {1 \over p+1} ka_1 \partial_{\nu}a(\xi_0) + o(1)
\end{align*}
where $a_1$ is the quantity defined in \eqref{a_1}.

Next, we consider $K_1$: Write
\[K_1 = \intom (\nabla a \cdot \nabla u) (\pxn u) + \intom a \Delta u (\pxn u)
 = {1 \over 2} \intom (\pxn a) |\nabla u|^2 - {1 \over 2} \intpom a|\nabla u|^2 \nu_n dS =: K_{11} + K_{12}\]
where $\nu_n$ is the $n$-th component of the inward unit normal vector to $\po$ and $dS$ is the surface measure on $\po$ (see the proof of Step 1 on page 5 in \cite{Re}).
We compute each term. Firstly, as for $K_2$, we have
\[K_{11} = {1 \over 2} \intom (\pxn a) |\nabla V|^2 + o(1) = {1 \over 2} ka_1 \partial_{\nu}a(\xi_0) + o(1).\]
On the other hand, (2.10) of \cite{Re} gives
\[\intpom \left|\nabla PU_i\right|^2 dS = O\({\delta_i^{n-2} \over \ep^{n-1}}\)\]
and by mimicking the proof of \cite[Lemma 7.2]{MP} or (2.12) in \cite{Re}, one can prove that
\[\intpom |\nabla \phi|^2 dS = o(1).\]
Thus
\begin{align*}
K_{12} &= - {1 \over 2} \intpom a|\nabla V|^2 \nu_n dS + o(1) = - {1 \over 2} \intpom a|\nabla PU_1|^2 \nu_n dS + o(1) \\
&= - \intom a U_1^p (\pxn PU_1) + \left\{\intom (\nabla a \cdot \nabla PU_1) \pxn PU_1 - {1 \over 2} \intom (\pxn a) |\nabla PU_1|^2 \right\} + o(1)\\
&= - p \intom a U_1^{p-1}\psi_1^n PU_1 + \left\{\intom (\nabla a \cdot \nabla PU_1) \pxn PU_1 + \intom (\pxn a) U_1^p PU_1 - {1 \over 2} \intom (\pxn a) |\nabla PU_1|^2 \right\} + o(1)
\end{align*}
(see the proof of Step 2 on page 5 in \cite{Re}). However, we have
\begin{equation}\label{C^1_4}
p \intom a U_1^{p-1}\psi_1^n PU_1 = \({n+2 \over 2n}\) a_1 \partial_{\nu} a(\xi_0) - {1 \over 2} a(\xi_0) a_2 \pt \({d_1 \over 2t}\)^{n-2} + o(1)
\end{equation}
and
\begin{equation}\label{C^1_5}
\intom (\pxn a) |\nabla PU_1|^2,\ \intom (\pxn a) U_1^p PU_1,\ n \intom (\nabla a \cdot \nabla PU_1) \pxn PU_1 = a_1 \partial_{\nu}a(\xi_0) + o(1)
\end{equation}
whose detailed proofs are given below. As a result, we obtain
\[K_{12} = {1 \over 2} a(\xi_0) a_2 \pt \({d_1 \over 2t}\)^{n-2} + o(1)\]
where $a_2$ is given in \eqref{a_2}.

\begin{proof}[Proof of \eqref{C^1_4}]
We write
\begin{equation}\label{C^1_41}
p \intom a U_1^{p-1}\psi_1^n PU_1 = p \intom a U_1^p \psi_1^n + p \intom a U_1^{p-1}\psi_1^n (PU_1-U_1)
\end{equation}
and we estimate the first term in the right-hand side of \eqref{C^1_41}.
By applying \eqref{int_y_zero}, (a3) (in particular, $\la \nabla a(\xi_k), y \ra = \partial_{\nu}a(\xi_k) \cdot y_n$) and Taylor's theorem, \begin{align*}
&\ p \intom a U_1^p \psi_1^n = p \int_{B(\xi_1, \rho\ep)} aU_1^p\psi_1^n + o(1)\\
&= \left[{(n+2)\alpha_n^{p+1} \over \delta_1}\right] \cdot \left[- a(\xi_k) \int_{B(0, \delta^{-1}\rho\ep)^c} {y_n \over (1+|y|^2)^{n+1}}\ dy\right.\\
&\hspace{130pt} \left. + \int_{B(0, \delta_1^{-1}\rho\ep)} \left\{a(\delta_1y+ \delta_1s_1\nu(\xi_0) + \xi_k) - a(\xi_k)\right\} {y_n \over (1+|y|^2)^{n+1}}\ dy \right] + o(1) \\
&= \partial_{\nu}a(\xi_k) \cdot (n+2)\alpha_n^{p+1}\int_{B(0, \delta_1^{-1}\rho\ep)}{y_n^2 \over (1+|y|^2)^{n+1}}\ dy + o(1) = \({n+2 \over 2n}\)\partial_{\nu}a(\xi_0) a_1 + o(1).
\end{align*}

To estimate the second term in the right-hand side of \eqref{C^1_41}, we need
\begin{equation}\label{pxn_H}
\left|(\partial_{x,n} H)(\delta_i y + \xi_i,\xi_j) + (n-2) {(\delta_iy + \xi_i - \xi_j^*)_n \over |\delta_iy + \xi_i - \xi_j^*|^n} \right| = O\({1 \over \ep^{n-2}}\) \quad \text{for } |y| \le \delta_i^{-1}\rho\ep
\end{equation}
where $\nabla H(x,\xi) = (\nabla_x H(x,\xi), \nabla_{\xi} H(x,\xi)) = (\partial_{x,1} H(x,\xi), \cdots, \partial_{x,n} H(x,\xi), \partial_{\xi,1} H(x,\xi), \cdots, \partial_{\xi,n} H(x,\xi))$ and $i, j = 1, \cdots, k$.
  Now, by Lemma \ref{lemma_pre_2}, \ref{lemma_pre_1} and \ref{lemma_pre_01}, \eqref{pxn_H} and the mean value theorem,
\begin{align*}
&~\intom a \(\partial_{(\xi_1)_n} U_1^p\)(PU_1-U_1) = \int_{B(\xi_1, \rho \ep)} a \(\partial_{(\xi_1)_n} U_1^p\) \cdot \alpha_n \delta_1^{n-2 \over 2} H(\cdot, \xi_1) + o(1)\\
&= \int_{B(\xi_1, \rho \ep)} a U_1^p \cdot \alpha_n \delta_1^{n-2 \over 2} \partial_{(\xi_1)_n} (H(\cdot, \xi_1)) + {\partial_{x_n}}\left.\(\int_{B(x, \rho \ep)} a U_1^p \cdot \alpha_n \delta_1^{n-2 \over 2} H(\cdot, \xi_1)\)\right|_{x = \xi_1}\\
&\ \hspace{125pt} - \alpha_n^{p+1} \partial_{(\xi_1)_n}\(\int_{B(0,\delta_1^{-1}\rho\ep)} a(\delta_1 y + \xi_1) {\delta_1^{n-2} \over (1+|y|^2)^{n+2 \over 2}} H(\delta_1 y + \xi_1, \xi_1)\ dy\) + o(1)\\
&= -\alpha_n^{p+1} \int_{B(0,\delta_1^{-1}\rho\ep)} a(\delta_1y + \xi_1) {\delta_1^{n-2} \over (1+|y|^2)^{n+2 \over 2}}(\partial_{x,n}H)(\delta_1y+\xi_1,\xi_1)\ dy + o(1)\\
&= \alpha_n^{p+1}(n-2) \int_{B(0,\delta_1^{-1}\rho\ep)} a(\delta_1y + \xi_1) {\delta_1^{n-2} \over (1+|y|^2)^{n+2 \over 2}}{(2\ep t \nu(\xi_0) + \delta_1 (y + 2s_1\nu(\xi_0)))_n \over |2\ep t \nu(\xi_0) + \delta_1 (y + 2s_1\nu(\xi_0))|^n}\ dy + o(1)\\
& = - {1 \over 2} a(\xi_0)a_2 \pt \({d_1 \over 2t}\)^{n-2} + o(1).
\end{align*}
Hence \eqref{C^1_4} is proved.
\end{proof}

\begin{proof}[Derivation of \eqref{C^1_5}]
By the argument in Section \ref{sec_expansion_1}, we immediately get
\[\intom (\pxn a) |\nabla PU_1|^2,\ \intom (\pxn a) U_1^p PU_1 = a_1 \partial_{\nu}a(\xi_0) + o(1).\]
 On the other hand, by Lemma \ref{lemma_pre_3.75},
\[n \intom (\nabla a \cdot \nabla PU_1) \pxn PU_1 = n \intom (\nabla a \cdot \nabla U_1) \pxn U_1 + o(1).\]
Since (a3) implies $\pxn a(\xi_0) = \partial_{\nu} a(\xi_0)$ and
\[n \intom (\partial_{x_i} a) \cdot (\partial_{x_i} U_1) \cdot (\pxn U_1) = \delta_{in} \cdot \pxn a(\xi_0) \cdot \alpha_n^2(n-2)^2 \int_{\rn} {|y|^2 \over (1+|y|^2)^n} + o(1) =  \delta_{in} \cdot \partial_{\nu} a(\xi_0) a_1+ o(1)\]
for $i = 1, \cdots, n$, \eqref{C^1_5} follows.
\end{proof}

\medskip
In conclusion,
\begin{align*}
J_{\ep}'(u) (\pt u) = \({1 \over 2} - {1 \over p+1}\) ka_1 \partial_{\nu}a(\xi_0) \cdot \ep + {1 \over 2} a(\xi_0) a_2 \cdot \pt \({d_1 \over 2t}\)^{n-2} \ep + o(\ep)
\end{align*}
as desired.
\end{proof}
Consequently, \eqref{s5} for $s = t$ is valid and the proof of Proposition \ref{prop_energy_est} is finished.

\appendix
\section{}
In this appendix, we study functions $PU_{\delta, \xi}$ and $P\psi^j_{\delta, \xi}$ ($j = 0, n$) defined through \eqref{instanton}, \eqref{pdxn}, \eqref{pdxj} and \eqref{proj}.

\subsection{Comparison between $\udx$ and $P\udx$}\label{appendix_W_PW}
Denote by $G(x,y)$ the Green function associated to $-\Delta$ with Dirichlet boundary condition and $H(x,y)$ its regular part: Namely, \[\left\{{\setlength\arraycolsep{2pt}\begin{array}{rll}
-\Delta_x G(x,y) &= \delta_y (x) &\quad \text{for } x \in \Omega,\\
G(x,y) &= 0 & \quad \text{for } x \in \po,
\end{array}} \right.\]
and
\[G(x,y) = \gamma_n \( \frac{1}{|x-y|^{n-2}} - H(x,y) \) \quad \text{ where} \quad \gamma_n = \frac{1}{(n-2)|S^{n-1}|}.\]

\medskip
Since $\Omega$ is smooth, we can choose small $d_0 > 0$ such that, for every $x \in \Omega$ with
$d(x, \po) \le d_0$, there is a unique point $x_{\nu} \in \po$ satisfying $d(x, \po) = |x - x_{\nu}|$.
For such $x \in \Omega$, we define $x^* = 2x_{\nu} - x$ the reflection point of $x$ with respect to $\po$.

\medskip
The following two lemmas are proved in \cite[Appendix A]{ACP} under the assumption that $\Omega$ is of class $C^2$.
\begin{lemma}\label{lemma_pre_1}
There exist a constant $C > 0$ such that
\[\left| H(x, \xi) - {1 \over |x - \xi^*|^{n-2}} \right| \le {C d(\xi, \po) \over |x -\xi^*|^{n-2}}, \quad \quad
\left| \nabla_{\xi} \(H(x, \xi) - {1 \over |x - \xi^*|^{n-2}}\) \right| \le {C \over |x -\xi^*|^{n-2}}\]
and
\[0 \le H(x, \xi) \le {C \over |x - \xi^*|^{n-2}}, \quad |\nabla_\xi H(x, \xi)| \le {C \over |x - \xi^*|^{n-1}}.\]
for any $x \in \Omega$ and $\xi \in \{y \in \Omega : d(y, \po) \le d_0\}$. In particular, we obtain
\[H(x, \xi) \le {C \over |x - \xi|^{n-2}} \quad \text{and} \quad |\nabla_\xi H(x, \xi)| \le {C \over |x - \xi|^{n-1}} \quad \text{for any } x, \xi \in \Omega \]
by taking $C > 0$ larger if necessary.
\end{lemma}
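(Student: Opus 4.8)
The plan is to compare $H(\cdot,\xi)$, the harmonic function in $\Omega$ whose boundary values on $\po$ are $x\mapsto|x-\xi|^{2-n}$, with the reflected fundamental solution $\widetilde H(x,\xi):=|x-\xi^*|^{2-n}$, where $\xi^*=2\xi_\nu-\xi$ is the reflected point introduced in the excerpt. Since $\xi-\xi_\nu$ is normal to $\po$ at $\xi_\nu$, one has $|x-\xi^*|=|x-\xi|$ for every $x$ in the tangent plane $T_{\xi_\nu}\po$, so $\widetilde H$ is the natural first approximation of $H$ near $\po$; moreover, for $d_0$ small the point $\xi^*$ lies in the exterior of $\oo$, so $\widetilde H(\cdot,\xi)$ is harmonic and positive in $\Omega$. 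I would first record two elementary geometric facts, both consequences of the $C^2$-regularity of $\po$ and the smallness of $d_0$, valid for $x\in\Omega$ and $\xi$ in the collar $\{d(\cdot,\po)\le d_0\}$: (i) $|x-\xi^*|\ge c\,|x-\xi|$, and $|x-\xi^*|\le C\,|x-\xi|$ when in addition $x\in\po$; (ii) if $\po$ is written near $\xi_\nu$ as a graph $x_n=\phi(x')$ over $T_{\xi_\nu}\po$ with $\phi(0)=0$, $\nabla\phi(0)=0$ and $|\phi(x')|\le C|x'|^2$, then a direct computation on the graph gives $\bigl|\,|x-\xi|^2-|x-\xi^*|^2\,\bigr|=4\,d(\xi,\po)\,|\phi(x')|\le C\,d(\xi,\po)\,|x-\xi^*|^2$ for $x\in\po$ near $\xi_\nu$.

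Granting this, the first estimate follows from a barrier argument. On $\po$, the mean value theorem applied to $t\mapsto t^{(2-n)/2}$, combined with (i) and (ii), gives
\[\bigl|\,|x-\xi|^{2-n}-|x-\xi^*|^{2-n}\,\bigr|\le C\,d(\xi,\po)\,|x-\xi^*|^{2-n}\qquad\text{for }x\in\po,\]
the points of $\po$ far from $\xi_\nu$ being harmless because there all distances stay bounded away from $0$ and $\infty$ and $\bigl|\,|x-\xi|-|x-\xi^*|\,\bigr|\le|\xi-\xi^*|=2\,d(\xi,\po)$. Hence $w:=H(\cdot,\xi)-\widetilde H(\cdot,\xi)$ is harmonic in $\Omega$ and $|w|\le C\,d(\xi,\po)\,\psi$ on $\po$, with $\psi(x):=|x-\xi^*|^{2-n}$ harmonic and positive in $\Omega$; comparing $w$ with $\pm C\,d(\xi,\po)\,\psi$ via the maximum principle yields $|w(x)|\le C\,d(\xi,\po)\,|x-\xi^*|^{2-n}$ throughout $\Omega$, which is the first claim. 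Since the boundary datum of $H(\cdot,\xi)$ is positive, the minimum principle gives $H>0$ in $\Omega$; the bound $H\le C|x-\xi^*|^{2-n}$ follows from the first claim together with $d(\xi,\po)\le d_0$; and the global bound $H(x,\xi)\le|x-\xi|^{2-n}$, valid for all $x,\xi\in\Omega$, is immediate from the positivity of the Green function, since $\gamma_n H=\gamma_n|x-\xi|^{2-n}-G\le\gamma_n|x-\xi|^{2-n}$ there.

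For the gradient estimates I would differentiate in $\xi$. The map $\xi\mapsto\xi^*=2\pi_{\po}(\xi)-\xi$ is $C^1$ on the collar, where $\pi_{\po}$ is the nearest-point projection onto $\po$, so $\partial_{\xi_i}w(\cdot,\xi)$ is again harmonic in $\Omega$, with boundary datum
\[g_i(x):=\partial_{\xi_i}|x-\xi|^{2-n}-\partial_{\xi_i}\bigl(|x-\xi^*(\xi)|^{2-n}\bigr)\qquad\text{for }x\in\po.\]
In the model case of a flat boundary, $|x-\xi|=|x-\xi^*(\xi)|$ for all $x\in\po$ and all $\xi$, whence $g_i\equiv0$; the curvature of $\po$ is thus the only source of $g_i$, and expanding the two terms and using (i), (ii), the comparability $|x-\xi|\sim|x-\xi^*|$ on $\po$, the bound $\phi(x')=O(|x'|^2)=O(|x-\xi^*|^2)$, $d(\xi,\po)\le c^{-1}|x-\xi^*|$ on $\po$, and $|D_\xi\xi^*|\le C$, one reaches $|g_i(x)|\le C|x-\xi^*|^{2-n}$ on $\po$ (again the part far from $\xi_\nu$ is trivial). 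The same barrier argument with $\psi(x)=|x-\xi^*|^{2-n}$ then gives $|\nabla_\xi w(x,\xi)|\le C|x-\xi^*|^{2-n}$ in $\Omega$, which is the second claim. Finally, $|\nabla_\xi\widetilde H(x,\xi)|\le C|x-\xi^*|^{1-n}$ by a direct computation, so the triangle inequality and $|x-\xi^*|\le C$ give $|\nabla_\xi H(x,\xi)|\le C|x-\xi^*|^{1-n}$, and then $|\nabla_\xi H(x,\xi)|\le C|x-\xi|^{1-n}$ by (i).

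I expect the bookkeeping in the $\xi$-derivative step to be the delicate point: one has to track how $\xi^*$ and the local graph of $\po$ depend on $\xi$ and check that every curvature-induced remainder carries a compensating factor $d(\xi,\po)$ or $|x-\xi^*|^2$ that can be traded for the right power of $|x-\xi^*|$, uniformly over the collar. The smallness of $d_0$ is used repeatedly, to ensure that $\xi^*\notin\oo$, that $|x-\xi^*|\sim|x-\xi|$, and that $\pi_{\po}$ is $C^1$, which is precisely what makes the barrier arguments legitimate. The remaining cases of the two ``in particular'' bounds, namely $d(\xi,\po)>d_0$, are much softer: there the datum $|x-\xi|^{2-n}$ and its $\xi$-gradient are uniformly bounded on $\po$, so $H(\cdot,\xi)$ and $\nabla_\xi H(\cdot,\xi)$ are bounded in $\Omega$ by the maximum principle, which is enough since $|x-\xi|$ is bounded above on $\oo$.
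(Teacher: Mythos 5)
The paper itself does not prove this lemma: it is stated as one of "two lemmas ... proved in [ACP, Appendix A] under the assumption that $\Omega$ is of class $C^2$." So the only meaningful comparison is with the standard argument (originating in Rey's work \cite{Re} and carried out in the ACP appendix), and your reconstruction is indeed that argument: compare $H(\cdot,\xi)$ with the reflected fundamental solution $|\cdot-\xi^*|^{2-n}$, show the two agree on $\partial\Omega$ up to a factor that carries either $d(\xi,\partial\Omega)$ or the right power of $|x-\xi^*|$, and transfer that boundary bound to the interior by a maximum-principle barrier; then differentiate in $\xi$ and repeat. Your geometric ingredients (i) and (ii) are correct — in particular the identity $|x-\xi|^2-|x-\xi^*|^2=-4\,d(\xi,\partial\Omega)\,\phi(x')$ on the graph, and the fact that the flat-boundary model makes the boundary datum of $\nabla_\xi(H-\widetilde H)$ vanish identically — and the deductions of the four displayed estimates and of the two "in particular" bounds (splitting into the collar $d(\xi,\partial\Omega)\le d_0$ and its complement) are sound.

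The one place where your write-up falls short of a complete proof is exactly where you flag it: the bookkeeping for $g_i$. You assert that expanding $\partial_{\xi_i}|x-\xi|^{2-n}-\partial_{\xi_i}|x-\xi^*(\xi)|^{2-n}$ on $\partial\Omega$ and tracking the curvature-induced remainders yields $|g_i|\le C|x-\xi^*|^{2-n}$, but you do not carry out the expansion. This is the only nontrivial computation in the lemma. To close the gap you should (a) write $\partial_{\xi_i}|x-\xi^*(\xi)|^{2-n}=(n-2)|x-\xi^*|^{-n}\,(x-\xi^*)\cdot\partial_{\xi_i}\xi^*$ and split $\partial_{\xi_i}\xi^*$ into the flat-boundary value ($e_i$ for tangential $i$, $-e_n$ for normal) plus an $O(d(\xi,\partial\Omega))$ correction coming from the curvature of the nearest-point projection; (b) pair that correction with the already-established bound $|x-\xi^*|^{1-n}$ for the prefactor, and note $d(\xi,\partial\Omega)\le c^{-1}|x-\xi^*|$ on $\partial\Omega$ to absorb it into $C|x-\xi^*|^{2-n}$; (c) for the flat part of $\partial_{\xi_i}\xi^*$, subtract $\partial_{\xi_i}|x-\xi|^{2-n}$ and control the difference via (i), (ii) exactly as in the zeroth-order estimate. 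Done carefully this is routine but not trivial, and leaving it as "one reaches" understates the amount of work; the rest of the argument is correct and is, as far as I can tell, identical in structure to the cited proof in ACP.
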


\begin{lemma}\label{lemma_pre_2}
If $\xi \in \{y \in \Omega : d(y, \po) \le d_0\}$, then there exists a constant $C>0$ such that
\begin{equation}\label{pre_2_1}
0 \le \udx(x) - P\udx(x) \le \alpha_n \delta^{n-2 \over 2}H(x,\xi) \le {C \delta^{n-2 \over 2}
\over |x - \xi^*|^{n-2}} \quad \text{for all } x \in \Omega.
\end{equation}
Moreover, it holds true that
\[ P\udx(x) = \udx(x) - \alpha_n \delta^{n-2 \over 2}H(x,\xi) + O\({\delta^{n+2 \over 2} \over d(\xi, \po)^n}\), \quad x \in \Omega.\]
\end{lemma}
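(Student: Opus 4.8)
The plan is to exploit the harmonicity of $\varphi_{\delta,\xi}:=\udx-P\udx$ and to compare it, via the maximum principle, with suitable multiples of the regular part $H(\cdot,\xi)$ of the Green function; the refined expansion then follows by applying the same principle to a single correction term.

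First I would record that, by the definition \eqref{proj} of the projection, $\varphi_{\delta,\xi}$ is harmonic in $\Omega$, continuous up to $\po$, and equals $\udx$ on $\po$. Since $\udx>0$ on $\po$, the maximum principle gives $\varphi_{\delta,\xi}\ge 0$ in $\Omega$, which is the left-hand inequality. For the middle inequality I would use that $\alpha_n\delta^{(n-2)/2}H(\cdot,\xi)$ is also harmonic in $\Omega$ (indeed $-\Delta_x H(x,\xi)=0$, since $G(\cdot,\xi)$ and the fundamental solution have the same singularity at $\xi$), and that $H(x,\xi)=|x-\xi|^{-(n-2)}$ for $x\in\po$ because $G(\cdot,\xi)$ vanishes there. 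Hence on $\po$,
\[
\varphi_{\delta,\xi}(x)=\udx(x)=\frac{\alpha_n\delta^{(n-2)/2}}{(\delta^2+|x-\xi|^2)^{(n-2)/2}}\le \frac{\alpha_n\delta^{(n-2)/2}}{|x-\xi|^{n-2}}=\alpha_n\delta^{(n-2)/2}H(x,\xi),
\]
and the maximum principle propagates this to all of $\Omega$. The rightmost inequality $\alpha_n\delta^{(n-2)/2}H(x,\xi)\le C\delta^{(n-2)/2}|x-\xi^*|^{-(n-2)}$ is then exactly the pointwise bound for $H$ supplied by Lemma \ref{lemma_pre_1}.

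For the refined expansion I would set $\eta_{\delta,\xi}:=\udx-P\udx-\alpha_n\delta^{(n-2)/2}H(\cdot,\xi)$, which is again harmonic in $\Omega$ and continuous up to $\po$, and estimate its boundary values. For $x\in\po$ one has $r:=|x-\xi|\ge d(\xi,\po)=:d$, and since $\delta$ is small (so $\delta\le r$) the elementary inequality $(\delta^2+r^2)^{(n-2)/2}-r^{n-2}=\frac{n-2}{2}\,\delta^2\int_0^1(r^2+t\delta^2)^{(n-4)/2}\,dt\le C\delta^2 r^{n-4}$ yields
\[
|\eta_{\delta,\xi}(x)|=\alpha_n\delta^{(n-2)/2}\,\frac{(\delta^2+r^2)^{(n-2)/2}-r^{n-2}}{r^{n-2}(\delta^2+r^2)^{(n-2)/2}}\le C\,\frac{\delta^{(n-2)/2}\,\delta^2\,r^{n-4}}{r^{2(n-2)}}= C\,\frac{\delta^{(n+2)/2}}{r^{n}}\le C\,\frac{\delta^{(n+2)/2}}{d^n}.
\]
Applying the maximum principle to the harmonic function $\eta_{\delta,\xi}$ then gives $|\eta_{\delta,\xi}(x)|\le C\delta^{(n+2)/2}/d(\xi,\po)^n$ throughout $\Omega$, which is the asserted identity.

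The argument is essentially soft; the one point requiring care is the boundary estimate in the last step, where one must keep explicit track of the dependence on $d(\xi,\po)$. This is exactly what produces the error $O(\delta^{(n+2)/2}/d(\xi,\po)^n)$ and nothing smaller, and it is also why the hypothesis $d(\xi,\po)\le d_0$ is imposed, so that $\xi^*$ is well defined and the geometry near $\po$ (hence the constants appearing in Lemma \ref{lemma_pre_1}) stays under control. Apart from that, everything reduces to the maximum principle combined with Lemma \ref{lemma_pre_1}.
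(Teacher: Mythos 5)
Your argument is correct and is the standard maximum-principle proof of this estimate; the paper itself delegates to \cite[Appendix~A]{ACP}, and what you wrote is in substance that proof. The only point worth flagging is that your numerator bound $(\delta^2+r^2)^{(n-2)/2}-r^{n-2}\le C\delta^2 r^{n-4}$ tacitly uses $\delta\lesssim r\ (\ge d(\xi,\partial\Omega))$; this costs nothing, since when $\delta\gtrsim d(\xi,\partial\Omega)$ the crude bound $|\eta_{\delta,\xi}|\le C\delta^{(n-2)/2}/d(\xi,\partial\Omega)^{n-2}$ on $\partial\Omega$ is already $\le C\delta^{(n+2)/2}/d(\xi,\partial\Omega)^n$, and in any case the paper only invokes the lemma in the regime $\delta\ll d(\xi,\partial\Omega)$.
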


\medskip
From the previous lemmas, we can show that
\begin{lemma}\label{lemma_pre_3.5}
Denote $PU_i = PU_{\delta_i, \xi_i}.$   Then
\[\|U_i - PU_i\|_{L^q(\Omega)} = o(1) \quad \text{if } q \in \({n \over n-2}, {2n \over n-3}\)\ \hbox{if}\ n\ge4\ \hbox{or}\ q \in \({n \over n-2},+\infty\)\ \hbox{if}\ n=3.\]
\end{lemma}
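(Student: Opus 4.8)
The plan is to reduce everything to the pointwise bound for $U_i-PU_i$ furnished by Lemma~\ref{lemma_pre_2} and then to carry out a power count in $\ep$; the only substantial input is the geometric fact that the reflected point $\xi_i^{*}$ stays at distance of order $\ep$ from $\oo$.

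First I would record the pointwise estimate. On a fixed compact set $\Lambda_0\subset\Lambda$ one has, by \eqref{conc_para}--\eqref{conc_pt}, that $d(\xi_i,\po)\ge c\ep$ — because $\xi_i=\xi_0+(\ep t+\delta_is_i)\nu(\xi_0)$ with $t$ bounded below, $\delta_i=o(\ep)$, and $\po$ is smooth near $\xi_0$ — while $\delta_i\to0$. Hence for $\ep$ small the hypothesis of Lemma~\ref{lemma_pre_2} is met and
\[0\le U_i(x)-PU_i(x)\le\alpha_n\,\delta_i^{\frac{n-2}{2}}H(x,\xi_i)\le C\,\frac{\delta_i^{\frac{n-2}{2}}}{|x-\xi_i^{*}|^{\,n-2}}\qquad\text{for all }x\in\Omega.\]
Moreover, since $\xi_i^{*}$ is the reflection of $\xi_i$ across $\po$, it lies outside $\oo$ and $d(\xi_i^{*},\oo)=d(\xi_i^{*},\po)\ge c\,d(\xi_i,\po)\ge c'\ep$, so $|x-\xi_i^{*}|\ge c'\ep$ for every $x\in\Omega$.

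Next I would raise this to the power $q$ and integrate:
\[\|U_i-PU_i\|_{L^q(\Omega)}^q\le C\,\delta_i^{\frac{(n-2)q}{2}}\intom\frac{dx}{|x-\xi_i^{*}|^{\,(n-2)q}}.\]
For $q$ in the stated range we have $(n-2)q>n$, so the last integral concentrates near $\xi_i^{*}$, and using $|x-\xi_i^{*}|\ge c'\ep$ together with boundedness of $\Omega$ it is bounded by $C\int_{c'\ep}^{R}r^{\,n-1-(n-2)q}\,dr\le C\,\ep^{\,n-(n-2)q}$ for a fixed $R$. (If instead $q\le n/(n-2)$ the integral is merely $O(1)$ or $O(|\log\ep|)$ and the argument is even easier.) Substituting $\delta_i=\ep^{\frac{n-1+2(i-1)}{n-2}}d_i$ from \eqref{conc_para} then gives
\[\|U_i-PU_i\|_{L^q(\Omega)}^q\le C\,\ep^{\,\frac{q\,(2(i-1)+3-n)}{2}+n},\]
and the exponent is positive for every $i\ge1$ precisely when $q<\tfrac{2n}{n-3}$ if $n\ge4$, while it equals $q(i-1)+3>0$ for all $q$ if $n=3$. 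This proves $\|U_i-PU_i\|_{L^q(\Omega)}=o(1)$ on the asserted range.

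The hard part is not the analysis but the bookkeeping: once Lemma~\ref{lemma_pre_2} is granted, one only has to be careful with the geometric estimate $d(\xi_i^{*},\oo)\ge c\ep$ (which requires tracking the curvature of $\po$ near $\xi_0$ and the precise location of $\xi_i$) and with verifying that the binding case in the power count is $i=1$ — the slowest concentrating layer, for which $\delta_i$ is largest — so that exactly the threshold $q<2n/(n-3)$, degenerating as $n\to3$, appears.
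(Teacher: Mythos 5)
Your proof is correct and takes essentially the same route as the paper: both invoke the pointwise bound $0\le U_i-PU_i\le C\delta_i^{(n-2)/2}|x-\xi_i^*|^{-(n-2)}$ from Lemma~\ref{lemma_pre_2} and then power-count in $\ep$, exploiting that $|x-\xi_i^*|\gtrsim\ep$ for $x\in\Omega$. The paper performs the change of variables $y=(x-\xi_i)/\delta_i$ before integrating while you integrate directly in $x$, but the resulting radial integral and the final estimate $\|U_i-PU_i\|_{L^q(\Omega)}^q\lesssim\delta_i^{(n-2)q/2}\ep^{n-(n-2)q}$ are identical, with the $i=1$ layer giving the binding threshold $q<2n/(n-3)$.
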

\begin{proof}
By \eqref{pre_2_1} and \eqref{conc_para}, we have
\begin{align*}
\|U_i - PU_i\|_{L^q(\Omega)}^q &\le \intom {\delta_i^{(n-2)q \over 2} \over |x - \xi_i^*|^{(n-2)q}}
= \delta_i^{n - {(n-2)q \over 2}} \int_{\Omega - \xi_i \over \delta_i} {dy \over |y + 2\((\ep/\delta_i)t + s_i\)\nu(\xi_0)|^{(n-2)q}}\\
& \le C \delta_i^{n - {(n-2)q \over 2}} \int_{\ep \delta_i^{-1}}^{C\delta_i^{-1}} {s^{n-1} \over s^{(n-2)q}} ds \le C \delta_i^{(n-2)q \over 2} \ep^{n - (n-2)q} \le C \ep^{(n-1)q \over 2} \cdot \ep^{n - (n-2)q}\\
&= O\(\ep^{n - {(n-3)q \over 2}}\) = o(1)
\end{align*}
for some $C>0$.
\end{proof}

\medskip
In addition, we can estimate the $H^1(\Omega)$-norm of $U_i - PU_i$ as follows.
\begin{lemma}\label{lemma_pre_3.75}
It holds true that
\[\|U_i - PU_i\|_{H^1(\Omega)} = O(\sqrt{\ep}). \]
\end{lemma}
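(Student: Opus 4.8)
The plan is to set $\varphi_i := U_i - PU_i$ and use that, by \eqref{proj}, $\varphi_i$ is harmonic in $\Omega$ with $\varphi_i = U_i$ on $\po$ (since $PU_i$ vanishes there); in particular $\varphi_i \in C^\infty(\oo)$. Since $PU_i \in \ho$ and $\Delta \varphi_i = 0$, integration by parts gives $\intom \nabla \varphi_i \cdot \nabla PU_i = 0$, so that
\[\intom |\nabla \varphi_i|^2 = \intom \nabla \varphi_i \cdot \nabla U_i = \intpom U_i\, \partial_\nu U_i\, dS + \intom \varphi_i\, U_i^p\, dx,\]
where we used $-\Delta U_i = U_i^p$ in $\rn$ and $\varphi_i = U_i$ on $\po$. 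It remains to bound the two terms on the right by $O(\ep)$; combining the resulting estimate $\|\nabla\varphi_i\|_{L^2(\Omega)} = O(\sqrt\ep)$ with the bound $\|\varphi_i\|_{L^2(\Omega)} = O(\sqrt\ep)$ obtained by the same computation as in the proof of Lemma \ref{lemma_pre_3.5} (taken with $q=2$, which for $n=4$ produces only a harmless extra factor $\log(1/\ep)$) then yields the claim.

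For the volume term, Lemma \ref{lemma_pre_2} gives $0 \le \varphi_i(x) \le \alpha_n \delta_i^{n-2 \over 2} H(x,\xi_i)$, and by \eqref{conc_pt} the nearest point of $\po$ to $\xi_i$ is $\xi_0$, with $d(\xi_i, \po) = \ep t + \delta_i s_i = \ep t(1+o(1))$ on compact subsets of $\Lambda$. I would split $\Omega = B(\xi_i, \rho\ep) \cup (\Omega \setminus B(\xi_i, \rho\ep))$. On the ball, $|x - \xi_i^*| \ge 2 d(\xi_i, \po) - |x - \xi_i| \gtrsim \ep$, so Lemma \ref{lemma_pre_1} gives $\varphi_i \le C \delta_i^{n-2 \over 2}\ep^{-(n-2)}$ there, and since $\intom U_i^p = O(\delta_i^{n-2 \over 2})$ this piece is $O((\delta_i/\ep)^{n-2})$. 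On the complement, the "in particular" bound $H(x,\xi_i) \le C|x-\xi_i|^{-(n-2)}$ of Lemma \ref{lemma_pre_1} together with $U_i^p(x) \le \alpha_n^p \delta_i^{n+2 \over 2}|x-\xi_i|^{-(n+2)}$ gives a contribution $O\big(\delta_i^n \int_{|y| \ge \rho\ep} |y|^{-2n}\, dy\big) = O((\delta_i/\ep)^n)$. By \eqref{conc_para}, $\delta_i/\ep = \ep^{2i-1 \over n-2} d_i$, so the first piece is $O(\ep^{2i-1}) = O(\ep)$ for $i \ge 1$ and, since ${n \over n-2}>1$, the second is $O(\ep^{n(2i-1) \over n-2}) = o(\ep)$.

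For the boundary term, I would read off from \eqref{instanton} and \eqref{pdxj} the pointwise bounds $|U_i(x)| \le \alpha_n \delta_i^{n-2 \over 2}|x-\xi_i|^{-(n-2)}$ and $|\nabla U_i(x)| \le C \delta_i^{n-2 \over 2}|x-\xi_i|^{-(n-1)}$, whence $|U_i\, \partial_\nu U_i| \le C \delta_i^{n-2}|x-\xi_i|^{-(2n-3)}$ on $\po$. Flattening $\po$ near $\xi_0$ and using \eqref{conc_pt} to get $|x-\xi_i|^2 \gtrsim |x-\xi_0|^2 + (\ep t)^2$ for $x \in \po$, one finds
\[\left| \intpom U_i\, \partial_\nu U_i\, dS \right| \le C \delta_i^{n-2}\! \int_{\mathbb{R}^{n-1}} \frac{dz}{(|z|^2 + (\ep t)^2)^{2n-3 \over 2}} + O(\delta_i^{n-2}) = O\big(\delta_i^{n-2}\ep^{-(n-2)}\big) = O(\ep^{2i-1}),\]
the rescaled integral converging because $2n-3 > n-1$ for $n \ge 3$. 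Hence both terms are $O(\ep)$, as needed. All the estimates here are routine; the only mild points to watch are the $n=4$ borderline in the $L^2$ bound mentioned above and the verification that $\rho$ may be chosen so that $B(\xi_i,\rho\ep) \subset \Omega$ with $|x-\xi_i^*| \gtrsim \ep$ on it, both of which are harmless. One may alternatively quote the analogous estimates from \cite[Appendix C]{ACP} or from \cite{Re}.
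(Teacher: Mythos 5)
Your proof is correct, and it takes a genuinely different route from the paper's.

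The paper's argument expands
$\|\nabla(U_i-PU_i)\|_{L^2(\Omega)}^2 = \intom |\nabla PU_i|^2 - 2\intom \nabla PU_i\cdot\nabla U_i + \intom |\nabla U_i|^2$,
identifies the first two terms as $\intom U_i^p PU_i$, rescales each of $\intom U_i^{p+1}$ and $\intom |\nabla U_i|^2$ to a fixed integral on $\rn$ plus an $O(\ep)$ tail, and then relies on an exact cancellation of the two $\ep$-independent leading constants. That cancellation is exactly the identity $n\int_{\rn}(1+|y|^2)^{-n}\,dy=(n-2)\int_{\rn}|y|^2(1+|y|^2)^{-n}\,dy$, encoded through the normalization $\alpha_n^{p-1}=n(n-2)$; the paper does not write it out, and if one does not notice it the displayed computation looks like $O(1)$ rather than $O(\ep)$. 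Your argument instead integrates by parts against the harmonic function $\varphi_i=U_i-PU_i$ and lands on $\intom|\nabla\varphi_i|^2 = \intpom U_i\,\partial_\nu U_i\,dS + \intom\varphi_i\,U_i^p$, which is algebraically the same quantity but in a form where the $O(\ep)$ smallness is visible term by term: both the boundary integral and the bulk integral are manifestly $O\bigl((\delta_i/\ep)^{n-2}\bigr)=O(\ep^{2i-1})$ by the pointwise bounds of Lemmas \ref{lemma_pre_1} and \ref{lemma_pre_2}, with no hidden cancellation. The trade-off is that you must estimate a boundary integral that the paper avoids; both estimates are routine. You also bound $\|\varphi_i\|_{L^2(\Omega)}$ separately (including the $n=4$ borderline with a logarithm), which the paper's displayed computation in fact omits — it only treats the gradient seminorm even though the statement asserts an $H^1(\Omega)$ bound — so in that respect your write-up is the more complete one.
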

\begin{proof}
From the definition \eqref{instanton} of $U_i$ and the fact $\alpha_n^{p-1} = n(n-2)$, we get
\begin{align*}
\|U_i - PU_i\|_{H^1(\Omega)}^2 &= \(\intom |\nabla PU_i|^2 - 2 \intom \nabla PU_i \cdot \nabla U_i\) + \intom |\nabla U_i|^2\\
&= \(\intom U_i^{n+2 \over n-2} PU_i - 2 \intom U_i^{n+2 \over n-2} PU_i\) + \alpha_n^2(n-2)^2 \delta_i^{n-2} \intom {|x-\xi_i|^2 \over (\delta_i^2 + |x-\xi_i|^2)^n}\\
&= \(- \alpha_n^{p+1} \int_{\rn} {1 \over (1+|y|^2)^n} + O(\ep)\) + \(\alpha_n^2(n-2)^2 \int_{\rn} {|y|^2 \over (1+|y|^2)^n} + O(\ep)\)\\
&= O(\ep).
\end{align*}
\end{proof}

\subsection{Estimates of $\psi_i^j$'s}
First, we want  to establish a result similar to the ones proved in   Lemma \ref{lemma_pre_2} and Lemma \ref{lemma_pre_3.5}.
\begin{lemma}\label{lemma_pre_5}
For any $i = 1, \cdots, k$, we have
\begin{align}
P\psi_i^0 &= \psi_i^0 - \alpha_n \({n-2 \over 2}\) \delta_i^{n-4 \over 2} H(\cdot, \xi_i) + O\({\delta_i^{n \over 2} \over \ep^n}\) \quad \text{in } \Omega \label{pre_5_1}
\intertext{and}
P\psi_i^n &= \psi_i^n - \alpha_n \delta_i^{n-2 \over 2} (\partial_{\xi,n}H)(\cdot, \xi_i) + O\({\delta_i^{n+2 \over 2} \over \ep^{n+1}}\) \quad \text{in } \Omega \label{pre_5_2}
\end{align}
where $(\partial_{\xi,n}H)(x,\xi)$ is the $n$-th component of $\nabla_{\xi} H(x,\xi)$.
Moreover,
\begin{equation}\label{pre_5_3}
\big\|\delta_i\big(P \psi_i^j - \psi_i^j\big)\big\|_{L^{2n \over n-2}(\Omega)} = O\(\ep^{n \over n-2}\)
\end{equation}
for $j = 0,\ n$.
\end{lemma}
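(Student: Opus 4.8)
The plan is to derive the two expansions \eqref{pre_5_1}--\eqref{pre_5_2} by differentiating the expansion of $P\udx$ in Lemma \ref{lemma_pre_2} with respect to the parameters $\delta$ and $\xi_n$, and then to deduce \eqref{pre_5_3} by the same annulus/scaling computation used in Lemma \ref{lemma_pre_3.5}. First I would record that, since $\Delta P\udx = \Delta \udx$ in $\Omega$ and $P\udx = 0$ on $\po$, differentiating under the harmonic-extension operator is legitimate: $P\psi_i^0 = \partial_\delta(P\udx)\big|_{\delta=\delta_i,\xi=\xi_i}$ and $P\psi_i^n = \partial_{\xi_n}(P\udx)\big|_{\delta=\delta_i,\xi=\xi_i}$, because the map $W\mapsto PW$ is linear and continuous and commutes with parameter derivatives of the smooth family $\udx$. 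Concretely one writes $P\udx = \udx - \alpha_n\delta^{(n-2)/2}\varphi_{\delta,\xi}$ where $\varphi_{\delta,\xi}(x)$ is the harmonic function in $\Omega$ equal to $\delta^{-(n-2)/2}\udx(x)$ on $\po$; by Lemma \ref{lemma_pre_1} and Lemma \ref{lemma_pre_2}, $\varphi_{\delta,\xi}(x) = H(x,\xi) + O(\delta^2/d(\xi,\po)^n)$ with corresponding control on its $\xi$-derivatives.

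Next, for \eqref{pre_5_1} I would differentiate $P\udx = \udx - \alpha_n\delta^{(n-2)/2}H(\cdot,\xi) + O(\delta^{(n+2)/2}/d(\xi,\po)^n)$ in $\delta$. The main term $\partial_\delta\udx = \psi_i^0$ by definition \eqref{pdxn}; the term $\partial_\delta\big(\alpha_n\delta^{(n-2)/2}H(\cdot,\xi)\big) = \alpha_n\frac{n-2}{2}\delta^{(n-4)/2}H(\cdot,\xi)$; and the error $\partial_\delta O(\delta^{(n+2)/2}/d(\xi,\po)^n)$ is $O(\delta^{n/2}/d(\xi,\po)^n) = O(\delta_i^{n/2}/\ep^n)$ since $d(\xi_i,\po)\sim\ep t$. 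To make the differentiation of the error term rigorous one needs the $\delta$-derivative of the remainder in Lemma \ref{lemma_pre_2}, which follows from the same reflection estimates once they are applied to $\partial_\delta\varphi_{\delta,\xi}$; this is the kind of routine refinement of Lemma \ref{lemma_pre_1}--\ref{lemma_pre_2} that I would invoke rather than reprove. For \eqref{pre_5_2} I would differentiate the same expansion in $\xi_n$: $\partial_{\xi_n}\udx = \psi_i^n$, $\partial_{\xi_n}\big(\alpha_n\delta^{(n-2)/2}H(\cdot,\xi)\big) = \alpha_n\delta^{(n-2)/2}(\partial_{\xi,n}H)(\cdot,\xi)$, and the error becomes $O(\delta^{(n+2)/2}/d(\xi,\po)^{n+1}) = O(\delta_i^{(n+2)/2}/\ep^{n+1})$ (one loses one power of $d(\xi,\po)$ differentiating the reflection kernel, exactly as in the bound $|\nabla_\xi H|\lesssim |x-\xi^*|^{-(n-1)}$ of Lemma \ref{lemma_pre_1}).

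Finally, for \eqref{pre_5_3} I would note that $\delta_i(P\psi_i^j - \psi_i^j)$ equals, up to the lower-order remainders just estimated, $-\alpha_n c_{n,j}\delta_i^{(n-2)/2}\nabla^{(j)}_\xi H(\cdot,\xi_i)$ (with $\nabla^{(0)}$ meaning no derivative and $c$ absorbing the $\delta$-power bookkeeping), which by Lemma \ref{lemma_pre_1} is $O(\delta_i^{(n-2)/2}/|x-\xi_i^*|^{n-2})$ — precisely the pointwise bound appearing in \eqref{pre_2_1} for $\udx - P\udx$. Hence the $L^{2n/(n-2)}$-norm is estimated by the identical change of variables $y = (x-\xi_i)/\delta_i$ and annulus splitting as in the proof of Lemma \ref{lemma_pre_3.5}, giving $\|\delta_i(P\psi_i^j-\psi_i^j)\|_{L^{2n/(n-2)}(\Omega)} = O(\ep^{n/(n-2)})$; one then checks that the remainder terms $O(\delta_i^{n/2}/\ep^n)$ and $O(\delta_i^{(n+2)/2}/\ep^{n+1})$, multiplied by $\delta_i$ and measured in $L^{2n/(n-2)}$ over a domain of bounded measure, are of strictly smaller order because of the gain coming from \eqref{conc_para}, $\delta_i = \ep^{(n-1+2(i-1))/(n-2)}d_i \ll \ep$. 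The main obstacle, such as it is, is the bookkeeping in the previous paragraph: making the differentiation of the remainder in Lemma \ref{lemma_pre_2} rigorous and tracking the exact power of $d(\xi_i,\po)^{-1}$ picked up by each $\xi_n$-derivative; the integral estimate itself is a verbatim repetition of Lemma \ref{lemma_pre_3.5} and presents nothing new.
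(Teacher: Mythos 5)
The paper's proof of \eqref{pre_5_1}--\eqref{pre_5_2} is not by differentiation but by applying the comparison principle directly to the harmonic function $P\psi_i^j - \psi_i^j$: it is harmonic in $\Omega$ (since $\Delta P\psi_i^j = \Delta\psi_i^j$), with boundary values $-\psi_i^j\vert_{\po}$; expanding the explicit formulae \eqref{pdxn}--\eqref{pdxj} in $\delta_i^2/|x-\xi_i|^2 = O(\delta_i^2/\ep^2)$ for $x \in \po$ matches these boundary values, to within $O(\delta_i^{n/2}/\ep^n)$ (resp.\ $O(\delta_i^{(n+2)/2}/\ep^{n+1})$), with the boundary values $|x-\xi_i|^{-(n-2)}$ (resp.\ $(n-2)(x-\xi_i)_n/|x-\xi_i|^n$) of the harmonic function $H(\cdot,\xi_i)$ (resp.\ $\partial_{\xi,n}H(\cdot,\xi_i)$), and the maximum principle then propagates the boundary discrepancy into $\Omega$. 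No error term is ever differentiated. Your route --- differentiate the expansion of $P\udx$ in $\delta$ and $\xi_n$ --- is sound in spirit, but the step you flag and defer is a genuine gap, not routine bookkeeping: a pointwise $O(\cdot)$ bound on the remainder in Lemma \ref{lemma_pre_2} gives no control on the parameter derivatives $\partial_\delta\varphi_{\delta,\xi}$, $\partial_{\xi_n}\varphi_{\delta,\xi}$ of the harmonic correction. To close it you must estimate those derivatives independently, and doing so amounts to the very comparison-principle argument the paper uses applied to the boundary data of those derivative functions --- so once made rigorous your approach reduces to the paper's; ``differentiate and invoke'' does not stand alone.

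A further small imprecision in your treatment of \eqref{pre_5_3}: for $j = n$, Lemma \ref{lemma_pre_1} gives $|\delta_i(P\psi_i^n - \psi_i^n)| \lesssim \delta_i^{n/2}/|x-\xi_i^*|^{n-1}$, not the $\delta_i^{(n-2)/2}/|x-\xi_i^*|^{n-2}$ bound of \eqref{pre_2_1}. The exponents in the scaling computation therefore change; the resulting $L^{2n/(n-2)}$-estimate is actually stronger, so \eqref{pre_5_3} still holds for $j = n$, but the computation is not a verbatim repetition of Lemma \ref{lemma_pre_3.5} for that case.
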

\begin{proof}
From the comparison principle, we easily deduce  \eqref{pre_5_1} and \eqref{pre_5_2}.
Arguing exactly as in Lemma  \ref{lemma_pre_3.5} and taking into account Lemma \ref{lemma_pre_1}, we can prove  \eqref{pre_5_3}.
\end{proof}

\medskip
The above lemma enables to estimate the difference between $\pxn PU_i$ and $\pxn U_i$ for $i = 1, \cdots, k$. Let $p = (n+2)/(n-2)$.
\begin{lemma}\label{lemma_pre_8}
For $i = 1, \cdots, k$,
\begin{equation}\label{pre_2_2}
\pxn PU_i(x) = \pxn U_i(x) + \alpha_n \delta_i^{n-2 \over 2} (\partial_{\xi,n}H)(x, \xi_i) + O\({\delta_i^{n-2 \over 2} \over \ep^{n-1}}\).
\end{equation}
\end{lemma}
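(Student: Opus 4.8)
The plan is to reduce \eqref{pre_2_2} to a maximum-principle estimate for an auxiliary \emph{harmonic} function and then combine that with the expansion \eqref{pre_5_2} for $P\psi_i^n$ already established in Lemma \ref{lemma_pre_5}. First I would record the defining elliptic identities: by \eqref{proj}, \eqref{limit_eq} and \eqref{eq_of_psi} one has $-\Delta PU_i = U_i^p$ and $-\Delta P\psi_i^n = p\,U_i^{p-1}\psi_i^n$ in $\Omega$ with $p = (n+2)/(n-2)$, while $\pxn U_i = -\psi_i^n$ follows directly from \eqref{instanton} and \eqref{pdxj}. Differentiating the first identity in $x_n$ gives $-\Delta(\pxn PU_i) = p\,U_i^{p-1}\pxn U_i = -p\,U_i^{p-1}\psi_i^n$, so the function $\Phi_i := \pxn PU_i + P\psi_i^n$ is harmonic in $\Omega$. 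On $\po$ one has $P\psi_i^n = 0$ and $PU_i \equiv 0$, hence $\nabla PU_i = (\partial_\nu PU_i)\,\nu$ is normal to $\po$, so that $\pxn PU_i = \nu_n\,\partial_\nu PU_i$ and $|\Phi_i| \le |\nabla PU_i|$ on $\po$; the maximum principle then gives $\|\Phi_i\|_{L^\infty(\Omega)} \le \|\nabla PU_i\|_{L^\infty(\po)}$, reducing everything to a boundary gradient bound for $PU_i$.

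For that bound I would use the Green representation $PU_i(x) = \intom G(x,y)\,U_i(y)^p\,dy$ and the standard Poisson-kernel estimate $|\nabla_x G(x,y)| \le C\,d(y,\po)\,|x-y|^{-n}$ for $x \in \po$ and $y \in \Omega$ (available since $\Omega$ is smooth), which yields $|\nabla PU_i(x)| \le C\intom d(y,\po)\,|x-y|^{-n}\,U_i(y)^p\,dy$ for $x \in \po$. Because $\xi_i$ lies at distance comparable to $\ep$ from $\po$ by \eqref{conc_pt}, while $U_i^p$ is concentrated at scale $\delta_i \ll \ep$ about $\xi_i$ with $\int_{\rn} U_i^p = O\!\big(\delta_i^{(n-2)/2}\big)$, splitting $\Omega$ into $B(\xi_i,\ep/2)$ and its complement gives $\|\nabla PU_i\|_{L^\infty(\po)} = O\!\big(\delta_i^{(n-2)/2}/\ep^{n-1}\big)$; alternatively one may invoke a boundary gradient estimate for the problem $-\Delta PU_i = U_i^p$ in $\Omega$, $PU_i = 0$ on $\po$. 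In either case $\Phi_i = O\!\big(\delta_i^{(n-2)/2}/\ep^{n-1}\big)$ in $\Omega$.

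To conclude I would write $\pxn PU_i = -P\psi_i^n + \Phi_i$ and insert \eqref{pre_5_2} together with $\pxn U_i = -\psi_i^n$, obtaining
\[\pxn PU_i = \pxn U_i + \alpha_n \delta_i^{n-2 \over 2}(\partial_{\xi,n}H)(\cdot,\xi_i) + O\!\left({\delta_i^{(n+2)/2} \over \ep^{n+1}}\right) + O\!\left({\delta_i^{(n-2)/2} \over \ep^{n-1}}\right);\]
since $\delta_i/\ep \to 0$ by \eqref{conc_para}, the first error term is $o\!\big(\delta_i^{(n-2)/2}/\ep^{n-1}\big)$ and is absorbed, which is exactly \eqref{pre_2_2}. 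I expect the only genuinely delicate point to be the boundary gradient estimate of the second paragraph: because $\po$ is curved, $\pxn PU_i$ does \emph{not} vanish on $\po$ — which is precisely why $\Phi_i$ is nontrivial — and one must check that this curvature effect does not degrade the rate; the Poisson-kernel bound settles this cleanly since the bubble is centered at height $\sim \ep$ above $\po$.
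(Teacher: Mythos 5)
Your proposal is correct and follows essentially the same strategy as the paper: both introduce the harmonic auxiliary function $\pxn PU_i + P\psi_i^n$, apply the maximum principle to reduce to a boundary $L^\infty$ bound on $\pxn PU_i$, obtain that bound via the Green representation, and finish by substituting the expansion \eqref{pre_5_2} together with $\pxn U_i = -\psi_i^n$. The only cosmetic variation is that you invoke the Poisson-kernel bound $|\nabla_x G(x,y)| \le C\, d(y,\po)\,|x-y|^{-n}$ for $x\in\po$ and split the integral around $\xi_i$, whereas the paper uses the cruder $|\pxn G(x,y)| \le C|x-y|^{1-n}$ (obtained from Lemma \ref{lemma_pre_1}) and splits around the boundary point $x$; both lead to the same rate $O\big(\delta_i^{(n-2)/2}/\ep^{n-1}\big)$.
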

\begin{proof}
Let $w = \pxn PU_i + P\psi_i^n$ so that it solves $\Delta w = 0$ in $\Omega$ and $w = \pxn PU_i$ on $\po$. Then by the maximum principle, $\|w\|_{L^{\infty}(\Omega)} \le \|\pxn PU_i\|_{L^{\infty}(\po)}$. Recalling $H(x,y) = H(y,x)$ and applying Lemma \ref{lemma_pre_1}, we observe that there is a constant $C>0$ such that
\begin{align*}
|\pxn PU_i (x)| &\le \intom |\pxn G(x,y)| \cdot U_i^p(y) dy = \gamma_n (n-2) \intom \left|{(x-y)_n \over |x-y|^n} - (\partial_{\xi,n}H)(y,x)\right| \cdot U_i^p(y) dy\\
& \le C \intom {1 \over |x - y|^{n-1}}\ U_i^p(y) dy.
\end{align*}
Now we choose $\rho > 0$ sufficiently small so that $B(x, \rho \ep) \cap B(\xi_i, \rho \ep) = \emptyset$ for any $x \in \po$. Then for $x \in \po$,
\[\int_{\Omega \cap B(x, \rho \ep)} {1 \over |x - y|^{n-1}} U_i^p(y) dy \le C\({\delta_i^{n+2 \over 2} \over \ep^{n+2}}\) \int_{B(x, \rho \ep)} {1 \over |x - y|^{n-1}} dy= O\({\delta_i^{n+2 \over 2} \over \ep^{n+1}}\)\]
and
\[\int_{\Omega \setminus B(x, \rho \ep)}  {1 \over |x - y|^{n-1}} U_i^p(y) dy \le C\({1 \over \ep^{n-1}}\) \int_{\rn} {\delta_i^{n-2 \over 2} \over (1+|z|^2)^{n+2 \over 2}}\ dz = O\({\delta_i^{n-2 \over 2} \over \ep^{n-1}}\).\]
Therefore we deduce
\[\|\pxn PU_i\|_{L^{\infty}(\Omega)} = O\({\delta_i^{n-2 \over 2} \over \ep^{n-1}}\).\]
Consequently, by \eqref{pre_5_2}, we obtain
\[\pxn PU_i (x) = - P\psi_i^n(x) + O\({\delta_i^{n-2 \over 2} \over \ep^{n-1}}\) = \pxn U_i(x) + \alpha_n \delta_i^{n-2 \over 2} (\partial_{\xi,n}H)(x, \xi_i) + O\({\delta_i^{n-2 \over 2} \over \ep^{n-1}}\).\]
Hence \eqref{pre_2_2} holds.
\end{proof}

\medskip
The next lemma is crucial for the proof of Proposition \ref{prop_linear}.
\begin{lemma}\label{lemma_pre_4}
For $i, \ l = 1, \cdots, k$, $i \le l$ and $j, \ m = 0, \ n$, it holds that
\[\la P\psi_i^j, P\psi_l^m \ra = \left\{ \begin{array}{ll}
a(\xi_0) c_j \dfrac{1}{\delta_i^2} + o\(\dfrac{1}{\delta_i^2}\) &\text{if } i = l \text{ and } j = m, \\
o\(\dfrac{1}{\delta_i^2}\) &\text{otherwise,}
\end{array}\right.\]
where $c_0$ and $c_n$ are positive constants.
\end{lemma}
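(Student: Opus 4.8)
The plan is to turn the pairing into a weighted $L^2$-integral by integration by parts. Since $\psi_i^j$ solves the linearized equation \eqref{eq_of_psi}, i.e.\ $-\Delta\psi_i^j = p\,U_i^{p-1}\psi_i^j$ with $p=(n+2)/(n-2)$, the projection \eqref{proj} satisfies $-\text{div}\big(a\nabla P\psi_i^j\big)=p\,a\,U_i^{p-1}\psi_i^j-\nabla a\cdot\nabla P\psi_i^j$ in $\Omega$ and $P\psi_i^j=0$ on $\po$. Pairing with $P\psi_l^m$, whose trace on $\po$ also vanishes, gives
\[\la P\psi_i^j,P\psi_l^m\ra = p\intom a\,U_i^{p-1}\psi_i^j\,P\psi_l^m - \intom\big(\nabla a\cdot\nabla P\psi_i^j\big)P\psi_l^m.\]
First I would bound the last integral by $o(1/\delta_i^2)$ using the estimates on $\nabla P\psi_i^j$ behind Lemma \ref{lemma_pre_3} together with \eqref{pre_5_3}. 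For the remaining term I would invoke Lemma \ref{lemma_pre_5} to replace $P\psi_l^m$ by $\psi_l^m$ up to an error $o(1/\delta_i^2)$, and Taylor-expand $a$ around $\xi_0$; since $U_i^{p-1}\psi_i^j$ concentrates at $\xi_i\to\xi_0$ and $|x-\xi_0|=O(\ep)$ on the relevant scale, the correction $a(x)-a(\xi_0)$ is negligible. This reduces the claim to computing $p\,a(\xi_0)\int_{\rn}U_i^{p-1}\psi_i^j\,\psi_l^m\,dx$.

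For the diagonal case $i=l$ I would rescale by $y=(x-\xi_i)/\delta_i$. Using $U_i^{p-1}(x)=\delta_i^{-2}U_{1,0}^{p-1}(y)$ and $\psi_i^j(x)=\delta_i^{-n/2}Z_j(y)$, where $Z_0,Z_n$ are the fixed generators of the kernel of the linearization of \eqref{limit_eq} lying in $\mhr$, the integral becomes $p\,a(\xi_0)\,\delta_i^{-2}\int_{\rn}U_{1,0}^{p-1}\,Z_j\,Z_m\,dy$. Because $Z_0$ is even and $Z_n$ is odd in the $x_n$ variable, this vanishes for $j\ne m$, while for $j=m$ it equals $a(\xi_0)c_j/\delta_i^2$ with $c_j:=p\int_{\rn}U_{1,0}^{p-1}Z_j^{2}>0$; this is the asserted formula for $i=l$.

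The delicate case is $i<l$, where the two bubbles live at very different scales: \eqref{conc_para} gives $\delta_l/\delta_i\to0$, while from \eqref{conc_pt} the centers satisfy $|\xi_i-\xi_l|=O(\delta_i)$. Here I would integrate by parts so that the more concentrated factor $U_l^{p-1}\psi_l^m$ appears, rescale by the finer parameter $\delta_l$, and split $\rn$ into $B\big(\xi_l,\sqrt{\delta_i\delta_l}\big)$ and its complement. On the small ball, $U_i^{p-1}\psi_i^j$ is essentially constant, of size $O\big(\delta_i^{-2-n/2}\big)$ near $\xi_l$, and the mass of $\psi_l^m$ there is controlled by its decay; on the complement one exploits the decay of $\psi_l^m$ away from $\xi_l$ (and of $\psi_i^j$ away from $\xi_i$). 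Inserting the exponents of \eqref{conc_para} shows each piece has the required decay relative to $1/\delta_i^2$, giving the ``otherwise'' estimate. Carrying out this two-scale interaction estimate—and, in parallel, confirming that the $\nabla a$-term is negligible at this precision—is the only genuine difficulty; the rest is bookkeeping with rescaled bubbles.
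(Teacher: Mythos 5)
Your decomposition is exactly the one the paper uses: writing $-\text{div}(a\nabla P\psi_i^j)=p\,a\,U_i^{p-1}\psi_i^j-\nabla a\cdot\nabla P\psi_i^j$, pairing with $P\psi_l^m$, and splitting into a main term $p\intom aU_i^{p-1}\psi_i^j\psi_l^m$, a projection-error term $p\intom aU_i^{p-1}\psi_i^j(P\psi_l^m-\psi_l^m)$, and a $\nabla a$-term; you also control the latter two with the same tools (Lemma~\ref{lemma_pre_5}/\eqref{pre_5_3} for the projection error and Lemma~\ref{lemma_pre_3} for the $\nabla a$-term). So the route is the same as the paper's, which labels these $M_1,M_2,M_3$.

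The one genuine difference is in how $M_1$ is handled: the paper disposes of it in one line by referring to \cite[Lemma A.5]{MP}, whereas you carry out the diagonal computation by rescaling and parity (correct) and sketch the off-diagonal case via a two-scale split around $\xi_l$. Your sketch glosses over a real subtlety there. For $i<l$ and $m=0$ the source $-\Delta P\psi_l^0=pU_l^{p-1}\psi_l^0$ has total mass $\int_{\rn}U_l^{p-1}\psi_l^0=\tfrac{1}{p}\partial_\delta\!\int U_\delta^p\big|_{\delta=\delta_l}\sim \delta_l^{(n-4)/2}$, so the naive bound on $p\intom aU_l^{p-1}\psi_l^0\,\psi_i^j$ (which is what the two-scale split produces, after replacing $\psi_i^j$ by its value near $\xi_l$, of size $O(\delta_i^{-n/2})$) yields $O\big(\delta_i^{-n/2}\delta_l^{(n-4)/2}\big)$. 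Against the target $o(\delta_i^{-2})$ the ratio is $(\delta_l/\delta_i)^{(n-4)/2}$, which vanishes only when $n>4$ and is borderline $O(1)$ at $n=4$; the "inserting the exponents of \eqref{conc_para}" step you gesture at does not close this by itself. At $n=4$ one must either exploit a cancellation beyond the crude size bound, or observe that the weaker estimate $O(\delta_i^{-2})$ already suffices for Proposition~\ref{prop_linear} because after normalizing by $\|P\psi_i^j\|\|P\psi_l^m\|\sim\delta_i^{-1}\delta_l^{-1}$ the off-diagonal entry is still $O(\delta_l/\delta_i)=o(1)$. Also a small slip: after the reversed integration by parts the factor that is "essentially constant" on $B(\xi_l,\sqrt{\delta_i\delta_l})$ is $\psi_i^j$, not $U_i^{p-1}\psi_i^j$; your quoted size $O(\delta_i^{-2-n/2})$ belongs to the latter. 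The scheme is right, but the off-diagonal bookkeeping needs to be carried out with more care than the paper (which simply outsources it to \cite{MP}) or your sketch makes apparent.
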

\begin{proof}
By \eqref{eq_of_psi} we get
\[\la P\psi_i^j, P\psi_l^m \ra = p \intom a U_i^{p-1} \psi_i^j \psi_l^m + p \intom a U_i^{p-1} \psi_i^j \big(P \psi_l^m - \psi_l^m) - \intom (\nabla a \cdot \nabla P\psi_i^j) P\psi_l^m =: M_1 + M_2 + M_3.\]
We will estimate $M_1$, $M_2$ and $M_3$ respectively.

\medskip
To estimate $M_1$, note  that $\delta_{i_1} \ll |\xi_{i_2} - \xi_0|$ for any $i_1, \ i_2 = 1, \cdots, k$.
Then arguing as in the proof of \cite[Lemma A.5]{MP}, we get
\[ M_1 = \left\{ \begin{array}{ll}
a(\xi_0) \dfrac{c_j}{\delta_i^2} + o\(\dfrac{1}{\delta_i^2}\) &\text{if } i = l \text{ and } j = m,\\
o\(\dfrac{1}{\delta_i^2}\) &\text{otherwise,}
\end{array}\right.\]
with positive constants $c_0$ and $c_n$.

Let us estimate $M_2$ when  $j = m = n$.
By \eqref{pre_5_2}, assumption (a1) and Lemma \ref{lemma_pre_1}, we deduce that
\[
M_2 = -\alpha_n \delta_l^{n-2 \over 2} p \int_{B(\xi_i, \rho \ep)} a U_i^{p-1} \psi_i^n (\partial_{\xi,n}H)(\cdot, \xi_l) + O\({\ep^{n+1 \over n-2} \over \delta_i^2}\) = o\(\dfrac{1}{\delta_i^2}\),
\]
where $\rho > 0$ is chosen sufficiently small, since
by  \eqref{conc_para}, assumption (a1), Lemma \ref{lemma_pre_1} and \eqref{pre_5_2} we get
\begin{align*}
\left| \delta_l^{n-2 \over 2} \int_{B(\xi_i, \rho \ep)} a U_i^{p-1} \psi_i^n (\partial_{\xi,n}H)(\cdot, \xi_l)\right|
& \le C\delta_l^{n-2 \over 2} \int_{B(\xi_i, \rho \ep)} {\delta_i^{n+2 \over 2}|x-\xi_i| \over (\delta_i^2 + |x-\xi_i|^2)^{n+4 \over 2}} \cdot {1 \over |x - \xi_i^*|^{n-1}} dx\\
& \le C\delta_l^{n-2\over2} \delta_i^{n-4\over2}\int_{B(0, \rho \ep \delta_i^{-1})} {|y| \over (1 + |y|^2)^{n+4 \over 2}}
 {1 \over  \ep  ^{n-1}}  dy\\
& = O\({\ep^{n-1 \over n-2} \over \delta_i^2}\)
\end{align*}
where $\xi_i^*$ is the reflection of $\xi_i$ with respect to $\po$ defined in the previous subsection and $C>0$ is some constant.
The cases when either $j$ or $m$ is 0  can be carried out in a similar way  using \eqref{pre_5_1}.

Finally, $M_3$ is estimated using Lemma \ref{lemma_pre_3}, which yields to  $M_3 = o\(1/\delta_i^2\).$

This concludes the proof.
\end{proof}

\medskip
Finally, we need
\begin{lemma}\label{lemma_pre_6}
For $i = 1, \cdots, k$, and $j = 0,\ n$, there hold
\begin{align*}
\big\|\pr P\psi_i^j\big\| &= \left\{ \begin{array}{ll}
0 \quad &\text{if } r = d_l \ (l = 1, \cdots, k),\ s_l \ (l = 1, \cdots, k-1),\ l \ne i, \\
O\(\delta_l^{-1}\) \quad &\text{if } r = d_i \text{ or } s_i,
\end{array}\right.
\intertext{and}
\big\|\pt P\psi_i^j\big\| &= O\(\ep \delta_l^{-2}\).
\end{align*}
\end{lemma}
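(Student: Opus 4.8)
The plan is to exploit the simple fact that, by \eqref{conc_para} and \eqref{conc_pt} (recall $\xi_0 = 0$), the function $P\psi_i^j = P\psi_{\delta_i,\xi_i}^j$ depends on $(\md,\mt)$ only through $\delta_i = \ep^{(n-1+2(i-1))/(n-2)} d_i$ and $\xi_i = (\ep t + \delta_i s_i)\,\nu(\xi_0)$, hence only through $d_i$, $s_i$ and $t$. In particular, for $l \ne i$ the quantities $\delta_l, \xi_l$ are independent of $d_i$ and $s_i$, so $\partial_{d_i} P\psi_l^j = \partial_{s_i} P\psi_l^j = 0$; this gives the vanishing part of the statement.

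For the nontrivial estimates I would first record a scaling identity. Writing $U_{\delta,\xi}(x) = \delta^{-(n-2)/2}\Phi((x-\xi)/\delta)$ with $\Phi(y) = \alpha_n (1+|y|^2)^{-(n-2)/2}$, a direct differentiation shows that, for every second-order derivative $\partial^\beta$ in the variables $(\delta,\xi_n)$, one has $\partial^\beta U_{\delta,\xi}(x) = \delta^{-(n+2)/2}G_\beta((x-\xi)/\delta)$ for a fixed function $G_\beta$, independent of $\delta$ and $\xi$, which is smooth near the origin and satisfies $G_\beta(y) = O(|y|^{2-n})$ as $|y|\to\infty$; hence $G_\beta \in D^{1,2}(\rn)$ since $n \ge 3$. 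Changing variables then gives $\|\partial^\beta U_{\delta,\xi}\|_{D^{1,2}(\rn)} = \|G_\beta\|_{D^{1,2}(\rn)}\,\delta^{-2} = O(\delta^{-2})$. Since $\psi^0 = \partial_\delta U$ and $\psi^n = \partial_{\xi_n}U$, each of $\partial_\delta \psi_i^0$, $\partial_{\xi_n}\psi_i^0$, $\partial_\delta\psi_i^n$, $\partial_{\xi_n}\psi_i^n$ has $D^{1,2}(\rn)$-norm $O(\delta_i^{-2})$.

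Next I would apply the chain rule in the parameters. From \eqref{conc_para} and \eqref{conc_pt} one has $\partial_{d_i}\delta_i = \delta_i/d_i$, $\partial_{d_i}(\xi_i)_n = (\delta_i/d_i)\,s_i$, $\partial_{s_i}(\xi_i)_n = \delta_i$ and $\partial_t(\xi_i)_n = \ep$, all other partials of $(\delta_i,\xi_i)$ in the parameters being zero. Hence each of $\partial_{d_i}\psi_i^j$, $\partial_{s_i}\psi_i^j$, $\partial_t\psi_i^j$ is a linear combination, with coefficients controlled by $d_i$ and $s_i$ (hence $O(1)$ on a compact subset $\Lambda_0$ of $\Lambda$), of the second-order derivatives above multiplied by the corresponding $\partial_r(\delta_i,(\xi_i)_n)$, and therefore
\[\big\|\partial_{d_i}\psi_i^j\big\|_{D^{1,2}(\rn)} = O\(\delta_i^{-1}\), \quad \big\|\partial_{s_i}\psi_i^j\big\|_{D^{1,2}(\rn)} = O\(\delta_i^{-1}\), \quad \big\|\partial_t\psi_i^j\big\|_{D^{1,2}(\rn)} = O\(\ep\delta_i^{-2}\).\]

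Finally, since $P$ is linear and, by \eqref{proj}, $PW$ is the $D^{1,2}(\Omega)$-orthogonal projection of $W$ onto $\ho$ — so that $P$ commutes with differentiation in the parameters and $\|PW\|_{D^{1,2}(\Omega)} \le \|W\|_{D^{1,2}(\rn)}$ — and since by (a1) the norm $\|\cdot\|$ is equivalent to $\|\nabla\cdot\|_{L^2(\Omega)}$, the claimed bounds follow upon applying $P$ to the three estimates above. (For $r = t$ every index $i$ is affected, since $t$ enters each $\xi_i$, which is why only the index-$i$ version is stated.) No genuine obstacle arises: the only points requiring care are the justification that $P$ may be differentiated through in the parameters and that the relevant second derivatives of the bubble lie in $D^{1,2}(\rn)$, and both are settled by the scaling identity in the second paragraph.
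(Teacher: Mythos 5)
Your argument is correct, and it takes a genuinely different (and arguably cleaner) route than the paper. The paper differentiates the defining equation $-\Delta P\psi_i^j = pU_i^{p-1}\psi_i^j$ in the parameter $r$, observes that $\pr P\psi_i^j$ solves a Dirichlet problem with right-hand side $p(\pr U_i^{p-1})\psi_i^j + pU_i^{p-1}(\pr\psi_i^j)$, and then invokes the elliptic estimate $\|u\|_{H^1_0}\le C\|{-\Delta u}\|_{L^{2n/(n+2)}}$ to reduce to $L^{2n/(n+2)}$-bounds of those two products, which must then be computed by scaling. You instead bypass elliptic regularity and Sobolev embedding altogether: you note that $P$ commutes with differentiation in the parameters and that $P$ is a contraction from $D^{1,2}(\mathbb{R}^n)$ into $D^{1,2}(\Omega)$ (a point worth making explicit but easy: testing $\Delta PW=\Delta W$ against $PW$ and Cauchy--Schwarz give $\|\nabla PW\|_{L^2(\Omega)}\le\|\nabla W\|_{L^2(\Omega)}$), so it suffices to bound $\|\pr\psi_i^j\|_{D^{1,2}(\mathbb{R}^n)}$ directly. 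The scaling identity $\partial^\beta U_{\delta,\xi}=\delta^{-(n+2)/2}G_\beta((x-\xi)/\delta)$ for second-order $\partial^\beta$ in $(\delta,\xi_n)$, together with $G_\beta\in D^{1,2}(\mathbb{R}^n)$ (the worst decay is $G_\beta=O(|y|^{2-n})$, hence $|\nabla G_\beta|=O(|y|^{1-n})$, square-integrable for $n\ge3$), gives $\|\partial^\beta U_{\delta,\xi}\|_{D^{1,2}}=O(\delta^{-2})$, and the chain rule with $\partial_{d_i}\delta_i=\delta_i/d_i$, $\partial_{d_i}(\xi_i)_n=s_i\delta_i/d_i$, $\partial_{s_i}(\xi_i)_n=\delta_i$, $\pt(\xi_i)_n=\ep$ produces exactly $O(\delta_i^{-1})$, $O(\delta_i^{-1})$ and $O(\ep\delta_i^{-2})$. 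You also spell out the vanishing case $\pr P\psi_i^j=0$ for $r=d_l,s_l$ with $l\neq i$, which the paper passes over in silence. The only caveat is cosmetic: the lemma as printed has the typo $\delta_l$ where $\delta_i$ is meant in the nontrivial cases, and your statement correctly carries $\delta_i$; this matches how the lemma is invoked later (e.g. in the proof of Lemma \ref{lemma_C^1_3}).
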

\begin{proof}
For $r = d_1, \cdots, d_k, t, s_1, \cdots, s_{k-1}$,
\[- \Delta \big(\pr P\psi_i^j\big) = p \big(\pr U_i^{p-1}\big) \psi_i^j + p U_i^{p-1} \big(\pr \psi_i^j\big) \quad \text{in } \Omega, \quad \pr P\psi_i^j = 0 \quad \text{on } \po.\]
Therefore
\[\big\|\pr P\psi_i^j\big\| \le C \left\{\big\|\big(\pr U_i^{p-1}\big) \psi_i^j\big\|_{L^{2n \over n+2}(\Omega)} + \big\|U_i^{p-1} \big(\pr \psi_i^j\big)\big\|_{L^{2n \over n+2}(\Omega)}\right\} \]
for some $C > 0$. Now estimate the right-hand side.
\end{proof}

\subsection{Application of Young's inequality}\label{appendix_young}
In this subsection, we gather estimations which can be obtained by Young's inequality. We again denote $p = (n+2)/(n-2)$.

\begin{lemma}\label{lemma_pre_3}
Assume that $i, l = 1, \cdots, k$ and $j,\ m = 0,\ n$. Then we have
\begin{equation}\label{pre_3_1}
\intom |\nabla PU_i| PU_l = o(\ep)
\end{equation}
and
\[ \intom |\nabla PU_i| P\psi_l^m = o\({\ep \over \delta_l}\) \quad \text{and} \quad \intom \big|\nabla P\psi_i^j\big| P\psi_l^m = o\({1 \over \delta_i^2}\).\]
\end{lemma}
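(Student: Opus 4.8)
The plan is to derive all three estimates from a single application of the H\"older inequality, using three elementary facts about the projected bubbles (throughout, $p=(n+2)/(n-2)$): (i) $\|\nabla PU_i\|_{L^2(\Omega)}=O(1)$, since $\|\nabla PU_i\|_{L^2(\Omega)}^2=\intom U_i^p\,PU_i\le\intom U_i^{p+1}\le\int_{\rn}U_i^{p+1}$, a constant independent of $\delta_i$; (ii) $\|\nabla P\psi_i^j\|_{L^2(\Omega)}=O(\delta_i^{-1})$, because $\psi_i^j-P\psi_i^j$ is harmonic in $\Omega$, so $\intom\nabla(\psi_i^j-P\psi_i^j)\cdot\nabla P\psi_i^j=0$ and hence $\|\nabla P\psi_i^j\|_{L^2(\Omega)}\le\|\nabla\psi_i^j\|_{L^2(\Omega)}\le\|\nabla\psi_i^j\|_{L^2(\rn)}=C\delta_i^{-1}$, the last identity following from the scaling $\psi_i^j(x)=\delta_i^{-n/2}\psi_{1,0}^j\!\((x-\xi_i)/\delta_i\)$ together with $\psi_{1,0}^j\in D^{1,2}(\rn)$; and (iii) $\|PU_l\|_{L^2(\Omega)}=o(\ep)$ and $\|P\psi_l^m\|_{L^2(\Omega)}=O(1)$ (with a harmless factor $\sqrt{|\log\ep|}$ when $n=4$, $m=0$). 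Granting (i)--(iii), H\"older's inequality yields at once
\[\intom|\nabla PU_i|\,PU_l\le\|\nabla PU_i\|_{L^2(\Omega)}\|PU_l\|_{L^2(\Omega)}=O(1)\cdot o(\ep)=o(\ep),\]
\[\intom|\nabla PU_i|\,|P\psi_l^m|\le\|\nabla PU_i\|_{L^2(\Omega)}\|P\psi_l^m\|_{L^2(\Omega)}=O(1)=o\!\(\ep/\delta_l\),\]
since $\ep/\delta_l\ge\ep/\delta_1=\ep^{-1/(n-2)}d_1^{-1}\to\infty$ by \eqref{conc_para}, and
\[\intom|\nabla P\psi_i^j|\,|P\psi_l^m|\le\|\nabla P\psi_i^j\|_{L^2(\Omega)}\|P\psi_l^m\|_{L^2(\Omega)}=O\!\(\delta_i^{-1}\)=o\!\(\delta_i^{-2}\),\]
which are precisely the three asserted bounds.

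The estimate $\|PU_l\|_{L^2(\Omega)}=o(\ep)$ in (iii) is routine: by the maximum principle $0\le PU_l\le U_l$, and a change of variables gives $\|U_l\|_{L^2(\Omega)}=O(\delta_l)$ for $n\ge5$ and $O(\delta_l\sqrt{|\log\delta_l|})$ for $n=4$; in either case this is $o(\ep)$ because $\delta_l\le\delta_1=\ep^{(n-1)/(n-2)}d_1$ and $(n-1)/(n-2)>1$.

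The one step that needs genuine care -- and the main obstacle -- is $\|P\psi_l^m\|_{L^2(\Omega)}=O(1)$: because $\psi_l^m$ changes sign, $P$ is not an $L^2$-contraction, so one cannot simply dominate $\|P\psi_l^m\|_{L^2(\Omega)}$ by $\|\psi_l^m\|_{L^2(\Omega)}$. Here I would invoke Lemma \ref{lemma_pre_5}, which writes $P\psi_l^m=\psi_l^m-(\text{harmonic correction})+(\text{lower order})$, with the harmonic correction bounded by $C\delta_l^{(n-2)/2}|\nabla_\xi H(\cdot,\xi_l)|$ when $m=n$ (and by $C\delta_l^{(n-4)/2}|H(\cdot,\xi_l)|$ when $m=0$). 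By Lemma \ref{lemma_pre_1}, $|\nabla_\xi H(x,\xi_l)|\le C|x-\xi_l^*|^{1-n}$, and since $|x-\xi_l^*|\ge c\ep$ for $x\in\Omega$ (the reflected point $\xi_l^*$ sits at distance $\sim\ep t$ from $\po$), one gets $\delta_l^{(n-2)/2}\|\nabla_\xi H(\cdot,\xi_l)\|_{L^2(\Omega)}=O\!\((\delta_l/\ep)^{(n-2)/2}\)=o(1)$, using $\delta_l/\ep=O(\ep^{1/(n-2)})\to0$; meanwhile $\|\psi_l^m\|_{L^2(\Omega)}\le\|\psi_l^m\|_{L^2(\rn)}=O(1)$ by scale invariance, the only borderline case being $n=4$, $m=0$, where $\psi_{1,0}^0(z)\sim|z|^{2-n}$ is $L^2$-critical at infinity and produces the innocuous $\sqrt{|\log\ep|}$, which is absorbed since $\ep/\delta_l$ and $\delta_i^{-2}$ beat any power of $|\log\ep|$.

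An alternative, closer to the heading of this subsection, is to keep the Green's-function representations $|\nabla PU_i(x)|\le C\int_\Omega|x-y|^{1-n}U_i^p(y)\,dy$ and $|\nabla P\psi_i^j(x)|\le C\delta_i^{-1}\int_\Omega|x-y|^{1-n}U_i^p(y)\,dy$ (using $U_i^{p-1}|\psi_i^j|\le C\delta_i^{-1}U_i^p$), move the kernel onto the second factor by Fubini, and control the resulting Riesz-type potential by Young's convolution inequality, tracking powers of $\delta_i,\delta_l,\ep$ through $\|U_i^p\|_{L^t(\Omega)}\sim\delta_i^{n/t-(n+2)/2}$ and $\|U_l\|_{L^q(\Omega)}\sim\delta_l^{n/q-(n-2)/2}$. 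This works as well, but the exponent bookkeeping becomes fussy in the ``nested bubble'' cases $i\ne l$, where $\delta_i$ and $\delta_l$ differ by a power of $\ep$; I would therefore prefer the H\"older/Cauchy--Schwarz argument above. In every case one concludes by inserting \eqref{conc_para} and using $n\ge4$.
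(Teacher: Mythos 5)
Your Cauchy--Schwarz strategy is a genuinely different route from the paper's proof, which instead represents $\nabla PU_i$ (and $\nabla P\psi_i^j$) as a Riesz potential against $U_i^p$, moves the kernel to the second factor by Fubini, and applies Young's convolution inequality with a trio of exponents $(q,r,s)$, $1/q+1/r+1/s=2$, tuned by a small free parameter $\sigma$. Your facts (i) and (ii) are correct as stated, and the Cauchy--Schwarz version is certainly more transparent to organize, but the write-up has a concrete gap and one limitation.

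The gap is in the step $\|P\psi_l^m\|_{L^2(\Omega)}=O(1)$. After invoking Lemma~\ref{lemma_pre_5} you bound $\psi_l^m$ and the harmonic correction, but you are silent on the remainder. That remainder is bounded in $L^\infty(\Omega)$ by $O(\delta_l^{n/2}/\ep^n)$, which by \eqref{conc_para} satisfies $\delta_l^{n/2}/\ep^n\le\delta_1^{n/2}/\ep^n=O\big(\ep^{n(3-n)/(2(n-2))}\big)$, and the exponent is \emph{negative} for every $n\ge4$ (e.g.\ $O(\ep^{-1})$ for $n=4$). So the crude bound $\|{\rm err}\|_{L^2(\Omega)}\le\|{\rm err}\|_{L^\infty(\Omega)}|\Omega|^{1/2}$ blows up, and the decomposition as stated does not close. (The remainder is in fact $o(1)$ in $L^2$, but only because the harmonic error decays away from $\xi_l^*$ --- one may verify the barrier $|{\rm err}(x)|\le C\delta_l^{n/2}\ep^{-2}|x-\xi_l^*|^{2-n}$, from which $\|{\rm err}\|_{L^2(\Omega)}=O((\delta_l/\ep)^{n/2})=o(1)$; nothing of this appears in your argument.) A cleaner way to get what your chain actually needs is to bypass Lemma~\ref{lemma_pre_5} entirely: since $-\Delta P\psi_l^j=pU_l^{p-1}\psi_l^j$ in $\Omega$, $P\psi_l^j=0$ on $\po$, and $|\psi_l^j|\le C\delta_l^{-1}U_l$ pointwise, the maximum principle applied to $C'\delta_l^{-1}U_l\mp P\psi_l^j$ yields $|P\psi_l^j|\le C'\delta_l^{-1}U_l$ in $\Omega$, hence $\|P\psi_l^j\|_{L^2(\Omega)}\le C'\delta_l^{-1}\|U_l\|_{L^2(\Omega)}=O(1)$ for $n\ge5$ and $O(\sqrt{|\log\ep|})$ for $n=4$, exactly what you use.

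The limitation: the paper's Young-inequality route works for all $n\ge3$, which matters because Remark~\ref{rmk_pre} asserts that this lemma survives for $n=3$. Your Cauchy--Schwarz bound is borderline there: $\|U_l\|_{L^2(\Omega)}=O(\sqrt{\delta_l})$ in $n=3$, so $\|PU_l\|_{L^2}=O(\sqrt{\delta_1})=O(\ep)$ only, which fails to give the $o(\ep)$ in \eqref{pre_3_1}, and analogous losses appear in the other two estimates. This is harmless for Theorem~\ref{thm_main} (which assumes $n\ge4$), but it does mean your proof establishes a statement slightly weaker than the one the paper needs in the Remark.
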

\begin{proof}
The proof is essentially given in the proof of \cite[Lemma A.2]{ACP}. For the sake of reader's convenience, we reprove \eqref{pre_3_1}.
Observe that Lemma \ref{lemma_pre_1} tells us that
\[|\nabla PU_i(x)| = \left|\intom \nabla_x G(x,y)U_i^p(y)dy\right| \le C \intom {1 \over |x-y|^{n-1}}U_i^p(y)dy\]
for some constant $C > 0$. Hence, by Young's inequality \cite[Theorem 4.2]{LL},
\[\intom |\nabla PU_i(x)| PU_l(x) dx \le C \intom \intom U_l(x) {1 \over |x-y|^{n-1}} U_i^p(y) dy dx
\le C \|U_l\|_{L^q(\Omega)}\|f\|_{L^r(B(0,M))}\big\|U_i^p\big\|_{L^s(\Omega)}
\]
for any $q, r, s \ge 1$ satisfy $1/q + 1/r + 1/s = 2$, where $f(x) = |x|^{1-n}$ and $M$ is the diameter of $\Omega$.

Fixing $\sigma > 0$ small enough, we choose
\[q = {n \over 1 - (n-1)\sigma} > {n \over n-2} ,\quad r = {n \over (n-1)(1+\sigma)},\quad s = 1.\]
Since
\[\|U_l\|_{L^q(\Omega)} = O\(\delta_l^{{n \over q} - {n-2 \over 2}}\) \quad \text{for } q > {n \over n-2}, \quad \big\|U_i^p\big\|_{L^s(\Omega)} = O\(\delta_i^{{n \over s} - {n+2 \over 2}}\) \quad \text{for }s \ge 1\]
and
$\|f\|_{L^r(B(0,M))} = O(1)$ for $r \in [1, n/(n-1))$, it then follows that
\[\|U_l\|_{L^q(\Omega)}\|f\|_{L^r(B(0,M))}\big\|U_i^p\big\|_{L^s(\Omega)} = O\(\delta_1^{n({1 \over q} + {1 \over s} -1)}\) = O\(\delta_1^{1 - (n-1)\sigma}\) = O\(\ep ^{{n-1 \over n-2} \cdot (1 - (n-1)\sigma)}\) = o(\ep),\]
which gives \eqref{pre_3_1}.
\end{proof}

\medskip
\begin{lemma}\label{lemma_pre_7}
For $i = 1, \cdots, k$ and $j = 0,\ n$,
\[\|\nabla PU_i\|_{L^{2n \over n+2}(\Omega)} = o(\ep) \quad \text{and} \quad \big\|\nabla \pt P\psi_i^j\big\|_{L^{2n \over n+2}(\Omega)} = O\(\ep^{1 - \sigma} \delta_i^{-1}\)\quad \text{if} \quad n\ge4 .\]
\end{lemma}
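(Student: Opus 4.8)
Both estimates will be deduced from the Green's representation formula, from the bound $|\nabla_x G(x,y)|\le C|x-y|^{1-n}$ for $x,y\in\Omega$ (a consequence of Lemma \ref{lemma_pre_1} together with the symmetry $H(x,y)=H(y,x)$), and from Young's convolution inequality, exactly in the spirit of the proof of Lemma \ref{lemma_pre_3}. Set $f(z):=|z|^{1-n}\mathbf{1}_{\{|z|\le M\}}$, where $M$ is the diameter of $\Omega$, so that $f\in L^r(\rn)$ for every $r\in[1,n/(n-1))$, and recall from \eqref{conc_para} that $\delta_i=O(\ep^{(n-1)/(n-2)})$ uniformly on compact subsets of $\Lambda$, with exponent $(n-1)/(n-2)>1$.

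For the first bound, $-\Delta PU_i=U_i^p$ in $\Omega$ and $PU_i=0$ on $\po$ give $PU_i=\intom G(\cdot,y)U_i^p(y)\,dy$, hence $|\nabla PU_i|\le C\,(f*U_i^p)$. For $s\ge1$ and $r<n/(n-1)$ with $\frac1r+\frac1s=1+\frac{n+2}{2n}=\frac{3n+2}{2n}$, Young's inequality together with the computation of $\|U_i^p\|_{L^s(\Omega)}$ already used in Lemma \ref{lemma_pre_3} gives
\[\|\nabla PU_i\|_{L^{2n/(n+2)}(\Omega)}\le C\|f\|_{L^r}\|U_i^p\|_{L^s(\Omega)}=O\big(\delta_i^{\,n/s-(n+2)/2}\big).\]
The condition $r<n/(n-1)$ is equivalent to $s>2n/(n+4)$, which is compatible with $s\ge1$ precisely because $n\ge4$; letting $s$ decrease toward $2n/(n+4)$ drives the exponent $\beta:=n/s-(n+2)/2$ up to $1$ from below. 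Choosing $\beta\in\big(\frac{n-2}{n-1},1\big)$ we obtain $\|\nabla PU_i\|_{L^{2n/(n+2)}(\Omega)}=O(\delta_i^{\beta})=O\big(\ep^{\beta(n-1)/(n-2)}\big)=o(\ep)$.

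For the second bound, note by \eqref{conc_pt} that $\delta_i$ is independent of $t$ and $\partial_t\xi_i=\ep\,\nu(\xi_0)$, so $\pt$ acts on $U_i$, $\psi_i^j$, and $U_i^{p-1}\psi_i^j$ as $-\ep\,\pxn$. Differentiating the identity $-\Delta P\psi_i^j=p\,U_i^{p-1}\psi_i^j$ (which follows from \eqref{eq_of_psi} and \eqref{proj}) in $t$ and using $\pt P\psi_i^j=0$ on $\po$, we see that $\pt P\psi_i^j$ solves $-\Delta(\pt P\psi_i^j)=-p\,\ep\,\pxn(U_i^{p-1}\psi_i^j)$ in $\Omega$ with zero boundary values, so $|\nabla(\pt P\psi_i^j)|\le C\ep\,(f*g_i)$ with $g_i:=|\pxn(U_i^{p-1}\psi_i^j)|$. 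A short computation from \eqref{instanton}, \eqref{pdxn} and \eqref{pdxj} (using $|U_i^{p-1}\psi_i^0|\le C\delta_i^{n/2}(\delta_i^2+|x-\xi_i|^2)^{-(n+2)/2}$, $|U_i^{p-1}\psi_i^n|\le C\delta_i^{(n+2)/2}(\delta_i^2+|x-\xi_i|^2)^{-(n+3)/2}$, and that an $x_n$-derivative costs a factor $(\delta_i^2+|x-\xi_i|^2)^{-1/2}\le\delta_i^{-1}$) shows that $g_i(x)\le C\,\delta_i^{n/2}(\delta_i^2+|x-\xi_i|^2)^{-(n+3)/2}$ for both $j=0,n$, hence $\|g_i\|_{L^s(\Omega)}=O\big(\delta_i^{\,n/s-(n+6)/2}\big)=O(\delta_i^{\beta-2})$ with the same $\beta$. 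Therefore $\|\nabla(\pt P\psi_i^j)\|_{L^{2n/(n+2)}(\Omega)}=O(\ep\,\delta_i^{\beta-2})$; writing $\delta_i=d_i\ep^{\gamma_i}$ with $\gamma_i:=\frac{n-1+2(i-1)}{n-2}\le\gamma_k$ we get $\ep\,\delta_i^{\beta-1}=d_i^{\beta-1}\ep^{1+\gamma_i(\beta-1)}$, so given $\sigma>0$ any $\beta\in[\,1-\sigma/\gamma_k,1)$ yields $\ep\,\delta_i^{\beta-1}=O(\ep^{1-\sigma})$ and hence $\|\nabla(\pt P\psi_i^j)\|_{L^{2n/(n+2)}(\Omega)}=O(\ep^{1-\sigma}\delta_i^{-1})$; for $\sigma$ small such $\beta$ also lies above $\frac{n-2}{n-1}$, so a single $s$ (hence $\beta$) can be used for both bounds.

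The only genuine work is the pointwise estimate on $\pxn(U_i^{p-1}\psi_i^j)$ and the bookkeeping needed to check that the several constraints on $s$ are simultaneously satisfiable; this is precisely where the hypothesis $n\ge4$ enters, since at $n=4$ the endpoint $s=2n/(n+4)=1$ must be excluded and for $n=3$ the argument degenerates.
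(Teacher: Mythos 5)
Your argument is correct and follows the paper's own strategy: Green's representation $\nabla PU_i=\int_\Omega\nabla_x G(\cdot,y)U_i^p(y)\,dy$, the pointwise kernel bound from Lemma~\ref{lemma_pre_1}, and Young's convolution inequality. The paper only writes out the first estimate --- and phrases the Young step as a three-function H\"older/Young bound $\int h\,(f*g)\le\|h\|_q\|f\|_r\|g\|_s$ after peeling off $|\nabla PU_i|^{\tilde p-1}$, whereas you use the two-function form $\|f*g\|_{\tilde p}\le\|f\|_r\|g\|_s$ directly; the exponent arithmetic $1/r+1/s=(3n+2)/(2n)$ and the resulting bound $O(\delta_i^{\,n/s-(n+2)/2})$ are identical. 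For the second estimate the paper merely says ``can be checked similarly''; you supply the missing details, in particular the observation that $\delta_i$ is $t$-independent so $\pt$ acts on the profiles as $-\ep\,\pxn$, the differentiated equation $-\Delta(\pt P\psi_i^j)=-p\ep\,\pxn(U_i^{p-1}\psi_i^j)$, and the pointwise estimate $|\pxn(U_i^{p-1}\psi_i^j)|\le C\,\delta_i^{n/2}(\delta_i^2+|x-\xi_i|^2)^{-(n+3)/2}$, which then feeds into the same Young step. That is a genuine (and welcome) fleshing-out of a step the paper leaves implicit.

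One sentence in your write-up is loose, though not wrong in substance: the constraints $s\ge1$ and $s>2n/(n+4)$ are in fact compatible for every $n\ge3$. What actually fails at $n=3$ is that $s\ge1$ caps $\beta=n/s-(n+2)/2$ at $(n-2)/2$, which for $n=3$ equals $1/2=(n-2)/(n-1)$; hence $\beta$ cannot be taken strictly above $(n-2)/(n-1)$, and one only obtains $O(\ep)$ rather than $o(\ep)$ for the first norm and $O(\ep\,\delta_i^{-3/2})$ for the second --- precisely the degeneracy recorded in Remark~\ref{rmk_pre}. For $n\ge4$ one has $(n-2)/2\ge1$, so the cap from $s\ge1$ does not bite and $\beta$ can be pushed arbitrarily close to $1$ from below, as you use. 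This is a matter of phrasing, not a gap.
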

\begin{proof}
We take into account only $\|\nabla PU_i\|_{L^{2n \over n+2}(\Omega)}$. The other thing can be checked similarly.

Denote $\tp = {2n \over n+2}$ and as the proof of the previous lemma, we compute
\begin{align*}
\|\nabla PU_i\|^{\tp}_{L^{\tp}(\Omega)} &\le C \intom |\nabla PU_i(x)|^{\tp} dx
\le C \intom \intom |\nabla PU_i(x)|^{\tp-1}{1 \over |x-y|^{n-1}}U_i^p(y)dydx\\
& \le C \left\||\nabla PU_i|^{\tp-1}\right\|_{L^{\tp \over \tp-1}(\Omega)} \|f\|_{L^r(B(0,M))} \big\|U_i^p\big\|_{L^s(\Omega)}
\end{align*}
where $f(x) = |x|^{1-n}$ and $M$ is the diameter of $\Omega$ again.
Hence, if $n\ge4$ the choice
\[ r = {n \over (n-1)(1+\sigma)} \quad \text{and} \quad s = {2n \over n+4 - 2(n-1)\sigma} > 1\]
for any sufficiently small $\sigma > 0$ gives
\[\|\nabla PU_i\|_{L^{\tp}(\Omega)} \le C \big\|U_i^p\big\|_{L^s(\Omega)} = O\(\delta_i^{1 - (n-1)\sigma}\) = o(\ep).\]
 \end{proof}

\begin{rmk}\label{rmk_pre}
We point out that the assumption $n \ge 4$ is used in a crucial way in the proof of estimate \eqref{C^1_3.5}.
All the   results necessary to  the proof of the main theorem remain true for $n = 3$ except Lemma \ref{lemma_C^1_4}. In particular,
the proofs of  Proposition \ref{prop_cont} and Lemma \ref{lemma_C^1_2},
  can be slightly modified  when for $n = 3$. Indeed, in the proof of Lemma \ref{lemma_pre_7}, we choose $r = 6/5$ and $s = 1$ to get
$\|\nabla PU_i\|_{L^{6/5}(\Omega)} = O\(\delta_i^{1 \over 2}\) = O(\ep).$  This implies $\|R_4\| = O(\ep)$ in the proof of Proposition \ref{prop_cont}, which is sufficient to conclude the validity of the proposition.
Moreover,
\[\left| \intom \nabla a \cdot \nabla \(\delta_iP\psi_i^j\) \phi\right| \le
C \delta_i \big\|U_i^{p-1} \psi_i^j\big\|_{L^1(\Omega)} \cdot \|\phi\|_{L^6(\Omega)} = O\(\delta_i^{1 \over 2}\) \cdot o(\sqrt{\ep}) = o(\ep),\]
so \eqref{s5_2} holds to be true and the conclusion of Lemma \ref{lemma_C^1_2} is true.

However, when $n=3$ the argument of Lemma \ref{lemma_pre_7} only guarantees $\big\|\nabla \pt P\psi_i^j\big\|_{L^{6 \over 5}(\Omega)} = O\(\delta_i^{-{3 \over 2}}\)$
which does not allow to get the estimate \eqref{C^1_3.5}, since
\[|c_{ij}| \cdot \big\|\nabla \pt P\psi_i^j\big\|_{L^{6 \over 5}(\Omega)} \cdot \|\phi\| = o\(\delta_i^{-{1 \over 2}} \ep^2\) \ne o(\ep) \quad \text{for } i \ge 2.\]
\end{rmk}

\subsection{Differentiation under the integral sign}
Here we recall some useful operations from elementary calculus. (See \cite[Appendix C]{Ev}.)
\begin{lemma}\label{lemma_pre_00}
Let $f: \rn \to \mathbb{R}$ be continuous and integrable. Then
\[{d \over dr} \int_{B(x_0, r)} f(x) dx = \int_{\partial B(x_0, r)} f dS\]
for any $x_0 \in \rn$ and $r > 0$.
\end{lemma}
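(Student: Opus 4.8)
The plan is to peel off the radial variable and reduce everything to the one–dimensional fundamental theorem of calculus. First I would pass to polar coordinates centered at $x_0$: writing $x = x_0 + s\omega$ with $s \ge 0$ and $\omega$ in the unit sphere, the change of variables formula (equivalently, the co-area formula applied to the Lipschitz function $x \mapsto |x - x_0|$) gives
\[
\int_{B(x_0, r)} f(x)\, dx = \int_0^r \left( \int_{\partial B(x_0, s)} f\, dS \right) ds =: \int_0^r g(s)\, ds ,
\]
where $g(s) := \int_{\partial B(x_0, s)} f\, dS = s^{n-1} \int_{S^{n-1}} f(x_0 + s\omega)\, dS(\omega)$. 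Every integral here is finite because $f$ is assumed integrable, so the decomposition is legitimate.

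Next I would verify that the spherical average $g$ is continuous on $(0, +\infty)$ (and continuous at $s=0$ with value $0$). Since $f$ is continuous on $\rn$, it is bounded and uniformly continuous on any compact neighborhood of the sphere $\partial B(x_0, r)$; hence $\omega \mapsto f(x_0 + s\omega)$ converges uniformly on $S^{n-1}$ as $s \to r$, and multiplying by the continuous factor $s^{n-1}$ shows that $g$ is continuous at $r$. As $r > 0$ was arbitrary, $g \in C((0,+\infty))$.

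Finally, applying the fundamental theorem of calculus to the continuous integrand $g$ yields
\[
\frac{d}{dr} \int_{B(x_0, r)} f(x)\, dx = \frac{d}{dr} \int_0^r g(s)\, ds = g(r) = \int_{\partial B(x_0, r)} f\, dS ,
\]
which is the assertion of the lemma. The only points that need even a word of justification are the passage to polar coordinates and the continuity of the spherical average; both are entirely standard (see, e.g., \cite[Appendix C]{Ev}), so there is no real obstacle here. The statement is recorded separately only because it is invoked repeatedly when differentiating integrals of the bubbles $U_{\delta,\xi}$ under the integral sign in the $C^0$- and $C^1$-expansions of the reduced energy.
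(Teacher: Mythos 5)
Your proof is correct, and it is precisely the standard argument (polar/co-area decomposition, continuity of the spherical average, fundamental theorem of calculus) that Evans gives in Appendix~C, which is exactly what the paper cites in lieu of writing out a proof. Nothing to add beyond the minor observation that continuity of $f$ alone already gives integrability on any ball, so the global integrability hypothesis is not actually needed for the polar-coordinate step.
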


\begin{lemma}\label{lemma_pre_01}
Suppose $\{U(t)\}_{t \in \mathbb{R}}$ is a family of smooth bounded domains in $\rn$ which depends on $t$ smoothly.
Denote $\mathbf{v}$ as the velocity of the moving boundary $\partial U(t)$ and $\nu$ as the inner unit normal vector to $\partial U(t)$. If $f: \rn \to \mathbb{R}$ is smooth, then
\[{d \over dt} \int_{U(t)} f(x) dx = - \int_{\partial U(t)} f \mathbf{v} \cdot \nu dS.\]
\end{lemma}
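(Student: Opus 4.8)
The plan is to derive the statement from the change-of-variables formula and the divergence theorem: it is the Reynolds transport theorem specialized to a time-independent integrand, and for the purposes of the paper it would in fact suffice to refer to \cite[Appendix~C]{Ev}. I shall nonetheless outline a self-contained argument.

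Fix $t_0$. Since $\{U(t)\}$ depends smoothly on $t$, there is a smooth one-parameter family of diffeomorphisms $\Phi_t\colon\overline{U(t_0)}\to\overline{U(t)}$, defined for $t$ near $t_0$, with $\Phi_{t_0}=\mathrm{id}$ and with boundary velocity equal to $\mathbf{v}$; concretely one may take the time-$t$ flow of any smooth vector field on $\rn$ whose restriction to the moving boundary $\partial U(t)$ coincides with $\mathbf{v}$. Write $X_t:=(\partial_t\Phi_t)\circ\Phi_t^{-1}$ for the induced velocity field on $U(t)$, so that $X_t=\mathbf{v}$ on $\partial U(t)$, and $J_t:=\det D\Phi_t>0$ for the Jacobian.

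By the change of variables $x=\Phi_t(y)$ we have $\int_{U(t)}f(x)\,dx=\int_{U(t_0)}f(\Phi_t(y))\,J_t(y)\,dy$, where now the domain of integration is fixed and the integrand is smooth in $(t,y)$, so differentiation under the integral sign is legitimate. Using the chain rule together with Jacobi's formula $\partial_t J_t=J_t\,(\mathrm{div}\,X_t)\circ\Phi_t$, and then undoing the change of variables, one obtains
\[
\frac{d}{dt}\int_{U(t)}f\,dx=\int_{U(t)}\bigl(\nabla f\cdot X_t+f\,\mathrm{div}\,X_t\bigr)\,dx=\int_{U(t)}\mathrm{div}(f\,X_t)\,dx .
\]
The divergence theorem turns the right-hand side into $\int_{\partial U(t)}f\,(X_t\cdot\mathbf{n})\,dS$ with $\mathbf{n}$ the outward unit normal; since $\mathbf{n}=-\nu$ and $X_t=\mathbf{v}$ on $\partial U(t)$, this equals $-\int_{\partial U(t)}f\,\mathbf{v}\cdot\nu\,dS$, which is the assertion.

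The only point requiring care is the construction of the $\Phi_t$ and the fact that their boundary velocity is $\mathbf{v}$, which is precisely where the smooth dependence of $U(t)$ on $t$ enters; it is handled in the standard way by parametrizing $\partial U(t)$ locally and extending. One can also bypass diffeomorphisms entirely: $U(t)\,\triangle\,U(t_0)$ is a thin collar about $\partial U(t_0)$ of signed outward thickness $(t-t_0)\,\mathbf{v}\cdot\mathbf{n}+o(t-t_0)$, so the coarea formula gives $\int_{U(t)}f-\int_{U(t_0)}f=(t-t_0)\int_{\partial U(t_0)}f\,(\mathbf{v}\cdot\mathbf{n})\,dS+o(t-t_0)$, and dividing by $t-t_0$ and letting $t\to t_0$ yields the same limit. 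No genuine obstacle arises here; the lemma is elementary and is recorded only because it is used repeatedly in the energy expansions of Sections \ref{sec_expansion_1} and \ref{sec_expansion_2}.
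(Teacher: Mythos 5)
Your proof is correct. In fact the paper does not prove this lemma at all: it is stated as a recalled fact ``from elementary calculus'' with a reference to \cite[Appendix~C]{Ev}, exactly as you observe, so there is no in-paper argument to compare against. Your self-contained derivation is the standard proof of the Reynolds transport theorem with time-independent integrand: pull back to the fixed domain $U(t_0)$ via a smooth isotopy $\Phi_t$, differentiate under the integral sign using Jacobi's formula $\partial_t J_t = J_t\,(\operatorname{div}X_t)\circ\Phi_t$, and apply the divergence theorem; the sign flip comes from $\mathbf{n}=-\nu$. One small point worth making explicit is that ``the velocity $\mathbf{v}$ of the moving boundary'' is only intrinsically defined up to tangential components, so the statement should really be read as involving only the normal speed $\mathbf{v}\cdot\nu$; your argument handles this correctly because only $X_t\cdot\mathbf{n}$ survives in the boundary integral, so the freedom in choosing the extending field $X_t$ is harmless. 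The alternative coarea/collar argument you sketch is also valid and, if anything, closer in spirit to the companion Lemma~\ref{lemma_pre_00} (differentiation of $\int_{B(x_0,r)}f$ in $r$), of which the present lemma is the general-domain analogue.
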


\providecommand{\bysame}{\leavevmode\hbox to3em{\hrulefill}\thinspace}
\providecommand{\MR}{\relax\ifhmode\unskip\space\fi MR }
\providecommand{\MRhref}[2]{%
  \href{http://www.ams.org/mathscinet-getitem?mr=#1}{#2}
}
\providecommand{\href}[2]{#2}

\end{document}